\newtheorem{theorem}{Theorem}[subsection]
\newtheorem{definition}[theorem]{Definition}
\newtheorem{definition-lemma}[theorem]{Definition/Lemma}
\newtheorem{definition-explanation}[theorem]{Definition/Explanation}
\newtheorem{explanation-definition}[theorem]{Explanation/Definition}
\newtheorem{lemma}[theorem]{Lemma}
\newtheorem{lemma-definition}[theorem]{Lemma/Definition}
\newtheorem{proposition}[theorem]{Proposition}
\newtheorem{corollary}[theorem]{Corollary}
\newtheorem{remark}[theorem]{\it Remark}
\newtheorem{example}[theorem]{Example}
\newtheorem{notation}[theorem]{Notation}
\newtheorem{convention}[theorem]{\it Convention}
\numberwithin{equation}{subsection}
\newtheorem{sdefinition-lemma}[stheorem]{Definition/Lemma}
\newtheorem{sdefinition-explanation}[stheorem]{Definition/Explanation}
\newtheorem{sexplanation-definition}[stheorem]{Explanation/Definition}
\newtheorem{slemma-definition}[stheorem]{Lemma/Definition}
\newtheorem{sstheorem}{Theorem}[subsubsection]
\newtheorem{ssdefinition}[sstheorem]{Definition}
\newtheorem{ssdefinition-lemma}[sstheorem]{Definition/Lemma}
\newtheorem{ssdefinition-explanation}[sstheorem]{Definition/Explanation}
\newtheorem{ssexplanation-definition}[sstheorem]{Explanation/Definition}
\newtheorem{sslemma}[sstheorem]{Lemma}
\newtheorem{sslemma-definition}[sstheorem]{Lemma/Definition}
\newtheorem{ssproposition}[sstheorem]{Proposition}
\newtheorem{ssremark}[sstheorem]{\it Remark}
\newtheorem{ssexample}[sstheorem]{Example}
\newcommand{\Arg}{\mbox{\it Arg}\,}
 \newcommand{\Azscriptsize}{\mbox{\scriptsize\it A$\!$z}}
\newcommand{\Br}{\mbox{\it Br}\,}
\newcommand{\Calabi}{\mbox{\it Calabi}\,}
\newcommand{\Cover}{\mbox{\it Cover}\,}
\newcommand{\CP}{{\Bbb C}{\rm P}}
\newcommand{\Dersheaf}{\mbox{\it ${\cal D}$er}\,}
\newcommand{\Diff}{\mbox{\it Diff}\,}
\newcommand{\Endsheaf}{\mbox{\it ${\cal E}\!$nd}\,}
\newcommand{\Ext}{\mbox{\rm Ext}}
\newcommand{\GL}{\mbox{\it GL}}
\newcommand{\Hom}{\mbox{\it Hom}\,}
\newcommand{\Homsheaf}{\mbox{\it ${\cal H}$om}\,}
\newcommand{\Id}{\mbox{\it Id}}
\newcommand{\Imaginary}{\mbox{\it Im}\,}
\newcommand{\Image}{\mbox{\it Im}\,}
\newcommand{\Ker}{\mbox{\it Ker}\,}
 \newcommand{\scriptsizeLag}{\mbox{\scriptsize\it Lag}\,}
\newcommand{\Map}{\mbox{\it Map}\,}
\newcommand{\ModCategory}{\mbox{\it ${\cal M}$\!od}\,}
\newcommand{\MorphismCategory}{\mbox{\it ${\cal M}$\!orphism}\,}
\newcommand{\Quot}{\mbox{\it Quot}}
 \newcommand{\scriptsizeQuot}{\mbox{\scriptsize\it Quot}}
\newcommand{\Real}{\mbox{\it Re}\,}
\newcommand{\Rep}{\mbox{\it Rep}\,}
\newcommand{\SL}{\mbox{\it SL}}
 \newcommand{\scriptsizesLag}{\mbox{\scriptsize\it sLag}\,}
\newcommand{\Space}{\mbox{\it Space}\,}
 \newcommand{\smallSpace}{\mbox{\small\it Space}\,}
\newcommand{\Spec}{\mbox{\it Spec}\,}
 \newcommand{\boldSpec}{\mbox{\it\bf Spec}\,}
 \newcommand{\smallSpec}{\mbox{\small\it Spec}\,}
\newcommand{\Supp}{\mbox{\it Supp}\,}
\newcommand{\Sym}{\mbox{\it Sym}}
 \newcommand{\scriptsizecover}{\mbox{\scriptsize\it cover}\,}
\newcommand{\determinant}{\mbox{\it det}\,}
\newcommand{\dimm}{\mbox{\it dim}\,}
\newcommand{\et}{\mbox{\scriptsize\it \'{e}t}\,}
\newcommand{\length}{\mbox{\it length}\,}
\newcommand{\nc}{\mbox{\scriptsize\it nc}}
\newcommand{\scriptsizenoncommutative}{\mbox{\scriptsize\rm
                                             noncommutative}}
\newcommand{\pr}{\mbox{\it pr}}
\newcommand{\pt}{\mbox{\it pt}}
\newcommand{\rank}{\mbox{\it rank}\,}
\newcommand{\redscriptsize}{\mbox{\scriptsize\rm red}\,}
\newcommand{\reldimm}{\mbox{\it rel.dim}\,}
\newcommand{\trace}{\mbox{\it tr}\,}
\newcommand{\vol}{\mbox{\it vol}\,}
\begin{document}

\enlargethispage{24cm}

\begin{titlepage}

$ $

\vspace{-1.5cm} % Re: -1.5cm for PC; -2.5cm for UT-Math-system

\noindent\hspace{-1cm}
\parbox{6cm}{\small February 2010}\
   \hspace{8cm}\
   \parbox[t]{5cm}{yymm.nnnn [math.SG]\\ D(6): A-brane, rev-1.}

\vspace{2cm}

%title
\centerline{\large\bf
 D-branes and Azumaya noncommutative geometry:}
\vspace{1ex}
\centerline{\large\bf
 From Polchinski to Grothendieck}
% end-title

\bigskip

\vspace{3em}

%authors-'n-addresses
\centerline{\large
  Chien-Hao Liu
  \hspace{1ex} and \hspace{1ex}
  Shing-Tung Yau
}

\vspace{6em}

%abstract%
\begin{quotation}
\centerline{\bf Abstract}

\vspace{0.3cm}

\baselineskip 12pt  %13pt for [12pt] style
{\small
 In this continuation of
  [L-Y3]    (arXiv:0709.1515 [math.AG]),
  [L-L-S-Y] (arXiv:0809.2121 [math.AG]),
  [L-Y4]    (arXiv:0901.0342 [math.AG]),
  [L-Y5]    (arXiv:0907.0268 [math.AG]), and
  [L-Y6]    (arXiv:0909.2291 [math.AG]),
 we give an overview of the posted part of the project and then
  take it as background to introduce
   Azumaya noncommutative $C^{\infty}$-manifold
   and the four aspects of morphisms therefrom to
   a projective complex manifold.
 This gives us then a description of
  supersymmetric D-branes of A-type in a Calabi-Yau manifold
  along the line of the Polchinski-Grothendieck Ansatz.
 The notion of K\"{a}hler differentials and their tensors
  for an Azumaya noncommutative space are introduced.
 Donaldson's picture of
  Lagrangian and special Lagrangian submanifolds
   as selected from the zero-locus of a moment-map
   on a related space of maps
  can be merged into the setting of morphisms
   from Azumaya manifolds with a fundamental module.
 As a pedagogical toy model for illustration,
  we study D-branes of A-type in a Calabi-Yau torus.
 Simple as it is, it reveals already several features of D-branes
  of A-type, including their assembling/disassembling.
 The short-vs.-long string wrapping behavior of
  matrix-strings in the string-theory literature
  can be produced in this context as well.
 In addition to the previous comparison with stringy works made,
 the 4th theme (subtitled:
  ``G\'{o}mez-Sharpe vs.\ Polchinski-Grothendieck")
  of Sec.~2.4 is to be read with the work
  [G-Sh] (arXiv:hep-th/0008150),
 while the 2nd theme (subtitled:
  ``Donagi-Katz-Sharpe vs.\ Polchinski-Grothendieck")
  of Sec.~4.2 is to be read with the work
  [D-K-S] (arXiv:hep-th/0309270).
 Sec.~4.3, though not yet ready to be subtitled
  ``Denef vs.\ Polchinski-Grothendieck", is to be read
  with the work [De] (arXiv:hep-th/0107152).
 Some directly related string-theory remarks are added
  to the end of each section.
} % endsmall
\end{quotation}

\bigskip
%\vspace{4em}

\baselineskip 12pt
{\footnotesize
\noindent
{\bf Key words:} \parbox[t]{14cm}{D-brane, A-type, B-type;
 Polchinski-Grothendieck Ansatz,
 Azumaya scheme, fundamental module, morphism;
 stack of D0-branes, representation-theoretical atlas;
 $B$-field, gerbe, twisted sheaf, ${\cal D}$-module,
 Azumaya quantum scheme, flat connection;
 Azumaya manifold, K\"{a}hler differential, tensor product;
 special Lagrangian morphism, moment map;
 matrix string, filtered structure,
 amalgamation/decomposition (assembling/disassembling) of D-branes;
 short vs.\ long string wrapping.
 }} %end-footnotesize

\bigskip

\noindent {\small MSC number 2010:
 14A22, 53C38, 81T30; 14D23, 15A54, 53D12, 81T75.
} % end-small

\bigskip

\baselineskip 10pt
% Re: 11pt for [11pt] style; 12pt for [12pt] style
{\scriptsize
\noindent{\bf Acknowledgements.}
We thank
 Andrew Strominger and Cumrun Vafa
  for their influence on our understanding of strings and branes
  over the years.
C.-H.L.\ thanks in addition
 Yng-Ing Lee and Li-Sheng Tseng
  for discussions on the new part of the current D(6);
 Hungwen Chang, Robert Gompf, Margaret Symington, and Katrin Wehrheim
 (resp.\ Ilia Zharkov)
  for influencing his understanding of symplectic
  (resp.\ calibrated) geometry;
 Alina Marion/Xiaowei Wang
  for a discussion on {\it Quot}-schemes/stability;
 Angelo Vistoli
  for a communication, explanations, and literature guide
  on themes in [L-Y7];
 Frederik Denef (with Dionysios Anninos),
 Dennis Gaitsgory, Peter Kronheimer,
 Alina Marion, Cumrun Vafa, and Hao Xu
  for topic courses/seminar, fall 2009 - spring 2010;
 and Ling-Miao Chou for moral support.
 The project is supported by NSF grants DMS-9803347 and DMS-0074329.
} %endscriptsize

\end{titlepage}

\newpage
\enlargethispage{25cm}

\begin{titlepage}

$ $

\vspace{3em}

\centerline{\small\it
 Chien-Hao Liu dedicates this work and review}
\centerline{\small\it
 to Shiraz Minwalla and Mihnea Popa$^{\,a}$}
\centerline{\small\it for their crucial influence/contribution
 to the project before it is materialized}
\centerline{\small\it
 and to Ling-Miao Chou for her tremendous love.}

%%%%%%%%%%%%%%%%%%%%%%%%%%%%%%%%%%%%%%%%%%
%
% \centerline{\small\it
%  Chien-Hao Liu dedicates this work/review to (in time order)}
% \centerline{\small\it
%  William Thurston, Hungwen Chang, Orlando Alvarez, Rafael Nepomechie,}
% \centerline{\small\it
%  Philip Candelas and Jacques Distler,}
% \centerline{\small\it
%   who together accidentally shaped his unexpected stringy journey,}
% \centerline{\small\it
%  Daniel Freed,}
% \centerline{\small\it
%  for one of the most important advices ever received,$^{\,a}$}
% \centerline{\small\it Shiraz Minwalla, Joe Harris, and Mihnea Popa}
% \centerline{\small\it
%  for further education that makes this project possible.}
%
%
% \bigskip
% %\vspace{2em}
%
%
% {\scriptsize
%  $$
%  \xymatrix{ &&& **[r]\bullet\,\mbox{\sl Cambridge} \\
%             &&& **[r]\bullet\,\mbox{\sl Princeton} \ar @/_/[llld]\\
%             **[l]\mbox{\sl Berkeley}\,\bullet \ar @/^3em/ [rrrdd]\\
%             & & **[l]\mbox{\sl Austin}\,\bullet \ar @/^/ [ruuu] & \\
%             & & & **[r]\bullet\,\mbox{\sl Coral Gables}\ar @/_/[lu]
%  }
%  $$
% } %endscriptsize
% %%%%%%%%%%%%%%%%%%
%
%%%%%%%%%%%%%%%%%%%%%%%%%%%%%%%

\vspace{6em}

{\small
$\hspace{1.6cm}\bullet$
A reflection on
 {\it string/M/F/$\cdots$? theory, branes, and dualities}$\,$:$^{\,b}$
\begin{figure}[htbp]
 \centerline{\psfig{figure=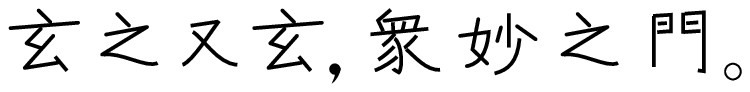,width=30pc,caption=}}
\end{figure}

\vspace{-2em}

\begin{itemize}
 \item[] $\hspace{2cm}$
 ($\,${\it Mystery and beyond mystery, door to all magics.}$\,$)
\end{itemize}

\begin{figure}[htbp]
 \centerline{\psfig{figure=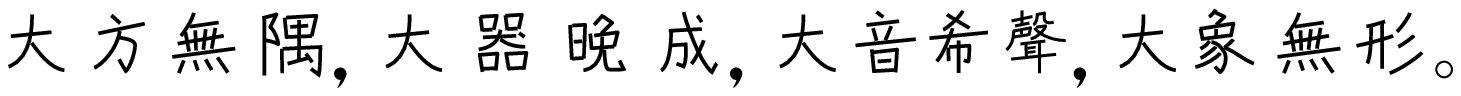,width=30pc,caption=}}
\end{figure}

\vspace{-2em}
\begin{itemize}
 \item[]
 $\hspace{2cm}$
 ($\,${\it So large that it has no bounds; \newline
  \mbox{$\hspace{2.2cm}$}
 so big that it takes a long time to make; \newline
  \mbox{$\hspace{2.2cm}$}
 so harmonious that it fits no tunes; \newline
  \mbox{$\hspace{2.2cm}$}
 so beautiful that it assumes no shapes.}$\,$)
\end{itemize}

} % end-small

\vspace{2em}

\noindent{---------------------------------------------------}

{\footnotesize
$^{a\,}$(From C.-H.L.)
 Before moving up, I went to see {\it Prof.~Freed},
   who is for sure a role model for mathematicians
   who pursue physics issues.
 There in his office he delivered to me one of the most important advices
  I ever received,
  ``{\it No matter how you're interested in physics,
         you are still a mathematician  and
         you have to find your ground in mathematics.}"
 With the unexpected luck to come to
  one of the major and most active centers of algebraic geometry
  in the world
  after being exposed to Calabi-Yau spaces and toric geometry
  in Prof.~Candelas' weekly group meetings,
 I quietly decided to take algebraic geometry as my new starting point,
  despite a well-disposed warning passed to me
  as a quote of a famous mathematician to a renowned string-theorist:
  ``If you haven't learned algebraic geometry by the age of 28,
    then you'd better just forget about it."
 It is
   the friendly, concrete, and down-to-earth way
   {\it Joe Harris} thinks about and teaches algebraic geometry
    and
   {\it Mihnea Popa}'s insightful, systematic, and wide-spanned series
    of courses and seminars at Harvard
   from Grothendieck's foundations to research topics at the frontier
    during the years 2001--2005
  that turned around my supposedly doomed fate of failure.

 A miraculous coincidence happened:
 During these years
  {\it Shiraz Minwalla} was giving an equally insightful, systematic,
  and wide-spanned series of courses at Physics Department
  from quantum field theory foundation, supersymmetry,
   phase structures in a Wilson's theory-space,
   to basic and frontier topics in string theory.
 His enthusiastic lectures constantly filled the classroom with heat
  and turned the would-be terrifying courses into intellectual enrichment.
 Thus, two exceptionally energetic teachers
  - one from the mathematics side and the other from the physics side -
  and one common student
  with his mind quietly set on a central question:
  {\it What is a D-brane?}
 I would not expect this project to finally get started in year 2007
  without having met them both at the right time and right place
  in the first half of 2000s.
 With the above input from Mihnea and Shiraz
  and the initiation of this long-delayed and almost-abandoned project,
  the special unexpected sequence of lucks given to me
   -- namely an accidental stringy journey:
     {\it Thurston $\Rightarrow$ Alvarez $\&$ Nepomechie
     $\Rightarrow$ Candelas $\&$ Distler $\Rightarrow$ Yau} --
  at last have a purpose and acquire their meaning as a whole.

 Above all these, the daily summary of works to each other
  with {\it Ling-Miao} over the years has been providing me
  with tremendous momenta to the theme.
 Only an extremely lucky person is given simultaneously
  a demanding stringy detour/journey,
  an unusual opportunity, and
  a long-standing supporting soulmate  and
 it takes not only effort but also great fortune
  so that these coincidences and accidental encounters
  are not just in vain.

$^{b\,}$Lao-Tzu (600 B.C.), {\sl Tao-te Ching}
   ({\sl The Scripture on the Way and its Virtue}),
   excerpt from Chapter~1 and Chapter~41;
   English translation by Ling-Miao Chou.
} % end-footnotesize

\end{titlepage}

%paper
\newpage
$ $

\vspace{-4em}  % Re: -4cm for PC; -6cm for UT-Math-system

%short heading
\centerline{\sc D-Branes and Azumaya Geometry}

\vspace{2em}

% \baselineskip 14pt  %Re: 14pt for [11pt] style
                      %Re: 15pt for [12pt] style.

\begin{flushleft}
{\Large\bf 0. Introduction and outline.}
\end{flushleft}
{\it D-branes}, defined in string theory as
 {\it boundary conditions for the end-points of open strings},
 appeared in the theory in the second half of 1980s
 and have become a central object in string theory since year 1995.
They reveal themselves in various faces/formats,
 depending on
 where they are looked at on the related {\it Wilson's
  theory-space}\footnote{Mathematicians
                 unfamiliar with this very important notion
                 from quantum field theory are referred to
                 [L-Y1: Appendix A.1]
                 for a brief introduction and literatures.
                In a sense, this is the grand-master moduli space/stack
                 with a built-in universal structure
                 that contains everything.}
 -- {\it either of superstring theory},
     locally parameterized by $(\tau^F_1, g_s)$ with
      $\tau^F_1$ being the tension of a fundamental open/closed string
       and
      $g_s$ being the string coupling constant related to
       the condensation of the dilaton field,
    {\it or of a $2$-dimensional superconformal field theory with boundary}
    ({\it $d=2$ SCFT}) --.

\bigskip

\begin{flushleft}
{\bf Where we are in the Wilson's theory-space of strings  and
     when we are in the history of D-branes.}
\end{flushleft}
In this review\footnote{In
           view of lots of issues still ahead,
           this is not yet the moment to write a review of this project.
          However, after a thought on a surprise contact
           from Vira Pobyzh/Richard Szabo,
          we believe that
           a brief overview of what we have been doing
            without being distracted by technical details and languages
           may still be a meaningful communication
           with mathematicians and string-theorists
           who work on D-branes.
         The writing remains a mathematical one.
         Nevertheless, string-theory-related remarks are collected
          in the end of each section,
          titled `{\it String-theoretical remarks}'.
         Mathematicians may ignore the string-theory stuff
          without influencing their understanding of
          the mathematical contents
         while string-theorists may replace
          the word ``{\it scheme}" by  ``{\it
           space completely and uniquely characterized by
           its local function rings
           with the latter allowed to have nilpotent elements}"  and
          the word ``{\it stack}"
           by ``{\it space obtained from a generalized gluing of schemes
           in a way that reflects the automorphism groups of objects
           the stack means to parameterize}"
           (via an Isom-functor construction),
         whenever either occurs, if they are not familiar with these
          two fundamental notions from modern algebraic geometry.
         We thank
          Mihnea Popa for several early discussions, spring 2002;
          William Oxbury and Andrei C\u{a}ld\u{a}raru
            for communicating their work
            in spring, 2002, and fall, 2007, respectively;
          Liang Kong and Eric Sharpe
           for sharing with us their insights on D-branes, fall 2007;
           and
          Si Li and Ruifang Song for the participation of [L-L-S-Y]
           (D(2)), spring 2008.
          %%%%%%%%%%%%%%%%%%%
          % We thank
          %  Mihnea Popa for early discussions;
          %  Liang Kong and Eric Sharpe
          %   for sharing their insights;
          %  Lieven Le Bruyn and Lubos Motl
          %   for comments;
          %  Clark Barwick, John Beachy, G\'{e}rard Laumon, Yng-Ing Lee,
          %  Miao Li, Wenxuan Lu, Edoardo Sernesi, Alessandro Tomasiello,
          %  Cumrun Vafa, and Angelo Vistoli
          %   for communications/discussions/guides on technical points
          %   we either already came to or will meet in this project; and
          %  Si Li and Ruifang Song for the participation of D(2).
          %%%%%%%%%%%%%%%%%%%%%%%%%%
          }
  of [L-Y3] (D(1)), [L-L-S-Y] (D(2), with Si Li and Ruifang Song),
  [L-Y4] (D(3)), [L-Y5] (D(4)), [L-Y6] (D(5)),
  and some preparatory part of [L-Y7],
 we stand
  \begin{itemize}
   \item[$\cdot$] ({\it where})\\
    in the regime of the related Wilson's theory-space
     either
      {\it where the D-brane tension is small but still large enough
      compared to $\tau^F_1$}
      --
       so that the D-branes remain easily excitable
        by open strings with a boundary attached to them   and
       that they won't bend the causal structure of the target space-time
        to form a surrounding event horizon to close themselves up to
        black branes --
     or {\it where a boundary state of a $d=2$ SCFT remains having
      a space-time interpretation}
       -- so that ``branes" are really branes; and

   \item[$\cdot$] ({\it when})\\
    either at the year 1995 when
     Polchiski [Pol2] realized that
      D-branes serve as a source for Ramond-Ramond fields created
      by excitations of closed superstrings and
     Witten [Wi4] gave an immediate follow-up to consider bound systems
      of D-branes
    or, exactly speaking, at the {\it year 1988}
      when D-branes came into light, in Polchinski and Cai's work [P-C],
       and
      the mass-tower of fields thereupon and the dynamics of these fields
      follow respectively
       from open-superstring spectrum and
       from the vanishing requirement of the conformal anomaly
        on the superconformal field theory on the open-string world-sheet,
    (see [Pol4] for related references),
  \end{itemize}
  and ask, based on what string-theorists taught us,
  \begin{itemize}
   \item[$\cdot$]
   {\bf Q.} {\it What truly is a D-brane in its own right?}
  \end{itemize}

Our starting point is a paragraph in Polchinski's textbook
 [Pol4: vol.~I, Sec.~8.7, theme: {\it The D-brane action}, p.~272]
 concerning
 \begin{itemize}
  \item[$\cdot$] \parbox[t]{14cm}{\it
   a matrix-type noncommutative enhancement of
   target space-time when probed by stacked D-branes;}
 \end{itemize}
see also [Joh: Sections~4.10, 5.5, 9.7, 16.3]
 and Sec.~1.1 of the current work for more discussions/review.
It turns out that understanding this mysterious behavior of D-branes
 holds a key to realize a fundamental mathematical nature of D-branes.
In the current review,
we explain this particular aspect of D-branes,
 namely the one from
 \begin{itemize}
  \item[]\parbox[t]{14cm}{\it
   merging the above mysterious behavior of stacked D-branes
   and Grothendieck's viewpoint of local contravariant equivalence
   of function rings and geometries.}
 \end{itemize}
This brings in the notion of
 {\it Azumaya noncommutative schemes with a fundamental module}
 in the algebro-geometric category  or
 {\it Azumaya noncommutative manifolds with a fundamental module}
  in the differential/symplectic topological category.
The correct notion of morphisms therefrom to a string target-space
 gives us then a re-formulation of the notion of D-branes
 that can reproduce several key features of D-branes
 in the string-theory literature, originally derived
 from open-string-induced quantum field theory on D-branes,
 cf.\ [L-Y3], [L-Y4], and [L-Y5].
Furthermore,
in the algebro-geometric side,
 the moduli space/stack of such morphisms has a surprising feature
 of serving as a master moduli space/stack that simultaneously
 incorporates several different moduli spaces/stacks
 in commutative algebraic geometry,
 cf.~[L-L-S-Y] and [L-Y6].
In the symplecto-geometric side, the notion rings naturally with
 how Donaldson looks at special Lagrangian submanifolds
 in a Calabi-Yau manifold as a special class of maps
 into the Calabi-Yau space, cf.~[Don] and [Hi2].
This matches nicely with the role of D-branes
 as a master object in string theory.
All these together give us an evidence that
 \begin{itemize}
  \item[]\parbox[t]{14cm}{{\it
   the Azumaya-type noncommutative geometric structure
   on a D-brane world-volume},
   rather than on the string target-space(-time),  {\it
   can provide us with an alternative starting point
   to understanding D-branes.}}
 \end{itemize}
As a hindsight, this is a step one could already take in year 1988,
 instead of nearly twenty years later.
In particular, one may try to re-do everything about D-branes
 with this Ansatz.

\bigskip

\begin{flushleft}
{\bf The DOR triangle.}
\end{flushleft}
While this review and the so-far-posted part of the project discuss
 only D-branes, one should always keep in mind the mysteries of
 the other two closely related disciplines as well:
 (Cf.~bottom of the
  {\it D}-brane/{\it o}pen-string/{\it R}amond-Ramond-field triangle.)

 $$
  \xymatrix@!=3pc{
    & \mbox{\it D-branes\rule[-1ex]{0ex}{1ex}}\ar@<.6ex>[dr]\ar[dl] & \\
   **[l]\mbox{\it Open strings\hspace{1ex}}\ar@<.3ex>[rr]\ar@<.6ex>[ur]
    && **[r]\mbox{\hspace{1ex}\it Ramond-Ramond fields}
            \ar@<.3ex>[ll]\ar[ul]
  }
 $$

\bigskip

Indeed, it is a pursuit of understanding open Gromov-Witten theory
 that turned us back to re-thinking D-branes, cf.~[L-Y3: footnote~1].
Furthermore, it is known that
 Ramond-Ramond fields on a string target-space(-time)
  are not just differential forms thereupon.
Their complications are already manifest from
 both
  the F-theory interpretation of the rank-$0$ Ramond-Ramond field
  $C_{(0)}$ ([Va2] and [M-V])
 and
  the $\SL(2,{\Bbb Z})$-duality of type IIB superstring theory that
  exchanges the rank-$2$ Ramond-Ramond field $C_{(2)}$,
   which sources D-strings,
  with the $B$-field, which sources fundamental strings ([Schw]).
In particular, such yet-to-be-understood fundamentals of
 Ramond-Romand fields are necessary to realize
 how and why a general collection of D-branes can live
 on a compact Calabi-Yau space
 without violating the charge conservation law of D-branes.

\bigskip

\begin{flushleft}
{\bf String-theoretical remarks.}
\end{flushleft}
As an object developed for more than twenty years since late 1980s,
 that have led to numerous new insights to string theory,
string-theorists do have a very good reason to question that,
 \begin{itemize}
  \item[$\cdot$] \parbox[t]{14cm}{\it
   Didn't we string-theorists already know
         all the fundamentals of D-branes?}
 \end{itemize}
{For} string-theorists who do have this doubt,
 we suggest them to recall a most important example
 of a similar gap or concept-delay between physics and mathematics,
 namely the notion of ``path-integrals in quantum field theory".
It began with Richard Feynman's Ph.D.\ thesis
 under John A.\ Wheeler at Princeton, 1942,
 with title
 ``The principle of least action in quantum mechanics"
and has now become
 a central/standard language in quantum field theory.
It provides even the very definition of what it means
 when claiming two quantum field theories are ``the same",
 a concept that pervades string literatures nowadays.
Yet, despite the sixty-eight years that have passed,
mathematicians still look at this notion with amazement and awe.
During this time, there have been mathematical attempts to
 understand it --
 first in an infinite-dimensional analysis/measure-theory aspect,
 later in a functor-and-category aspect and in a combinatorics aspect,
 and more recently in a motive aspect.
Each of these reveals some mathematical nature of path-integrals
 behind physicists' formal rules.
With this example in mind, string-theorists may be willing to
 re-think about
{\it the phenomenon of space-time being enhanced to matrix-valued
     noncommutativity when probed by stacked D-branes}.
 \begin{itemize}
  \item[$\cdot$] \parbox[t]{14cm}{\it
   Is it really the space-time geometry that is enhanced  or,
   indeed, is it the world-volume of the stacked D-branes
   that is enhanced first?}
 \end{itemize}
While it looks plausible in some situations
 for a trading between noncommutativity from these two opposing aspects,
to our best understanding at the moment,
 such trading can only be at best partial and
 the full moduli problems different aspects lead to
  are different mathematically.
Indeed, the very question turns out to be related to another question:
 \begin{itemize}
  \item[$\cdot$] \parbox[t]{14cm}{\it
  Taking D-branes, say, of the lowest dimensions in a theory,
   as truly fundamental objects,  then
  can they recover their signatory features by themselves
   without resorting to open strings?}
 \end{itemize}

In this notes/review,
 we are not trying to tell string-theorists what they should think.
Rather, we explain our thoughts here
 in a hopefully languagewise-friendlier way than our previous works
and welcome string-theorists to ask themselves
 the above same questions and find their own answers.
This is not an issue of rigor;
 we regularly mix ourselves with string-theorists and
try to learn/appreciate the way they think
 and hence already pass that long time ago.
{\it This is an issue of what exactly is fundamentally
     at work for D-branes.}
In retrospect, the ansatz we propose in [L-Y3] (D(1)),
  cf.\ Sec.~1.1 of this review,
 could have been observed by string-theorists themselves in year 1988
 as well, due to its simplicity and
  the fact that the computation of open string spectrum and
  the induced massless field contents on stacked D-branes
  were already ready at that time,
if Grothendieck's EGA and SGA series of works
 had entered the stringy community before then.
Indeed, there have been efforts from different string-theory groups
  to understand better the foundation of D-branes.
 %%%%%%%%%%
 %  in the region of the Wilson's theory-space of string theory
 % where the D-branes in question are not necessarily BPS objects.
 %%%%%%%%%%
E.g.~the work [G-Sh] of Tom\'{a}s G\'{o}mez and Eric Sharpe
 at year 2000 (cf.\ Sec.~2.4, Test (4), of the current review)  and
see [L-Y3: Remark~2.2.5] for a sample of other stringy groups.
They all influenced our thoughts on D-branes one way or another
 during the brewing years.
In Spring 2007, after a train of discussions
 with Duiliu-Emanuel Diaconescu in December 2006
 on open-string world-sheet instantons
    and a vanishing lemma of open Gromov-Witten invariants
    (cf.~[D-F] and [L-Y2])
 that turned us back to re-thinking D-branes,
we realized that
 {\it it is a carefully selected spirit of Grothendieck's works
      on commutative algebraic geometry that we have to take,
      but not necessarily its full contents}.
It is this realization that finally enabled us to re-read
 stringy works on D-branes in Grothendieck's eyes.
Occasionally, we wonder if the history were rerun  and
 the ansatz were observed by string-theorists either in 1995 or 1988,
how things would have been different
 in the physics and the mathematics side.
We would never know.
All we can do is explain and push this ansatz to the extreme
 and let string-theorists decide for themselves.
Overall, D-branes remain
 a very complicated and mysterious object to us.
We have yet much to learn and to be amazed.

\bigskip

The very limited short list of stringy literatures
 quoted in the current work and the more special ones
 in the previous D(1) - D(5) of the project
 are among those we constantly go back to
 to find new insights, new understandings, and new guides.
In string-theorists' standard, a paper passing five years
 can be regarded as an old paper.
Yet, these ``very old" papers remain to us a reservoir of inspirations.
They remain rich deposits of gold for string-theory-oriented
 mathematical minds.
On the other hand, while they influenced us greatly,
 they by no means reflect the activity of this field.
Unfamiliar readers are suggested to read
 the book `{\sl D-branes}' [Joh] by Johnson and
 the string textbook [B-B-Sc] by Becker, Becker, and Schwarz
 and the original (mainly physical) works quoted therein
 to gain balanced and more complete insights and feelings of
 the various aspects of D-branes
 in the first seven years to a decade, 1995--2002/1995--2006,
after the second revolution of string theory in 1995.

\bigskip

\bigskip

\noindent
{\bf Convention.}
 Standard notations, terminology, operations, facts in
  (1) physics aspects of D-branes;
  (2) supersymmetry;
  (3) (commutative) algebraic geometry / stacks;
  (4) associative rings and algebras;
  (5) symplectic / calibrated geometry;
  (6) sheaves on manifolds
  can be found respectively in
  $\;$(1) [Po], [Joh];
  $\;$(2) [W-B], [Arg], [Te];
  $\;$(3) [E-H], [Ha] / [L-MB];
  $\;$(4) [Pi], [Re];
  $\;$(5) [McD-S] / [Ha-L], [McL];
  $\;$(6) [Kas-S].
 \begin{itemize}
  \item[$\cdot$]
   All {\it schemes} are Noetherian over ${\Bbb C}$.

  \item[$\cdot$]
   All associative {\it rings} and {\it algebras} are unital.

  \item[$\cdot$]
   All {\it manifolds} are smooth, closed, and orientable
    unless otherwise noted.

  \item[$\cdot$]
   Index whose precise value is
    either clear from the text or irrelevant to the discussion
   is occasionally
    omitted to a $\bullet$ to make the expression cleaner/simpler.

  \item[$\cdot$]
   ${\Bbb C}$ as a {\it field} in algebra
   vs.\ ${\Bbb C}$
        as the {\it complex line} in complex/symplectic geometry.

  \item[$\cdot$]
   D-branes of A-type (resp.\ B-type) are supported
    on special Lagrangian cycles (resp.\ holomorphic cycles).
   The name follows [B-B-St], [O-O-Y], and [H-I-V].

  \item[$\cdot$]
   {\it Field} $B$ in the sense of quantum field theory
   vs.\ {\it base} scheme $B$ in algebraic geometry\\
   vs.\ D-{\it branes} of B-type in string theory.

  \item[$\cdot$]
   {\it Noncommutative algebraic geometry} is a very technical topic,
    with various nonequivalent points of view and formulations.
   For the current work,
    [Art] of Artin and [A-N-T] of Artin, Nesbitt, and Thrall
      on {\it Azumaya algebras},
    [K-R] of Kontsevich and Rosenberg
      on {\it noncommutative smooth spaces},  and
    [LeB1], [LeB2], [LeB3] of Le Bruyn
      on {\it noncommutative geometry@$n$}
    are particularly relevant.
   See [L-Y3: References] for more references.

   \item[$\cdot$]
   Mathematicians without quantum field theory background may still
    get {\it a feel of D-branes in physicists' perspective} from
    [Zw] of Zwiebach;
   see also [B-B-Sc] of Becker, Becker, and Schwarz
    for an updated account of string theory up to year 2006.
 \end{itemize}

\bigskip

\bigskip
%\newpage

\begin{flushleft}
{\bf Outline.}
\end{flushleft}
{\small
\baselineskip 12pt  %13pt
\begin{itemize}
 \item[0.]
  Introduction:
  \vspace{-.6ex}
  \begin{itemize}
   \item[$\cdot$]
    Where we are in the Wilson's theory-space of strings
    and when we are in the history\\ of D-branes.

   \item[$\cdot$]
    The DOR triangle.

   \item[$\cdot$]
    String-theoretical remarks.
  \end{itemize}

 \item[1.]
  D-branes and the Polchinski-Grothendieck Ansatz.
  \vspace{-.6ex}
  \begin{itemize}
   \item[1.1]
    From stacked D-branes to the Polchinski-Grothendieck Ansatz.

   \item[1.2]
    String-theoretical remarks. %:
    % Should string-theorists believe this ansatz?
  \end{itemize}

 \item[2.]
  The algebro-geometric aspect:
  Azumaya noncommutative schemes with a fundamental module\\
  and morphisms therefrom.
  \vspace{-.6ex}
  \begin{itemize}
   \item[2.1]
    What should be the Azumaya-type noncommutative geometry and
    a morphism from\\ an Azumaya-type noncommutative space?

   \item[2.2]
    The four aspects of a morphism from an Azumaya scheme
     with a fundamental module\\ to a scheme.

   \item[2.3]
    D-branes in a $B$-field background
    \`{a} la Polchinski-Grothendieck Ansatz.
    \begin{itemize}
     \item[2.3.1]
      Nontrivial Azumaya noncommutative schemes
      with a fundamental module  and\\ morphisms therefrom.

     \item[2.3.2]
      Azumaya quantum schemes with a fundamental module
       and morphisms therefrom.
    \end{itemize}

   \item[2.4]
    Tests of the Polchinski-Grothendieck Ansatz for D-branes.

   \item[2.5]
    Remarks on general Azumaya-type noncommutative schemes.

   \item[$\cdot$]
    String-theoretical remarks on Sec.~2. % :
   %%%%%%%
   %  D-branes of B-type and matrix gauged linear sigma models\\
   %  \`{a} la Polchinski-Grothendieck Ansatz.
   %%%%%%%%%%%%%%%%%%%
  \end{itemize}

 \item[3.]
  The differential/symplectic topological aspect:
  Azumaya noncommutative $C^{\infty}$-manifolds\\
  with a fundamental module and morphisms therefrom.
  \vspace{-.6ex}
  \begin{itemize}
   \item[3.1]
    Azumaya noncommutative $C^{\infty}$-manifolds
     with a fundamental module and morphisms\\ therefrom.

   \item[3.2]
    Lagrangian morphisms and special Lagrangian morphisms:\\
    Donaldson and Polchinski-Grothendieck.

   \item[$\cdot$]
   String-theoretical remarks on Sec.~3.
   %%%%%%
   %  D-branes of A-type \`{a} la Polchinski-Grothendieck Ansatz.
   %%%%%%%%%%%%%%%%%%%%%
  \end{itemize}

 \item[4.]
  D-branes of A-type on a Calabi-Yau torus and their transitions.
  \vspace{-.6ex}
  \begin{itemize}
   \item[4.1]
    The stack ${\frak M}^{\,0^{Az^f}}_{\,r}\!\!(C)$.

   \item[4.2]
    Special Lagrangian cycles with a bundle/sheaf
    on a Calabi-Yau torus\\ \`{a} la Polchinski-Grothendieck Ansatz.

   \item[4.3]
    Amalgamation/decomposition (or assembling/disassdembling)
    of special Lagrangian\\ cycles with a bundle/sheaf.

   \item[$\cdot$]
   String-theoretical remarks on Sec.~4.
   %%%%%%
   %  Branching of BPS states and matrix/tensionless strings\\
   %  \`{a} la Polchinski-Grothendieck Ansatz.
   %%%%%%%%%%%%%%%%%%%%
  \end{itemize}
%%%%%%%%%
% \item[]\hspace{-1.7em}
% Appendix.
%  ???????????.
%%%%%%%%%
\end{itemize}
} %endsmall

\newpage

\section{D-branes and the Polchinski-Grothendieck Ansatz.}

We review in this section
 how the Polchinski-Grothendieck Ansatz for D-branes arises
 when a crucial open-string-induced behavior of fields
 on stacked D-branes is re-read from Grothendieck's viewpoint.
Readers are referred to [L-Y3: Sec.~2] for further discussions.

\bigskip

\subsection{From stacked D-branes to the Polchinski-Grothendieck
            Ansatz.}

{From} a fundamental question,
we are led to an intrinsic nature of D-branes.

\bigskip

\begin{flushleft}
{\bf What is a D-brane?}
\end{flushleft}
A {\it D-brane} (i.e.\ {\sl D}{\it irichlet} {\it mem}{\sl brane})
 is meant to be a boundary condition for open strings
 in whatever form it may take,
 depending on where we are in the related Wilson's theory-space.
A realization of D-branes that is most related to the current work
 is an embedding $f:X\rightarrow Y$ of a manifold $X$
 into the open-string target space-time $Y$
 with the end-points of open strings being required to lie in $f(X)$.
This sets up a $2$-dimensional Dirichlet boundary-value problem
 from the field theory on the world-sheet of open strings.
Oscillations of open strings with end-points in $f(X)$
 then create a mass-tower of various fields on $X$,
 whose dynamics is governed by open string theory.
This is parallel to the mechanism that
 oscillations of closed strings create fields in space-time $Y$,
 whose dynamics is governed by closed string theory.
Cf.~Figure 1-1-1.

\begin{figure}[htbp]
 \epsfig{figure=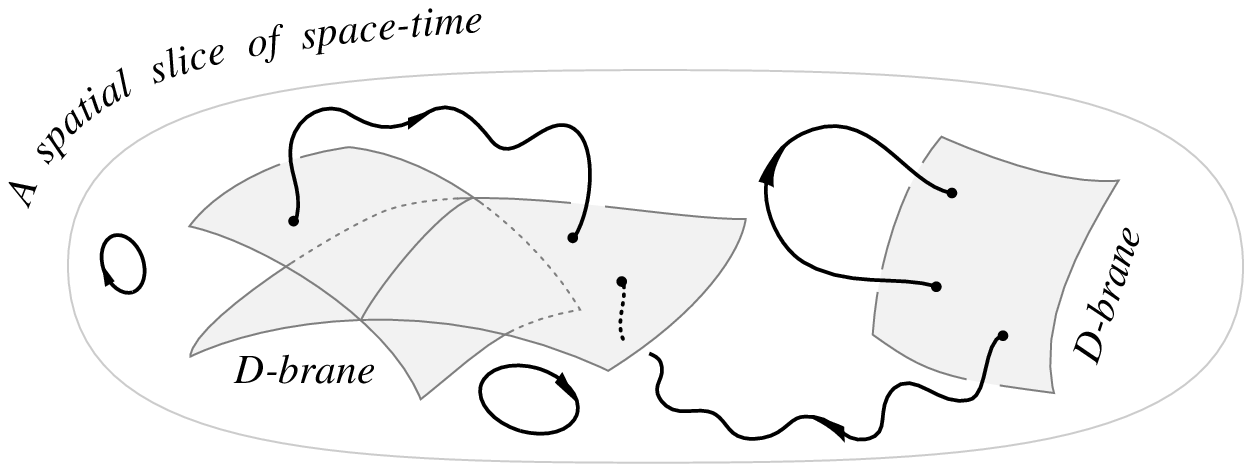,width=16cm}
 \centerline{\parbox{13cm}{\small\baselineskip 12pt
  {\sc Figure} 1-1-1.
  D-branes as boundary conditions for open strings in space-time.
  This gives rise to interactions of D-brane world-volumes
   with both open strings and closed strings.
  Properties of D-branes,
    including the quantum field theory on their world-volume and
              deformations of such,
   are governed by open and closed strings via this interaction.
  Both oriented open (resp.\ closed) strings and
   a D-brane configuration are shown.
  }}
\end{figure}

\noindent
Let $\xi:=(\xi^a)_a$ be local coordinates on $X$ and
  $\Phi:=(\Phi^a;\Phi^{\mu})_{a,\mu}$ be local coordinates on $Y$
  such that the embedding $f:X\hookrightarrow Y$ is locally
  expressed as
  $$
   \Phi\; =\; \Phi(\xi)\; =\; (\Phi^a(\xi); \Phi^{\mu}(\xi))_{a,\mu}\;
   =\; (\xi^a,\Phi^{\mu}(\xi))_{a,\mu}\,;
  $$
 i.e., $\Phi^a$'s (resp.\ $\Phi^{\mu}$'s) are local coordinates along
       (resp.\ transverse to) $f(X)$ in $Y$.
This choice of local coordinates removes redundant degrees of freedom
 of the map $f$, and
$\Phi^{\mu}=\Phi^{\mu}(\xi)$ can be regarded as (scalar) fields on $X$
 that collectively describes the postions/shapes/fluctuations
 of $X$ in $Y$ locally.
Here, both $\xi^a$'s, $\Phi^a$'s, and $\Phi^{\mu}$'s are ${\Bbb R}$-valued.
The open-string-induced gauge field on $X$ is locally given
 by the connection $1$-form $A=\sum_a A_a(\xi)d\xi^a$
 of a $U(1)$-bundle on $X$.

When $r$-many such D-branes $X$ are coincident/stacked,
 from the associated massless spectrum of (oriented) open strings
 with both end-points on $f(X)$
 one can draw the conclusion that
 \begin{itemize}
  \item[(1)]
   The gauge field $A=\sum_a A_a(\xi)d\xi^a$ on $X$ is enhanced to
    $u(r)$-valued.

  \item[(2)]
   Each scalar field $\Phi^{\mu}(\xi)$ on $X$ is also enhanced
    to matrix-valued.
 \end{itemize}
Property (1) says that there is now a $U(r)$-bundle on $X$.
But
 \begin{itemize}
  \item[$\cdot$]
   {\bf Q.}\ {\it What is the meaning of Property (2)?}
 \end{itemize}
{For} this, Polchinski remarks that:
 (Note:
  Polchinski's $X^{\mu}$ and $n$ $\;=\;$ our $\Phi^{\mu}$ and $r$.)
\begin{itemize}
 \item[$\cdot$]
 [{\sl quote from} [Pol4: vol.~I, Sec.~8.7, p.~272]]\hspace{1em}
 ``{\it
  For the collective coordinate $X^{\mu}$, however, the meaning
   is mysterious: the collective coordinates for the embedding of
   $n$ D-branes in space-time are now enlarged to $n\times n$ matrices.
  This `noncommutative geometry' has proven to play a key role in
   the dynamics of D-branes, and there are conjectures that
   it is an important hint about the nature of space-time.}"
\end{itemize}
(See also a comment in [Joh: Sec.~4.10 (p.~125)].)
{From} the mathematical/geometric perspective,
 \begin{itemize}
  \item[$\cdot$]
  Property (2) of D-branes, the above question, and Polchinski's remark
 \end{itemize}
 can be incorporated into the following single guiding question:
 \begin{itemize}
  \item[$\cdot$]
  {\bf Q.\ [D-brane]}$\;$
  {\it What is a D-brane intrinsically?}
 \end{itemize}
In other words, what is the {\it intrinsic} nature/definition
 of D-branes so that {\it by itself}
 it can produce the properties of D-branes
 (e.g.\ Property (1) and Property (2) above)
 that are consistent with, governed by, or originally produced by
 open strings as well?

\bigskip

\begin{flushleft}
{\bf From Polchinski to Grothendieck.}
\end{flushleft}
{To} understand Property (2), one has two perspectives:
 \begin{itemize}
  \item[(A1)]
   [{\it coordinate tuple as point}]\hspace{1em}
   A tuple $(\xi^a)_a$ (resp.\ $(\Phi^a; \Phi^{\mu})_{a,\mu}$)
    represents a point on the world-volume $X$ of the D-brane
    (resp.\ on the target space-time $Y$).

  \item[(A2)]
   [{\it local coordinates as generating set of
         local functions}]\hspace{1em}
   Each local coordinate $\xi^a$ of $X$
    (resp.\ $\Phi^a$, $\Phi^{\mu}$ of $Y$)
    is a local function on $X$ (resp.\ on $Y$)  and
   the local coordinates $\xi^a$'s
    (resp.\ $\Phi^a$'s and $\Phi^{\mu}$'s) together
    form a generating set of local functions on the world-volume $X$
    of the D-brane (resp.\ on the target space-time $Y$).
 \end{itemize}
While Aspect (A1) leads one to the anticipation of a noncommutative space
 from a noncommutatization of the target space-time $Y$
 when probed by coincident D-branes,
Aspect (A2) of Grothendieck leads one to
 a different -- seemingly dual but not quite -- conclusion:
  a noncommutative space from a noncommutatization of
  the world-volume $X$ of coincident D-branes,
 as follows.

Denote by ${\Bbb R}\langle \xi^a\rangle_{a}$
  (resp.\ ${\Bbb R}\langle \Phi^a; \Phi^{\mu}\rangle_{a, \mu}$)
 the local function ring on the associated local coordinate chart
 on $X$ (resp.\ on $Y$).
Then the embedding $f:X\rightarrow Y$,
  locally expressed as
  $\Phi=\Phi(\xi)
   =(\Phi^a(\xi); \Phi^{\mu}(\xi))_{a,\mu}=(\xi^a; \Phi^{\mu}(\xi))$,
 is locally contravariantly equivalent to
 a ring-homomorphism\footnote{For
             string-theorists:
            I.e.\ pull-back of functions from the target-space $Y$
             to the domain-space $X$ via $f$.
            See Sec.~2.1 for more about Grothendieck's philosophy
             for constructing `geometric spaces' and `morphisms'
             among them; cf.\ [E-H] and [Ha].}
 $$
  f^{\sharp}\;:\;
   {\Bbb R}\langle \Phi^a; \Phi^{\mu}\rangle_{a, \mu}\;
   \longrightarrow\; {\Bbb R}\langle \xi^a\rangle_{a}\,,
  \hspace{1em}\mbox{generated by}\hspace{1em}
  \Phi^a\;\longmapsto\; \xi^a\,,\;
  \Phi^{\mu}\;\longmapsto\;\Phi^{\mu}(\xi)\,.
 $$
When $r$-many such D-branes are coincident, $\Phi^{\mu}(\xi)$'s
 become $M_r({\Bbb C})$-valued.
Thus, $f^{\sharp}$ is promoted to a new local ring-homomorphism:
 $$
  \hat{f}^{\sharp}\;:\;
   {\Bbb R}\langle \Phi^a; \Phi^{\mu}\rangle_{a, \mu}\;
   \longrightarrow\; M_r({\Bbb C}\langle \xi^a\rangle_{a})\,,
  \hspace{1em}\mbox{generated by}\hspace{1em}
  \Phi^a\;\longmapsto\; \xi^a\cdot{\mathbf 1}\,,\;
  \Phi^{\mu}\;\longmapsto\;\Phi^{\mu}(\xi)\,.
 $$
Under Grothendieck's contravariant local equivalence of function rings
 and spaces, $\hat{f}^{\sharp}$ is equivalent to saying that we have
 now a map $\hat{f}: X_{\scriptsizenoncommutative}\rightarrow Y$,
  where $X_{\scriptsizenoncommutative}$ is the new domain-space,
   associated now to the enhanced function-ring
   $M_r({\Bbb C}\langle \xi^a\rangle_{a})$.
Thus, the D-brane-related noncommutativity in Polchinski's treatise
 [Pol4], as recalled above, implies the following ansatz
 when it is re-read from the viewpoint of Grothendieck:

\bigskip

\noindent
{\bf Polchinski-Grothendieck Ansatz [D-brane: noncommutativity].}
{\it
 The world-volume of a D-brane carries a noncommutative structure
 locally associated to a function ring of the form $M_r(R)$,
 where $r\in {\Bbb Z}_{\ge 1}$ and
   $M_r(R)$ is the $r\times r$ matrix ring over $R$.
} % end-ansatz

\bigskip

\noindent
We call a geometry associated to a local function-rings of matrix-type
 {\it Azumaya-type noncommutative geometry};
cf.~[Art] and [L-Y6: footnote~23].
Note that
 when the closed-string-created $B$-field on the open-string target
  space(-time) $Y$ is turned on,
 $R$ in the Ansatz can become noncommutative itself;
cf.~[S-W], [L-Y6], and Sec.~2.3.2 of the current review.
Cf.~Figure~1-1-2.
  % Figure[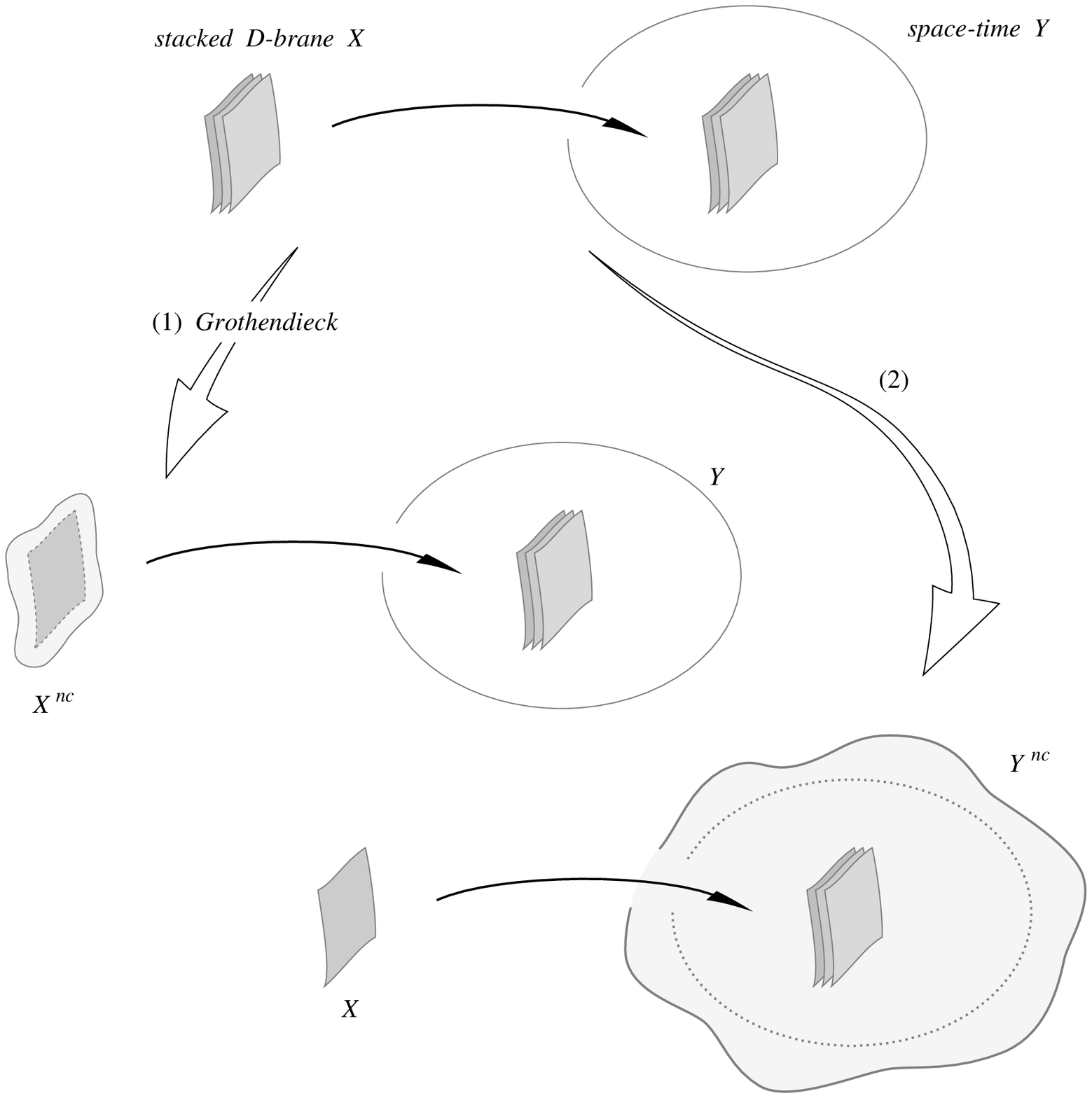]

\bigskip

\begin{figure}[htbp]
 \epsfig{figure=pga-nc.eps,width=16cm}
 \centerline{\parbox{13cm}{\small\baselineskip 12pt
  {\sc Figure} 1-1-2.
  Two counter (seemingly dual but not quite) aspects
   on noncommutativity related to coincident/stacked D-branes:
   (1) (= (A2) in the text)
       noncommutativity of D-brane world-volume
       as its fundamental/intrinsic nature
   versus
   (2) (= (A1) in the text)
       noncommutativity of space-time as probed by stacked D-branes.
  (1) leads to the {\it Polchinski-Grothendieck Ansatz} and
   is more fundamental from Grothendieck's viewpoint
   of contravariant equivalence of the category of local geometries
   and the category of function rings.
  }}
\end{figure}

\noindent
An additional statement hidden in this Ansatz that follows from
 mathematical naturality is that
\begin{itemize}
 \item[$\cdot$] {\it
  fields on $X$ are local sections of sheaves ${\cal F}$
   of modules of the structure sheaf ${\cal O}_X^{\nc}$
   of $X$ associated to the above noncommutative structure.}
\end{itemize}
Furthermore, this noncommutative structure on D-branes
 (or D-brane world-volumes)
 is more fundamental than that of space-time in the sense that,
\begin{itemize}
 \item[$\cdot$] {\it
  from Grothendieck's equivalence,
   the noncommutative structure of space-time, if any, can be detected
   by a D-brane only when the D-brane probe itself is noncommutative.}
\end{itemize}

When D-branes are taken as fundamental objects as strings,
 we no longer want to think of their properties as derived
 from open strings.
Rather, D-branes should have their own intrinsic nature
 in discard of open strings.
Only that when D-branes co-exist with open strings in space-time,
 their nature has to be compatible/consistent with
 the originally-open-string-induced properties thereon.
It is in this sense that we think of a D-brane world-volume
 as an Azumaya-type noncommutative space, following the Ansatz,
 on which other additional compatible structures
  -- in particular, a Chan-Paton module -- are defined.

\bigskip

\subsection{String-theoretical remarks.} %:
            % Should string-theorists believe this ansatz?}

Now that a detailed explanation of the ansatz is re-given in Sec.~1.1,
Readers from string-theory side are suggested to re-think about
the theme `String-theoretical remarks' in Sec.~0.
Further remarks follow.

\bigskip

\begin{flushleft}
{\bf How about Lie-algebra type function-rings?}
\end{flushleft}
Naively, one may feel that
 even if a noncommutative structure does emerge on the world-volume
  of stacked D-branes, it should be of Lie-algebra type.
There are reasons against taking Lie algebras as the function rings
 locally for the D-brane world-volume geometry:
 \begin{itemize}
  \item[$\cdot$] ({\it physical reason})\\
   A completely physical reason is that
    indeed Lie algebra is not truly the algebra that is used
    in describing the action for the open-string-induced
    quantum field theory on the world-volume of stacked D-branes
    before taking trace.
   While it is possible that
    the potential term for the fields
     governing the deformation of D-branes in such an action
    is given by a combination of Lie-brackets of these fields,
    the kinetic term for them before taking an appropriate trace is not.
   Since every associative (unital) algebra is associated also
    with a Lie algebra by taking commutators,
   it is really the matrix product of these fields
    that is truly behind the action for stacked D-branes.

  \item[$\cdot$] ({\it mathematical reason})\\
   There are also mathematical reasons:
   A Lie algebra is non-associative and
    has no identity element with respect to the Lie product.
   Such an algebra is difficult to do geometry
    due to the difficulty to introduce the notion of open sets
    by ``localizing the algebra"
    (i.e.\ inverting some elements in the Lie algebra).
   Without this concept, the passage from local to global
    via gluing open sets becomes obstructed.
   Furthermore, there is no algebra-homomorphism from
    a commutative ring $R$ to a Lie algebra $L$.
   (Even if one ignores the issue of the identity element,
     then any such algebra-homomorphism is the zero-homomorphism.)
   Thus, no morphisms
    $$
     \Space(L)\; \longrightarrow\; \Space(R)\, :=\, \Spec R
    $$
    can be constructed.
   This makes it difficult to talk about
    a stacked D-brane, say, in a Calabi-Yau space
    as a morphism from the Lie-type-noncommutative D-brane world-volume
    (even if such a notion exists) to the Calabi-Yau space.
 \end{itemize}

\bigskip

\begin{flushleft}
{\bf Need supersymmetry or ${\boldmath B}$-field?}
\end{flushleft}
The Azumaya-type noncommutative structure on the D-brane world-volume
 stated in the ansatz occurs
 whether or not there is a supersymmetry for the D-brane configuration
  or a $B$-field background on the target-space(-time).
The latter may influence this structure but is not the cause for it;
 cf.~[L-Y6] (D(5)).
The only cause for this structure is simply
 open-strings!\footnote{For
                   this reason, as a hindsight,
                    this noncommutativity-enhancement phenomenon and
                    the ansatz should really be observed in year 1988
                   since every necessary ingredient was readily there;
                   cf.~Sec.~0.}
This suggests that it is indeed a very fundamental property of D-branes.
See also [L-Y4: footnote~1] (D(3)) for a related remark.\footnote{We
                            thank Lubos Motl for a related comment.}

\bigskip

\begin{flushleft}
{\bf A comparison with quantum mechanics and quantization of strings.}
\end{flushleft}
{For} string-theorists who still feel uncomfortable about this ansatz,
 the following comparison is worthy of a thought.\footnote{We
                          thank Cumrun Vafa for the conceptual point
                           addressed here
                           made in his string-theory course, spring 2010,
                          and a discussion.}

In the study of quantum mechanics or quantization of strings,
there are bosonic fields $X^{\mu}$ on the particle world-line
 or the string world-sheet
 that are associated to the space-time coordinates.
Collectively, they describe the positions and deformations
 of the particle or string when time flows.
Quantization of the particle or string moving in a space-time
 renders $X^{\mu}$ operator-valued.
Thus, formally, in this process the space-time coordinates become
 operator-valued as well.
 \begin{itemize}
  \item[$\cdot$] {\bf Q.}\
  \parbox[t]{13cm}{\it {\bf [quantized or not quantized]}$\;$
   When a particle or string moving in a space-time gets quantized,
   does the space-time itself get quantized as well, or not?}
 \end{itemize}
Replacing `{\it particle}' or `{\it string}'
  by `{\it coincident/stacked D-branes}' and
  `{\it get(s) quantized}$\,$' by
 `{\it have/ (has) $X^{\mu}$ become matrix-valued}$\,$',
 one can ask exactly the same question for D-branes.
If one answers the above question by:
``Yes, the space-time is also (enforced to be) quantized",
 then for the replaced question for D-branes,
 one is led to the currently more preferred view
  in the string-theory community
  that the space-time become matrix-ring-type-noncommutatively enhanced.
{\it If one answers instead:
 ``No, the space-time is not quantized; it remains classical",
 then for the replaced question for D-branes,
  one is led exactly to the Polchinski-Grothendieck Ansatz!}

\bigskip

\section{The algebro-geometric aspect:
         Azumaya noncommutative\\ schemes with a fundamental module
         and morphisms\\ therefrom.}

Once one understands the fundamental Azumaya-algebra nature of
 the local function-rings of D-branes (and their world-volume),
since the D-branes ``observed" by open-strings are their image
 in the target-space(-time) via a map, cf.\ Sec.~1.1,
an immediate question is then
 \begin{itemize}
  \item[$\cdot$] {\bf Q.}\ \parbox[t]{13cm}{\it {\bf [morphism]}$\;$
   What should be a morphism from an Azumaya noncommutative space
   $X^{\!A\!z}$ to a string target-space(-time) $Y$?}
 \end{itemize}
In this section, we address this technical problem
 in the algebro-geometric category.
This is where different viewpoints/formulations of
 a ``noncommutative geometry" may lead to different answers.
It turns out that
 one cannot hope to extend Grothendieck's construction
 of commutative algebraic geometry in full
 to the noncommutative case\footnote{This
                is a standing research problem for noncommutative
                 algebraic geometers for more than three decades
                 (or even nearly eight decades
                  beginning with the work of Ore, year 1931,
                  that touches upon the notion of
                  ``noncommutative localizations").
                There are various technical issues for the notion
                 of a ``noncommutative scheme"
                 either as a topological space or as a category,
                 pursued by several groups of mathematicians.
                See [L-Y3: References].}
and, hence, {\it we have to make a choice,
 guided by what truly matters for D-branes}.
This is the place we start to diverge
 from the various existing versions of ``noncommutative geometry".

\bigskip

\subsection{What should be the Azumaya-type noncommutative geometry
    and\\ a morphism from an Azumaya-type noncommutative space?}

\begin{flushleft}
{\bf How to construct/understand a ``geometric space"?}
\end{flushleft}
We may start with a related fundamental question:
\begin{itemize}
 \item[$\cdot$]
  {\bf Q.}\ {\it
  How do we ``construct/understand" a ``geometric space $X$"?}
\end{itemize}
The lesson one learns from Grothendieck's construction of modern
 (commutative) algebraic geometry and some later understandings
 is that there are (at least) four ways:
\begin{itemize}
 \item[(1)] ({\it as a ringed topological space})\hspace{1em}
  a point-set $X$ with a topology on it
   together with a sheaf - with respect to that topology - of rings
   (i.e.\ the structure sheaf ${\cal O}_X$)
   that encodes the data of local function rings of $X$,
  this gives rise to Grothendieck's theory of schemes;

 \item[(2)] ({\it as a functor of points})\hspace{1em}
  fix a collection of ``basic spaces" and see how they maps to $X$;

 \item[(3)] ({\it as a base for sheaves of modules})\hspace{1em}
  instead of the ringed topological space $(X,{\cal O}_X)$ itself,
  one looks at the category $\ModCategory_X$
   of modules of the structure sheaf ${\cal O}_X$ of $X$;

 \item[(4)] ({\it as a probe})\hspace{1em}
  fix a collection of ``basic spaces" and see how $X$ maps to them.
\end{itemize}
It turns out that in commutative algebraic geometry,
 Methods (1), (2), and (3) are equivalent (cf.~[E-H], [Ha], and [Ro])
while Method (4) in general is much weaker\footnote{In
                     the realm of projective algebraic geometry,
                    Method (4) is part of Mori's Program.}
 than any of Methods (1), (2), and (3).

Methods (1) and (3) can be applied to understand ``$\Space
M_r(R)$"
 in the case $R$ is a commutative ${\Bbb C}$-algebra.
Only that it gives us back $\Spec R$ and whatever information/``geometry"
 hidden in $M_r(R)$ is completely lost;
cf.\ Morita equivalence.\footnote{{\it Remark
                    for noncommutative algebraic geometers}:
                 %%%%%%%%%%%%%%
                 % One cannot say there is anything wrong about this.
                 % However, one may still ask,
                 % ``{\it Should this be regarded
                 %   as a fundamental nature of noncommutative geometry,
                 %    i.e.\ non-isomorphic noncommutative function rings
                 %          can correspond to isomorphic geometries,
                 %   or
                 %   as an indication of the limitation of
                 %    Methods (1), (2), and (3)
                 %    to construct a noncommutative geometry?}"
                 %%%%%%%%%%%%%%%%%%%
                   In this work, we do not adopt
                    the Morita-equivalenct-type attitude toward
                    noncommutative geometry.
                   In particular, any space $X$ with
                    a noncommutative structure sheaf ${\cal O}_X^{nc}$
                    is called a `noncommutative space' in our convention
                    to emphasize this ${\cal O}_X^{nc}$.
                   We thank Lieven Le Bruyn for a remark
                    that provokes us a re-thought on our naming and
                    the foundation of noncommutative algebraic geometry.}
Method (2) can be pursued for $M_r(R)$
 and is related to smearing lower-dimensional D-branes
  in a higher-dimensional one.
We should postpone this to later part of the project.

Comparing Sec.~1.1
 on how we are led to the Polchinski-Grothendieck Ansatz,
 in which a D-brane plays the role
 of a domain of a map into an open-string target-space(-time),
one sees that: among the four methods,
 \begin{itemize}
  \item[$\cdot$]
   {\it it is the weakest Method (4) that fits best
   the purpose of D-branes}.
 \end{itemize}
Furthermore,
 \begin{itemize}
  \item[$\cdot$]
   {\it it is `morphisms therefrom' that play the central roles
    for understanding D-branes};
   whether one can truly build a satisfying/reasonable $\Space M_r(R)$
    as a ringed topological space is a secondary issue
    since one has $\Spec R$ at least.\footnote{I.e.\
                         for the purpose of D-branes,
                         one has to take
                          a carefully selected spirit of
                           Grothendieck's works
                           on commutative algebraic geometry
                          but not its full contents.}
 \end{itemize}
In other words, we don't focus on what exactly $\Space M_r(R)$ is.
Rather, we address {\it how $\Space M_r(R)$ can map to other spaces},
 through which we get a feel/sense/manifestation of the hidden geometry
 of $\Space M_r(R)$ and, hence, D-branes.
This leads us thus to the next theme.

\bigskip

\begin{flushleft}
{\bf How to construct/define a morphism without spaces?}
\end{flushleft}
The discussion in the previous theme leads us to the next question:
 \begin{itemize}
  \item[$\cdot$] {\bf Q.}\ \parbox[t]{13cm}{\it
   {\bf [morphism without spaces]}$\;$
   How to construct/define a morphism without having topological spaces
    as the domain and the target to begin with?}
 \end{itemize}
An answer to this question, which we follow, is that
 \begin{itemize}
  \item[$\cdot$] ({\it space})\hspace{2.8em}
   take the notion of a ``{\it space}" solely
    as an equivalence class $[{\cal S}]$ of gluing systems ${\cal S}$
    of (associative unital) rings,
   where an equivalence ${\cal S}\sim {\cal S}^{\prime}$
    is defined via a notion of common refinement of the systems
    via localizations of the rings;
  denote symbolically this ``space" by $\Space [{\cal S}]$;

  \item[$\cdot$] ({\it morphism})\hspace{1em}
   a {\it morphism}
    $\varphi: \Space [{\cal R}]\rightarrow \Space [{\cal S}]$
    can then be defined contravariantly
   as an equivalence class $[\varphi^{\sharp}]$ of
    gluing systems of ring-homomorphisms $\varphi^{\sharp}$
    from any representative of $[{\cal S}]$
    to fine-enough representatives of $[{\cal R}]$.
 \end{itemize}
Thus, a key issue here goes back to the old one since Ore
 at year 1931: {\it localizations of (associative unital) rings}.
This notion affects what class of morphisms $\varphi$ one can define.

This is again a standing question for noncommutative algebraic
geometers. Not to let this block our move towards D-branes, we
take in this project the simplest class of localizations: {\it
central localizations},
 i.e.\ localization by a multiplicatively closed system of elements
 (including the identity $1$ of the ring) in the center of the ring.
Thus, as long as the very technical issue of localization is concerned,
 it is the same as in the realm of commutative algebraic geometry.
This frees us
 to focus on true D-brane-related geometric themes,
 which already reveal very rich contents.
Restricted to central localizations,
the above general discussions can be polished into an encapsuled format,
 which gives Definition~2.2.2 in the next subsection
           % Definition [morphism]
 when the target-space is a usual (commutative) scheme.

\begin{example}
{\bf [D0-branes on ${\Bbb A}^1$].} {\rm
 Here we illustrate the above discussions by considering
  {\it D0-branes of rank $r$} on the (complex) affine line
  ${\Bbb A}^1=\Spec {\Bbb C}[z]$.
 Along the above line of thoughts,
 such a D-brane is described by a {\it morphism}
  $\,\varphi:(\Space (M_r({\Bbb C}),{\Bbb C}^r) \rightarrow {\Bbb A}^1\,$
  from the {\it Azumaya point} $\pt^{A\!z} := \Space(M_r({\Bbb C}))$
  with the fundamental (left-)$M_r({\Bbb C})$-module $E={\Bbb C}^r$
  to ${\Bbb A}^1$.\footnote{In
                  this example, we hiddenly specify a trivialization
                   of the fundamental module $E$
                   to make the discussion explicit throughout.
                  Rigorously speaking, one should not do so.}
 To unravel it, it is then described equivalently but contravariantly
  by a {\it ${\Bbb C}$-algebra homomorphism}
  $\varphi^{\sharp}:{\Bbb C}[z]\rightarrow M_r({\Bbb C})$.
 The latter is determined
  by an assignment $z\mapsto m_{\varphi}\in M_r({\Bbb C})$.
 The {\it image} $\varphi(\pt^{A\!z})$ of the Azumaya point $\pt^{A\!z}$
  under $\varphi$ is given by the subscheme
  $\Spec({\Bbb C}[z]/\Ker(\varphi^{\sharp}))$ of ${\Bbb A}^1$.
 The {\it Chan-Paton module/bundle/sheaf} on D-branes
  as observed by open strings in ${\Bbb A}^1$
  corresponds to the {\it push-forward} $\varphi_{\ast}E$
  of $E$ to ${\Bbb A}^1$, defined by the ${\Bbb C}[z]$-module structure
  of $E$ via $\varphi^{\sharp}$.
 In this way,
  {\it even without knowing what $\Space(M_r({\Bbb C}))$ really is,
       one can still do geometry}.
 See [L-Y3: Sec.~4.1] (D(1)). Cf.~{\sc Figure}~2-1-1.

\begin{figure}[htbp]
 \epsfig{figure=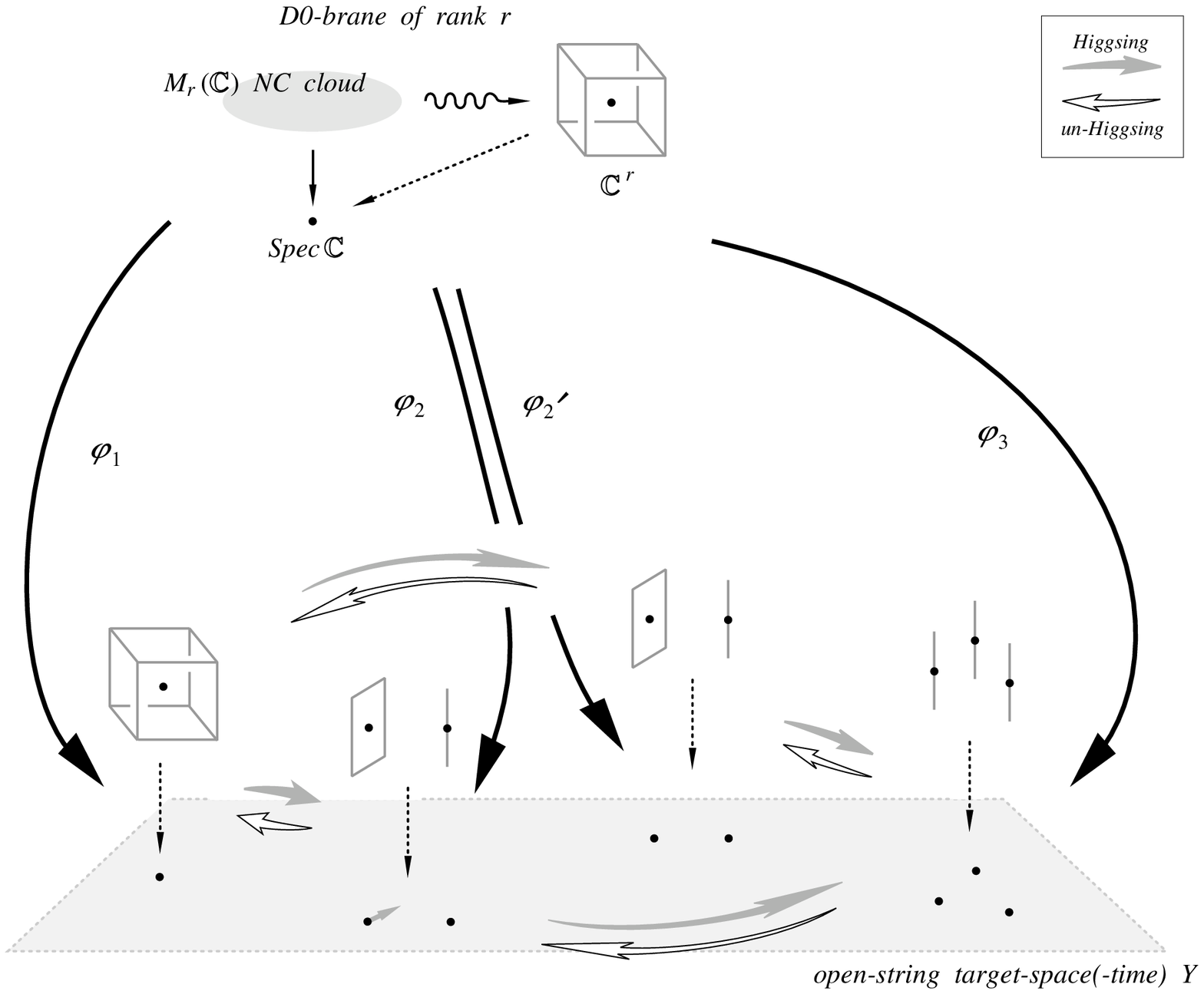,width=16cm}
 \centerline{\parbox{13cm}{\small\baselineskip 12pt
  {\sc Figure}~2-1-1.
 Despite that {\it Space}$\,M_r({\Bbb C})$ may look only
   one-point-like,
  under morphisms
  the Azumaya ``noncommutative cloud" $M_r({\Bbb C})$
  over {\it Space}$\,M_r({\Bbb C})$ can ``split and condense"
  to various image schemes with a rich geometry.
 The latter image schemes can even have more than one component.
 The Higgsing/un-Higgsing behavior of the Chan-Paton module of
   D$0$-branes on $Y$ ($={\Bbb A}^1$ in Example) occurs
  due to the fact that
   when a morphism
    $\varphi:$ {\it Space}$\,M_r({\Bbb C}) \rightarrow Y$
    deforms,
   the corresponding push-forward $\varphi_{\ast}E$
    of the fundamental module $E={\Bbb C}^r$
    on {\it Space}$\,M_r({\Bbb C})$ can also change/deform.
 These features generalize to morphisms
  from Azumaya schemes with a fundamental module to a scheme $Y$.
 Despite its simplicity, this example already hints at
  a richness of Azumaya-type noncommutative geometry.
 In the figure, a module over a scheme is indicated by
  a dotted arrow $\xymatrix{ \ar @{.>}[r] &}$.
 }}
\end{figure}
}\end{example}

\bigskip

Readers are referred to [L-Y3: Sec.~1 and Sec.~2.1] (D(1))
 for more details and related references.

\bigskip

\subsection{The four aspects of a morphism from an Azumaya scheme
    with\\ a fundamental module to a scheme.}

In this subsection, we give the four equivalent descriptions
 for a morphism
 from an Azumaya noncommutative scheme with a fundamental module
 to a (commutative) scheme.
See [L-Y3] (D(1)) and [L-L-S-Y] (D(2)) for more discussions.

\bigskip

\begin{flushleft}
{\bf I.\ The fundamental setting.}
\end{flushleft}
Let $X$ and $Y$ be schemes over ${\Bbb C}$.
We assume that both $X$ and $Y$ are projective
 for the convenience of, e.g., addressing moduli problems.
However, several basic definitions given
 do not require this condition.
Summing up all the previous considerations gives us then the
following fundamental definitions.

\begin{definition} {\bf [(commutative) surrogate].} {\rm
 Let ${\cal O}_X^{A\!z}$ be a coherent sheaf of
  (associative unital) ${\cal O}_X$-algebras on $X$,
 locally modeled on $M_r({\cal O}_U)$
  for an affine open subset $U$ of $X$.
 Let ${\cal O}_X\subset {\cal A}\subset {\cal O}_X^{A\!z}$
  be a commutative ${\cal O}_X$-subalgebra of ${\cal O}_X^{A\!z}$.
 Then $X_{\cal A}:= \boldSpec {\cal A}$ is called a (commutative)
  {\it surrogate} of $X^{\!A\!z}:=(X,{\cal O}_X^{A\!z})$.
} \end{definition}

\noindent
One should think of $X_{\cal A}$ as a finite scheme over
  and dominating $X$
 that is itself canonically dominated by $X^{\!A\!z}$.
An affine cover of $X_{\cal A}$ corresponds to
 a gluing system of algebras from central localizations.
Following this, the notion of morphisms from $X^{\!A\!z}$ to $Y$,
 as an equivalence class of gluing systems of ring-homomorphisms
  with respect to covers, can be phrased as

\begin{definition} {\bf [morphism].} {\rm
 A {\it morphism} from $X^{\!A\!z}$ to $Y$,
  in notation $\varphi: X^{\!A\!z}\rightarrow Y$,
  is an equivalence class of pairs
  $$
   ({\cal O}_X \subset {\cal A}
               \subset {\cal O}_X^{\Azscriptsize}\;,\;
     f:X_{\cal A}:=\boldSpec{\cal A}\rightarrow Y)\,,
  $$
  where
  \begin{itemize}
   \item[(1)]
    ${\cal A}$ is a commutative ${\cal O}_X$-subalgebra
     of ${\cal O}_X^{A\!z}$;

   \item[(2)]
    $f:X_{\cal A} \rightarrow Y$
    is a morphism of (commutative) schemes;

   \item[(3)]
    two such pairs
     $({\cal O}_X \subset {\cal A}_1
                \subset {\cal O}_X^{A\!z}\;,\;
      f_1:X_{{\cal A}_1}\rightarrow Y)$ and
     $({\cal O}_X \subset {\cal A}_2
                \subset {\cal O}_X^{A\!z}\;,\;
      f_2:X_{{\cal A}_2}\rightarrow Y)$
     are equivalent, in notation
     $$
      ({\cal O}_X \subset {\cal A}_1
                  \subset {\cal O}_X^{A\!z}\;,\;
       f_1:X_{{\cal A}_1}\rightarrow Y)\;
      \sim\;
      ({\cal O}_X \subset {\cal A}_2
                  \subset {\cal O}_X^{A\!z}\;,\;
           f_2:X_{{\cal A}_2}\rightarrow Y)\,,
     $$
    if there exists a third pair
     $({\cal O}_X \subset {\cal A}_3
                  \subset {\cal O}_X^{A\!z}\;,\;
      f_3:X_{{\cal A}_3}\rightarrow Y)$
     such that
      ${\cal A}_3 \subset {\cal A}_i$ and that
      the induced diagram
       \begin{eqnarray*}
       \xymatrix{
        X_{{\cal A}_i}\ar[drr]^{f_i}\ar[d] &&   \\
        X_{{\cal A}_3}\ar[rr]^{f_3}        && Y \\
        }
       \end{eqnarray*}
       commutes, for $i=1,\, 2$.
  \end{itemize}
 To improve clearness, we denote the set of pairs
  associated to $\varphi$ by the bold-faced {\boldmath $\varphi$}.
}\end{definition}

\begin{definition}
{\bf [associated surrogate, canonical presentation, and image].}
{\rm
 Let
  $$
   {\cal A}_{\varphi}\;
    =\; \cap_{({\cal O}_X\subset {\cal A} \subset {\cal O}^{A\!z},
               f:X_{\cal A}\rightarrow Y)
              \in \mbox{\scriptsize\boldmath $\varphi$}}\,
         {\cal A}\,.
  $$
 Then
  ${\cal O}_X\subset {\cal A}_{\varphi}\subset {\cal O}_X^{A\!z}$
  and there exists a unique
   $f_{\varphi}: X_{\varphi} :=\boldSpec{\cal A}_{\varphi}\rightarrow Y$
  such that the induced diagram
   \begin{eqnarray*}
    \xymatrix{
     X_{\cal A}\ar[drr]^f\ar[d]         &&    \\
     X_{\varphi}\ar[rr]^{f_{\varphi}}   && Y  \\
     }
   \end{eqnarray*}
   commutes,
   for all
    $({\cal O}_X \subset {\cal A} \subset {\cal O}_X^{A\!z},
      f:X_{\cal A}\rightarrow Y) \in$ {\boldmath $\varphi$}.
 We shall call the pair
  $$
   ({\cal O}_X \subset {\cal A}_{\varphi}
               \subset {\cal O}_X^{A\!z}\;,\;
     f_{\varphi}:
      X_{\varphi}:= \boldSpec{\cal A}_{\varphi}\rightarrow Y)\,,
  $$
  which is canonically associated to $\varphi$,
  {\it the (canonical) presentation} for $\varphi$.
 The scheme $X_{\varphi}$, which dominates $X$, is called
  the {\it surrogate of $X^{\!A\!z}$ associated to $\varphi$}.
 We will denote the built-in morphism $X_{\varphi}\rightarrow C$
  by $\pi_{\varphi}$.
 The subscheme $f_{\varphi}(X_{\varphi})$ of $Y$
  is called the {\it image} of $X^{\!A\!z}$ under $\varphi$
  and will be denoted $\Image\varphi$ or $\varphi(X^{\!A\!z})$
  interchangeably.
}\end{definition}

\begin{remark} {\it $[$minimal property of $X_{\varphi}$$\,]$.}
{\rm
 By construction,
  \begin{itemize}
   \item[$\cdot$] {\it
    there exists no ${\cal O}_X$-subalgebra
    ${\cal O}_X \subset {\cal A}^{\prime}\subset {\cal A}_{\varphi}$
    such that
     $f_{\varphi}$ factors as the composition of morphisms
     $X_{\varphi}
      \rightarrow \boldSpec{\cal A}^{\prime} \rightarrow Y$}.
  \end{itemize}
 We will call this feature
  the {\it minimal property} of the surrogate
  $X_{\varphi}$ of $X^{\!A\!z}$ associated to $\varphi$.
}\end{remark}

\begin{definition}
{\bf [Azumaya scheme with a fundamental module, morphism].} {\rm
 An {\it Azumaya scheme with a fundamental module}
  is a triple $(X,{\cal O}_X^{\!A\!z}, {\cal E})$,
  denoted also as $(X^{\!A\!z},{\cal E})$,
  where
   ${\cal E}$ is a locally free ${\cal O}_X$-module of rank $r$ and
   ${\cal O}_X^{\!A\!z}=\Endsheaf_{{\cal O}_X}({\cal E})$.
 (I.e.\ ${\cal E}$ is equipped with
        a fixed (left)-${\cal O}_X^{A\!z}$-module structure.)
 A {\it morphism} $\varphi:(X^{\!A\!z},{\cal E})\rightarrow Y$
  is simply a morphism from $X^{\!A\!z}$ to $Y$
  as defined in Definition~2.2.2.
              % Definition [morphism]
}\end{definition}

\begin{definition}
{\bf [Chan-Paton sheaf/module].}
{\rm
 Given a morphism
  $\varphi: (X^{A\!z},{\cal E}) \rightarrow Y$
  with its canonical presentation
  $({\cal O}_X \subset {\cal A}_{\varphi}
               \subset {\cal O}_X^{A\!z}\;,\;
     f_{\varphi}:X_{\varphi}\rightarrow Y)$.
 Then ${\cal E}$ is automatically a ${\cal O}_{X_{\varphi}}$-module
  in notation, $_{{\cal O}_{X_{\varphi}}}{\cal E}$.
 Define the {\it push-forward} $\varphi_{\ast}{\cal E}$
  of ${\cal E}$ to $Y$ under $\varphi$ by
  $f_{\varphi\,\ast}(_{{\cal O}_{X_{\varphi}}}{\cal E})$.
 It is a coherent ${\cal O}_Y$-module
  supported on $\Image\varphi = f_{\varphi}(X_{\varphi})$.
 We will call it also the {\it Chan-Paton sheaf or module
  on $\varphi(X^{A\!z})$ associated to ${\cal E}$ under $\varphi$}.
} \end{definition}

\begin{definition}
{\bf [isomorphism between morphisms].}
{\rm
 Two morphisms
    $\varphi_1:(X_1^{\!A\!z},{\cal E}_1)\rightarrow Y$ and
    $\varphi_2:(X_2^{\!A\!z},{\cal E}_2)\rightarrow Y$
  from Azumaya schemes with a fundamental module to $Y$
  are said to be {\it isomorphic}
 if there exists an isomorphism
   $h:X_1 \stackrel{\sim}{\rightarrow} X_2$
   with a lifting
    $\widetilde{h}:
     {\cal E}_1 \stackrel{\sim}{\rightarrow} h^{\ast}{\cal E}_2$
   such that
    \begin{itemize}
     \item[$\cdot$]
      $\widetilde{h}:
       {\cal A}_{\varphi_1}
       \stackrel{\sim}{\rightarrow} h^{\ast}{\cal A}_{\varphi_2}$,

     \item[$\cdot$]
      the following diagram commutes
      \begin{eqnarray*}
       \xymatrix{
         X_{\varphi_2}\ar[drr]^{f_{\varphi_2}}\ar[d]_{\widehat{h}}
                                                          &&      \\
         X_{\varphi_1}\ar[rr]^{f_{\varphi_1}}             && Y\; .\\
       }
      \end{eqnarray*}
    \end{itemize}
  Here,
  we denote the induced isomorphism
    ${\cal O}_{X_1}^{A\!z} \stackrel{\sim}{\rightarrow}
                             h^{\ast}{\cal O}_{X_2}^{A\!z}$
    of ${\cal O}_{X_1}$-algebras
    (or ${\cal A}_1 \stackrel{\sim}{\rightarrow} h^{\ast}{\cal A}_2$
        of their respective
        ${\cal O}_{X_{\bullet}}$-subalgebras in question)
    via
     $\widetilde{h}:{\cal E}_1
       \stackrel{\sim}{\rightarrow} h^{\ast}{\cal E}_2$
   still by $\widetilde{h}$ and
  $\widehat{h}:X_{\varphi_2}\stackrel{\sim}{\rightarrow} X_{\varphi_1}$
   is the scheme-isomorphism associated to
  $\widetilde{h}: {\cal A}_{\varphi_1}
    \stackrel{\sim}{\rightarrow} h^{\ast}{\cal A}_{\varphi_2}$.
} \end{definition}

The notion of a {\it family of morphisms} from
 (nonfixed) Azumaya schemes with a fundamental module to $Y$
 can also be defined accordingly.
We refer readers to [L-L-S-Y: Sec.~2.1].
When this fundamental setting is translated to the following three
 equivalent settings in the realm of commutative algebraic geometry,
it becomes standard how the family version should be formulated.

\bigskip

\begin{flushleft}
{\bf II.\ As a torsion sheaf on $X\times Y$.}
\end{flushleft}
The minimal property of $X_{\varphi}$ implies that
 the map $(\pi_{\varphi},f_{\varphi}):X_{\varphi}\rightarrow X\times Y$
 is indeed an embedding and $_{{\cal O}_{X_{\varphi}}}{\cal E}$ can be
 identified as a torsion sheaf $\widetilde{\cal E}$ on $X\times Y$
 that is flat over $X$ of relative length $r$.
The converse also holds:

\begin{lemma}
{\bf [Azumaya without Azumaya, morphisms without morphisms].}
 A morphism $\varphi:(X^{\!A\!z},{\cal E})\rightarrow Y$
   from an Azumaya scheme $X^{\!A\!z}$ over $X$
    with a fundamental module ${\cal E}$ of rank $r$ is given by
  a coherent ${\cal O}_{X\times Y}$-module $\widetilde{\cal E}$
   on $(X\times Y)/X$ that is flat over $X$ of relative length $r$.
\end{lemma}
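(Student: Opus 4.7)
The claim is an equivalence of two descriptions of the same datum, so I argue both directions and check they are mutually inverse. Starting from $\varphi$ with its canonical presentation $(\mathcal{O}_X\subset\mathcal{A}_\varphi\subset\mathcal{O}_X^{A\!z},\;f_\varphi\colon X_\varphi\to Y)$, form the combined map $(\pi_\varphi,f_\varphi)\colon X_\varphi\to X\times Y$. The first step is to show this is a closed embedding: the minimality in Remark~2.2.4 says $f_\varphi$ factors through no proper $\mathcal{O}_X$-subalgebra of $\mathcal{A}_\varphi$, which is equivalent to saying $\mathcal{A}_\varphi$ is generated as an $\mathcal{O}_X$-algebra by the image of $f_\varphi^{\sharp}\colon f_\varphi^{-1}\mathcal{O}_Y\to\mathcal{O}_{X_\varphi}$. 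Hence the natural surjection from $\mathcal{O}_X\boxtimes\mathcal{O}_Y$ onto $(\pi_\varphi,f_\varphi)_\ast\mathcal{A}_\varphi$ exhibits $(\pi_\varphi,f_\varphi)$ as a closed immersion. Set $\widetilde{\mathcal{E}}:=(\pi_\varphi,f_\varphi)_\ast\mathcal{E}$. Because $\mathcal{O}_X^{A\!z}$ is locally $M_r(\mathcal{O}_X)$ the fundamental module $\mathcal{E}$ is $\mathcal{O}_X$-locally free of rank $r$, and combined with $\pi_\varphi$ being finite this gives $p_{X\ast}\widetilde{\mathcal{E}}\simeq\pi_{\varphi\ast}\mathcal{E}$ locally free of rank $r$; in particular $\widetilde{\mathcal{E}}$ is flat over $X$ of relative length $r$.

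\emph{Reverse direction.} Conversely, given coherent $\widetilde{\mathcal{E}}$ on $X\times Y$ flat over $X$ with relative length $r$, the support $\mathrm{Supp}\,\widetilde{\mathcal{E}}$ is finite over $X$ (proper because $Y$ is projective, quasi-finite because of constant finite fiber length). Put $\mathcal{E}:=p_{X\ast}\widetilde{\mathcal{E}}$; flatness together with constant fiberwise length $r$ forces $\mathcal{E}$ locally free of rank $r$ by a standard cohomology-and-base-change / Nakayama argument on affine patches of $X$. Define $\mathcal{O}_X^{A\!z}:=\mathcal{E}nd_{\mathcal{O}_X}(\mathcal{E})$, so that $(X^{A\!z},\mathcal{E})$ is an Azumaya scheme with fundamental module. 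The $\mathcal{O}_{X\times Y}$-module structure on $\widetilde{\mathcal{E}}$, restricted along $p_Y^{-1}\mathcal{O}_Y\hookrightarrow\mathcal{O}_{X\times Y}$, produces a canonical ring map $\mathcal{O}_Y\to p_{X\ast}\mathcal{E}nd_{\mathcal{O}_{X\times Y}}(\widetilde{\mathcal{E}})\to\mathcal{E}nd_{\mathcal{O}_X}(\mathcal{E})=\mathcal{O}_X^{A\!z}$, the second arrow being injective because $\widetilde{\mathcal{E}}$ has finite support over $X$. Let $\mathcal{A}_\varphi\subset\mathcal{O}_X^{A\!z}$ be the $\mathcal{O}_X$-subalgebra generated by its image (commutative since $\mathcal{O}_Y$ is); then $X_\varphi:=\boldSpec\mathcal{A}_\varphi$ is finite over $X$, and the generating map yields the required $f_\varphi\colon X_\varphi\to Y$, which is minimal by construction.

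\emph{Inverse equivalence and main obstacle.} To see mutual inversion: starting from $\varphi$, the scheme-theoretic support of $(\pi_\varphi,f_\varphi)_\ast\mathcal{E}$ on $X\times Y$ is precisely the closed embedding $X_\varphi\hookrightarrow X\times Y$, and reading off the $\mathcal{O}_Y$-action on $\widetilde{\mathcal{E}}$ recovers the pair $(\mathcal{A}_\varphi,f_\varphi)$; starting from $\widetilde{\mathcal{E}}$, the subalgebra extracted from $\mathcal{O}_Y\to\mathcal{E}nd_{\mathcal{O}_X}(p_{X\ast}\widetilde{\mathcal{E}})$ is the same subalgebra used to build $X_\varphi$, and pushforward of the fundamental module from $X_\varphi$ embedded as that support recovers $\widetilde{\mathcal{E}}$. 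I expect the \emph{main obstacle} to be a pair of local verifications underpinning both directions: (a) that minimality of $\mathcal{A}_\varphi$ in Remark~2.2.4 really translates into the closed-embedding property of $(\pi_\varphi,f_\varphi)$; and (b) that $p_{X\ast}\widetilde{\mathcal{E}}$ is locally free of rank $r$ and that $p_{X\ast}\mathcal{E}nd_{\mathcal{O}_{X\times Y}}(\widetilde{\mathcal{E}})\to\mathcal{E}nd_{\mathcal{O}_X}(p_{X\ast}\widetilde{\mathcal{E}})$ is injective. Both reduce to the finite-support-over-$X$ hypothesis, but the second essentially needs the flatness assumption.
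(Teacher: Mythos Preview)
Your proposal is correct and follows essentially the same route as the paper. The paper's own argument is extremely terse: the forward direction (minimality of $X_\varphi$ forces $(\pi_\varphi,f_\varphi)$ to be an embedding, then push $\mathcal{E}$ forward) is stated in the paragraph preceding the lemma, and the proof itself only sketches the converse by recovering $\mathcal{E}=\pr_{1\ast}\widetilde{\mathcal{E}}$, taking $X_\varphi=\Supp\widetilde{\mathcal{E}}$, and reading off $\pi_\varphi,f_\varphi$ as the restricted projections---with minimality declared ``automatic'' since $X_\varphi$ sits inside $X\times Y$. Your construction of $\mathcal{A}_\varphi$ as the $\mathcal{O}_X$-subalgebra of $\Endsheaf_{\mathcal{O}_X}(\mathcal{E})$ generated by the image of $\mathcal{O}_Y$ is exactly the pushforward of the structure sheaf of the scheme-theoretic support, so the two descriptions coincide; you have simply spelled out the local verifications (closed-embedding from minimality, local freeness of $p_{X\ast}\widetilde{\mathcal{E}}$, injectivity of the endomorphism comparison map) that the paper leaves implicit.
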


\begin{proof}
 Let
  $\pr_1:X\times Y\rightarrow X$ and $\pr_2:X\times Y\rightarrow Y$
  be the projection maps.
 Then ${\cal E}$ is recovered by $\pr_{1\ast}\widetilde{\cal E}$.
  ${\cal O}_X^{A\!z}$ is thus also recovered.
 The scheme-theoretical support $\Supp(\widetilde{\cal E})$
  gives $X_{\varphi}$ with $\pi_{\varphi}$ and $f_{\varphi}$
  recovered by the restriction of $\pr_1$ and $\pr_2$ respectively.
 As $X_{\varphi}$ is already embedded in $X\times Y$,
  the minimal property for the surrogate associated to a morphism
  is automatically satisfied.

\end{proof}

This equivalent translation of the notion `morphism' to the realm of
 commutative algebraic geometry is technically important.
However, to relate to D-branes directly, it is conceptually important
 to keep the Azumaya geometry in the fundamental settings in mind.

\bigskip

\begin{flushleft}
{\bf III.\ As a map to the stack
           ${\frak M}^{\,0^{Az^f}}_{\,r}\!\!(Y)$.}
\end{flushleft}
Let ${\frak M}^{\,0^{Az^f}}_{\,r}\!\!(Y)$ be the moduli stack of
 morphisms from an Azumaya point
 with a fundamental module of rank $r$ to $Y$.
{From} Aspect~II of morphisms,
 ${\frak M}^{\,0^{Az^f}}_{\,r}\!\!(Y)$ is identical to
 the Artin stack of $0$-dimensional ${\cal O}_Y$-modules
 of length $r$.
Interpret Aspect~II of morphism as a flat family of $0$-dimensional
 ${\cal O}_Y$-module over $X$,
then a morphism $\varphi:(X^{\!A\!z},{\cal E})\rightarrow Y$
 defines a morphism
 $\phi:X\rightarrow {\frak M}^{\,0^{Az^f}}_{\,r}\!\!(Y)$;
and vice versa.

\bigskip

\begin{flushleft}
{\bf IV.\ As a $\GL_r({\Bbb C})$-equivariant map.}
\end{flushleft}
A morphism $\phi:X\rightarrow {\frak M}^{\,0^{Az^f}}_{\,r}\!\!(Y)$
 is equivalent to a morphism $\widetilde{\phi}$ of schemes
 in the commutative diagram
 $$
  \xymatrix{
   \mbox{\bf Isom}\,(\phi,\pi)
    \ar[r]^-{\widetilde{\phi}} \ar[d]_-{pr_1}
     & \mbox{\it Atlas} \ar[d]^-{\pi} \\
    X \ar[r]^-{\phi} & {\frak M}^{\,0^{Az^f}}_{\,r}\!\!(Y)
  }
 $$
 from the Isom-functor construction.
Here
 $\pi:\mbox{\it Atlas}\rightarrow {\frak M}^{\,0^{Az^f}}_{\,r}\!\!(Y)$
 is an atlas of ${\frak M}^{\,0^{Az^f}}_{\,r}\!\!(Y)$.
As the moduli stack of
  $0$-dimensional ${\cal O}_Y$-modules of length $r$,
 one can choose {\it Atlas} in the diagram to be
 the open subscheme
 $$
  \Quot^{\,H^0}\!({\cal O}_Y^{\oplus r},r)\;
   :=\;
   \{\, {\cal O}_Y^{\oplus r}
          \rightarrow \widetilde{\cal E}\rightarrow 0\,,\;
        \length\widetilde{\cal E}=r\,,\;
        H^0({\cal O}_Y^{\oplus r})
          \rightarrow H^0(\widetilde{\cal E})\rightarrow 0\,
      \}
 $$
 of the Quot-scheme $\Quot({\cal O}_Y^{\oplus r},r)$
 of isomorphism classes of $0$-dimensional quotients
 of ${\cal O}_Y^{\oplus}$ with length $r$.
The latter is known to be projective; thus $\Quot^{\,H^0}\!({\cal
O}_Y^{\oplus r},r)$ is quasi-projective
 and it parameterizes morphisms from
 an Azumaya point with a fundamental module $V$ of rank $r$
  together with a decoration
  ${\Bbb C}^r\stackrel{\sim}{\rightarrow} V$.
The $\GL_r({\Bbb C})$-action on $H^0({\cal O}_Y^{\oplus r})$
 induces a right $\GL_r({\Bbb C})$-action
 on $\Quot^{\,H^0}\!({\cal O}_Y^{\oplus r},r)$.

With this choice of atlas for ${\frak M}^{\,0^{Az^f}}_{\,r}\!\!(Y)$,
 $\pr_1:\mbox{\bf Isom}\,(\phi,\pi)\rightarrow X$ becomes a principal
  $\GL_r({\Bbb C})$-bundle $\pr:P\rightarrow X$ over $X$  and
 $\widetilde{\phi}:P\rightarrow \Quot^{\,H^0}\!({\cal O}_Y^{\oplus r},r)$
  is a $\GL_r({\Bbb C})$-equivariant morphism.

Conversely,
 given a principal $\GL_r({\Bbb C})$-bundle $P$ over $X$ and
 a $\GL_r({\Bbb C})$-equivariant morphism
 $\widetilde{\phi}:P
   \rightarrow \Quot^{\,H^0}\!({\cal O}_Y^{\oplus r},r)$.
The pullback of the universal sheaf to $(P\times Y)/X$ and
 the basic descent theory reproduce the torsion sheaf on $X\times Y$
 in Aspect~II.

\vspace{3em}
%\bigskip

\begin{flushleft}
{\bf $\Quot^{\,H^0}\!({\cal O}_Y^{\oplus r},r)$
     as the representation-theoretical atlas.}
\end{flushleft}
As Aspect~IV and $\Quot^{\,H^0}\!({\cal O}_Y^{\oplus r},r)$
 will play some role in Sec.~4, let us discuss more about it.

\begin{definition}
{\bf [representation-theoretical atlas].} {\rm
We shall call the quasi-projective scheme
 $\Quot^{\,H^0}\!({\cal O}_Y^{\oplus r},r)$
 the {\it representation-theoretical atlas} of
 ${\frak M}^{\,0^{Az^f}}_{\,r}\!\!(Y)$.
}\end{definition}

\noindent
The purpose of this theme is to explain
 why $\Quot^{\,H^0}\!({\cal O}_Y^{\oplus r},r)$ generalizes
 the notion of the `representation scheme for an algebra'
 and, hence, the above name.

\begin{lemma}
{\bf [from global to local].}
 There exists a finite cover
  $\amalg_{\alpha} U_{\alpha}\rightarrow Y$ of $Y$
  by affine open subsets
 such that any morphism $\varphi:(\pt^{A\!z},V)\rightarrow Y$
  with $V$ of rank $r$ must have its image $\varphi(\pt^{A\!z})$
  contained in some $U_{\alpha}$.
\end{lemma}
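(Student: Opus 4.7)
The plan is to exhibit the desired finite cover using sections of a very ample line bundle, reducing the statement to a quasi-compactness argument on a Hilbert scheme. First observe that the image of a morphism $\varphi:(\pt^{A\!z},V)\rightarrow Y$ with $V$ of rank $r$ is the scheme-theoretic support of the push-forward Chan--Paton sheaf $\varphi_{\ast}V$ on $Y$. Since $V$ has $\mathbb{C}$-dimension $r$ and $\varphi_{\ast}V$ is a $0$-dimensional coherent sheaf on $Y$, this image $\varphi(\pt^{A\!z})\subset Y$ is a closed subscheme of length $\le r$. Thus it suffices to produce finitely many affine open subsets $U_{1},\ldots,U_{N}$ of $Y$ such that every length-$\le r$ closed subscheme $Z\subset Y$ is contained (set-theoretically, equivalently scheme-theoretically) in some $U_{\alpha}$.

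Next, since $Y$ is projective, pick a very ample line bundle $L$ on $Y$; for every nonzero section $s\in H^{0}(Y,L)$ the open locus $Y_{s}:=Y\setminus V(s)$ is affine. The parameter space for the relevant subschemes is the Hilbert scheme $\Hilb^{\le r}(Y):=\sqcup_{i=0}^{r}\Hilb^{i}(Y)$, which is projective (hence quasi-compact) because $Y$ is. For each $s\in H^{0}(Y,L)\setminus\{0\}$ set
\[
  U_{s}\;:=\;\bigl\{\,Z\in\Hilb^{\le r}(Y)\;:\;Z\cap V(s)=\emptyset\,\bigr\}\,,
\]
which is open in $\Hilb^{\le r}(Y)$: indeed, $U_{s}$ is the complement of the image of the closed subscheme $\mathcal{Z}\cap(\Hilb^{\le r}(Y)\times V(s))$ (with $\mathcal{Z}\subset\Hilb^{\le r}(Y)\times Y$ the universal subscheme) under the proper projection to $\Hilb^{\le r}(Y)$.

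The third step is to show that the family $\{U_{s}\}_{s\in H^{0}(Y,L)\setminus\{0\}}$ covers $\Hilb^{\le r}(Y)$. Given any $Z$ of length $\le r$, its support consists of finitely many closed points $p_{1},\ldots,p_{k}\in Y$ with $k\le r$. For each $p_{i}$ the subspace $H_{p_{i}}:=\{s\in H^{0}(Y,L):s(p_{i})=0\}$ is a proper hyperplane (since $L$ is very ample and hence globally generated), so $H^{0}(Y,L)\setminus\bigcup_{i}H_{p_{i}}$ is nonempty; any $s$ in this complement satisfies $Z\cap V(s)=\emptyset$, i.e.\ $Z\in U_{s}$. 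By quasi-compactness of $\Hilb^{\le r}(Y)$, finitely many $s_{1},\ldots,s_{N}$ suffice, and the affine opens $U_{\alpha}:=Y_{s_{\alpha}}$ ($\alpha=1,\ldots,N$) then contain the image of every morphism $\varphi$ as in the statement.

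The only place that needs care is the openness of $U_{s}$ in $\Hilb^{\le r}(Y)$ and, correspondingly, the set-theoretic-vs-scheme-theoretic distinction for ``$Z\subset Y_{s}$''. The latter is harmless because $Y_{s}$ is open in $Y$, so any closed subscheme of $Y$ whose support lies in $Y_{s}$ is automatically a closed subscheme of $Y_{s}$. The former is a standard application of properness of the universal family over $\Hilb^{\le r}(Y)$. Once these two technical points are in place, the argument is just: ``compactness of the parameter space $+$ pointwise avoidance by sections of a very ample bundle $\Rightarrow$ finite cover''. No other subtlety is anticipated; in particular the argument is uniform in the choice of Azumaya structure, since only the length bound $r$ on $\varphi_{\ast}V$ is used.
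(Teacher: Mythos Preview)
Your overall strategy is sound and genuinely different from the paper's, but one claim needs correcting. You assert that $\varphi(\pt^{A\!z})$ has length $\le r$ because it is the scheme-theoretic support of the length-$r$ sheaf $\varphi_\ast V$. This fails for $r\ge 4$: the image is $\Spec A_\varphi$ with $A_\varphi$ a commutative subalgebra of $M_r(\mathbb{C})$ acting faithfully on $V$, and such subalgebras can have dimension up to $\lfloor r^2/4\rfloor+1$. Concretely, for $Y=\mathbb{P}^4$ and $r=4$, send the four affine coordinates to $E_{13},E_{14},E_{23},E_{24}\in M_4(\mathbb{C})$; all pairwise products vanish, so the image is $\Spec\bigl(\mathbb{C}[x_1,\dots,x_4]/(x_ix_j)\bigr)$, of length $5$, and your parameter space $\Hilb^{\le r}(Y)$ misses it. The fix is immediate: either replace $\Hilb^{\le r}(Y)$ by $\Hilb^{\le r^2}(Y)$ (since trivially $\dim_{\mathbb C} A_\varphi\le r^2$), or observe that the \emph{reduced} support has at most $r$ points---which is what your covering step actually uses---and run the quasi-compactness argument on the symmetric product instead.

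With that correction, your route (quasi-compactness of a parameter space plus avoidance by sections of a very ample bundle) is more conceptual than the paper's. The paper reduces to $Y=\mathbb{P}^n$ via a projective embedding and writes the cover down by hand: take $k>nr$ hyperplanes $H_\alpha$ in general position (meaning any $n+1$ of them have empty intersection) and set $U_\alpha=\mathbb{P}^n\setminus H_\alpha$; each of the $\le r$ support points lies on at most $n$ of the $H_\alpha$, so at most $nr<k$ hyperplanes meet the support, and some $U_\alpha$ contains the whole image. Your argument buys robustness (it works verbatim for any proper $Y$) at the price of invoking the Hilbert scheme; the paper's buys an explicit bound on the number of opens using nothing beyond pigeonhole.
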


\begin{proof}
 Since $Y$ is projective, we only need to prove the lemma
  for $Y={\Bbb P}^n$ for some $n$.
 In this case, take any $k$-many distinct hyperplanes
  $H_{\alpha}$, $\alpha=1,\,\ldots,\,k$, in general positions
  in ${\Bbb P}^n$ (i.e.\ any $n+1$ of them has empty intersection)
  with $k>nr$ and let $U_{\alpha}={\Bbb P}^n-H_{\alpha}$.
 The finite cover $\amalg_{\alpha} U_{\alpha}\rightarrow {\Bbb P}^n$
  of ${\Bbb P}^n$ will do.

\end{proof}

Let
 $\amalg_{\alpha} U_{\alpha}\rightarrow Y$ be such a cover of $Y$
  with $U_{\alpha}=\Spec R_{\alpha}$  and
 $\Rep(R_{\alpha},M_r({\Bbb C}))$
  be the representation-scheme
  that parameterizes ${\Bbb C}$-algebra-homomorphisms
   from $R_{\alpha}$ to $M_r({\Bbb C})$.
It follows from Sec.~2.1 that
 $\Rep(R_{\alpha},M_r({\Bbb C}))$
  is precisely the moduli space of morphisms
  from an Azumaya point with a fundamental module $V$ of rank $r$
  with a decoration ${\Bbb C}^r\stackrel{\sim}{\rightarrow} V$.
Consequently,
 $\amalg_{\alpha}\,\Rep(R_{\alpha},M_r({\Bbb C}))
    \rightarrow \Quot^{\,H^0}\!({\cal O}_Y^{\oplus r},r)$
 gives a finite cover of $\Quot^{\,H^0}\!({\cal O}_Y^{\oplus r},r)$.
In this sense, $\Quot^{\,H^0}\!({\cal O}_Y^{\oplus r},r)$
 generalizes the notion of `representation-schemes'.

When $R_{\alpha}$ is presented in a generator-relator form:
 $$
  R_{\alpha}\;=\;
  {\Bbb C}[z_{\alpha,1},\,\cdots\,z_{\alpha,k_{\alpha}}]/
    (h_{\alpha,1},\,\cdots\,h_{\alpha,l_{\alpha}})\,,
 $$
 where $h_{\alpha,j}$'s are polynomials in $z_{\alpha,i}$'s,
the representation-scheme $\Rep(R_{\alpha},M_r({\Bbb C})$
 is realized accordingly in a standard way
as an affine subscheme in ${\Bbb A}^{r^2k_{\alpha}}$
 described by the ideal generated
 by entries in a system of
 $(\mbox{\tiny\hspace{-1.6ex}$\begin{array}{c} k_{\alpha}\\
                              2\end{array}$}\hspace{-.8ex})+l_{\alpha}$
 matrix polynomials associated to
  the commutativity among $z_{\alpha,i}$'s and
  the relators $h_{\alpha,j}$.
{From} this aspect, one may think of
 $\Quot^{\,H^0}\!({\cal O}_Y^{\oplus r},r)$
 as an enhancement of $Y$,
 with the gluing law between affine charts of $Y$
  enhanced to a corresponding system of matrix gluing law
   between representation-schemes
    associated to the affine charts of $Y$.

It should be noted that
while $\Quot^{\,H^0}\!({\cal O}_Y^{\oplus r},r)$
 has such an elegant intrinsic representation-theoretical meaning,
for $Y$ of dimension $\ge 3$ and $r>\!>0$
the detail of $\Quot^{\,H^0}\!({\cal O}_Y^{\oplus r},r)$
 is beyond any means of approaching at the moment.

\bigskip

\subsection{D-branes in a ${\boldmath B}$-field background
            \`{a} la Polchinski-Grothendieck Ansatz.}

A $B$-field on a space-time $Y$ is a connection on
 a gerbe ${\cal Y}$ over $Y$.
It can be presented
 as a \v{C}ech $0$-cochain $(B_i)_i$ of local $2$-forms $B_i$
 with respect to a cover ${\cal U}=\{U_i\}_i$ on $Y$
 such that on $U_i\cap U_j$,
$B_i-B_j=d\Lambda_{ij}$ for some real $1$-forms $\Lambda_{ij}$
 that satisfies
 $\Lambda_{ij}+\Lambda_{jk}+\Lambda_{ki}=-\sqrt{-1}d\log\alpha_{ijk}$
  on $U_i\cap U_j \cap U_k$,
 where $(\alpha_{ijk})_{ijk}$ is a \v{C}eck $2$-cocycle of
  $U(1)$-valued functions on $Y$
In the algebro-geometric language,
 $(\alpha_{ijk})_{ijk}$ is given by a presentation of
 an equivalence class
 $\alpha_B\in \check{C}^2_{\et}(Y,{\cal O}_Y^{\ast})$
 of \'{e}tale \v{C}ech $2$-cocycles with values in ${\cal O}_Y^{\ast}$.
Through its coupling to the open-string current
 on an open-string world-sheet
 with boundary on a D-brane world-volume $X\subset Y$,
a background $B$-field on $Y$ induces
 a twist to the gauge field $A$
 on the Chan-Paton vector bundle $E$ on $X$
 that renders $E$ itself a twisted vector bundle
  with the twist specified by
  $\alpha_B|_X\in \check{C}^2_{\et}(X,{\cal O}_X^{\ast})$;
 e.g.\ [Hi2], [Ka1], and [Wi4].
Furthermore,
 the $2$-point functions on the open-string world-sheet
  with boundary on $X$ indicate that
 the D-brane world-volume is deformed
  to a deformation-quantization type noncommutative geometry
  in a way that is governed by the $B$-field
  (and the space-time metric);
cf.~[C-H1], [C-H2], [Ch-K], [Scho], and [S-W].

We review here
how these two effects from a $B$-field background
  on the target-space(-time) $Y$
 modify the notion
  of Azumaya schemes with a fundamental module and
  of morphisms therefrom.
More general discussions, details, and references
 are referred to [L-Y6] (D(5)).

\bigskip

\subsubsection{Nontrivial Azumaya noncommutative schemes
    with a fundamental module and morphisms therefrom.}

\begin{flushleft}
{\bf Polchinski-Grothendieck Ansatz with the \'{e}tale
     topology adaptation.}
\end{flushleft}
Recall the Polchinski-Grothendieck Ansatz in Sec.~1.1.
{For} the moment,
 let $X\hookrightarrow Y$ be an embedded submanifold of $Y$.
In the {\it smooth differential-geometric setting} of Polchinski,
 the word ``{\it locally}" in the ansatz means
 ``locally in the {\it $C^{\infty}$-topology}".
This can be generalized to adapt the ansatz to fit various settings:
 ``locally" in the {\it analytic} (resp.\ {\it Zariski}) {\it topology}
 for the {\it holomorphic} (resp.\ {\it algebro-geometric}) {\it setting}.
These are enough to study D-branes in a space(-time) without
 a background $B$-field.
The Azumaya structure sheaf ${\cal O}_X^{A\!z}$
 that encodes the matrix-type noncommutative structure on $X$
 in these cases is of the form $\Endsheaf_{{\cal O}_X}({\cal E})$
 with ${\cal E}$ the Chan-Paton module,
 a locally free ${\cal O}_X$-module of rank $r$
 on which the Azumaya ${\cal O}_X$-algebra ${\cal O}_X^{A\!z}$
 acts tautologically
 as a simple/fundamental (left) ${\cal O}_X^{A\!z}$-module.
This leads to the case reviewed in Sec.~2.2.
${\cal O}_X^{A\!z}$ in this case corresponds to the zero-class
 in the Brauer group $\Br(X)$ of $X$.

On pure mathematical ground, one can further adapt the ansatz
 for $X$ equipped with any Grothendieck topolgy/site.
On string-theoretic ground, as recalled at the beginning of
 the current subsection,
when a background $B$-field on $Y$ is turned on,
 the Chan-Paton module ${\cal E}$ on $X$ becomes twisted and
 is no longer an honest sheaf of ${\cal O}_X$-modules on $X$.
The interpretation of ``{\it locally}" in the ansatz
  in the sense of (small) {\it \'{e}tale topology} on $X$
 becomes forced upon us.
This corresponds to the case
 when the Azumaya structure sheaf ${\cal O}_X^{A\!z}$ on $X$
 represents a non-zero class in $\Br(X)$.
We will call the resulting $(X,{\cal O}_X^{A\!z})$
 a {\it nontrivial Azumaya (noncommutative) scheme}.

In this subsubsection,
we review the most basic algebro-geometric aspect of D-branes
 along the line of the {\it Polchinski-Grothendieck Ansatz}
 but {\it with} this {\it \'{e}tale topology adaptation}
  on the D-brane or D-brane world-volume.
Readers are referred
 to [L-Y6: Sec.~1] (D(5)) for a highlight on gerbes
  and (general) Azumaya algebras over a scheme;  and
 to, e.g., [Br], [Ch], [C\u{a}], [Lie], and [Mi]
  for detailed treatment.
In this review, we will confine ourselves only
 to the language of twisted sheaves.

\bigskip

\begin{flushleft}
{\bf Twisted sheaves \`{a} la C\u{a}ld\u{a}raru.}
\end{flushleft}
Given an \'{e}tale cover $p:U^{(0)}:=\amalg_{i\in I}U_i\rightarrow X$
  of $X$,
 we will adopt the following notations:
 \begin{itemize}
  \item[$\cdot$]
   $U_{ij}\, :=\, U_i\times_X U_j\, =:\, U_i\cap U_j\,$,
   $\; U_{ijk}\, :=\, U_i\times_X U_j\times_X U_k\,
                  =:\, U_i\cap U_j\cap U_k\,$;

  \item[$\cdot$]
   $\xymatrix{
     \cdots\;\ar@<1.2ex>[r] \ar@<.4ex>[r]
             \ar@<-.4ex>[r] \ar@<-1.2ex>[r]
      & U^{(2)}
        := U\times_X U\times_X U
    }$\\
   $\xymatrix{
      \hspace{10em}
      \ar@<.8ex>[rrr]^-{p_{12},\, p_{13},\, p_{23}}
           \ar[rrr] \ar@<-.8ex>[rrr]
      &&& U^{(1)}
          := U\times_X U
          \ar@<.4ex>[rr]^-{p_1,\, p_2} \ar@<-.4ex>[rr]
      && U^{(0)} \ar[r]^-p & X
     }$\\ \\
    are the projection maps from fibered products as indicated;
   the restriction of these projections maps to respectively
    $U_{ijk}$ and $U_{ij}$ will be denoted the same;

  \item[$\cdot$]
   the pull-back of an ${\cal O}_{U_i}$-module ${\cal F}_i$ on $U_i$
    to $U_{ij}$, $U_{ji}$, $U_{ijk}, \,\cdots\,$
    via compositions of these projection maps will be denoted by
   ${\cal F}_i|_{U_{ij}}$, ${\cal F}_i|_{U_{ji}}$,
   ${\cal F}_i|_{U_{ijk}}$, $\cdots\,$ respectively.
 \end{itemize}

\begin{ssdefinition}
{\bf [$\alpha$-twisted ${\cal O}_X$-module on an \'{e}tale cover of $X$].}
{\rm ([C\u{a}: Definition~1.2.1].)} {\rm
 Let $\alpha\in \check{C}^2_{\et}(X,{\cal O}_X^{\ast})$
  be a \v{C}ech $2$-cocycle in the \'{e}tale topology of $X$.
 An {\it $\alpha$-twisted ${\cal O}_X$-module on an \'{e}tale cover of}
  $X$ is a triple
  $$
   {\cal F}\; =\; ( \{U_i\}_{i\in I},\,
                    \{{\cal F}_i\}_{i\in I},\,
                    \{\phi_{ij}\}_{i,j\in I} )
  $$
  that consists of the following data
  \begin{itemize}
   \item[$\cdot$]
    an \'{e}tale cover
     $p:U:=\amalg_{i\in I}U_i\rightarrow X$ of $X$
     on which $\alpha$ can be represented as a $2$-cocycle:\\
     $$
      \alpha\; =\;
       \{\, \alpha_{ijk}\,:\,
        \alpha_{ijk} \in \Gamma( U_{ijk}, {\cal O}_X^{\ast} )\;
       \mbox{with
        $\alpha_{jkl}\alpha_{ikl}^{-1}\alpha_{ijl}\alpha_{ijk}^{-1}=1$
        on $U_{ijkl}$ for all $i,j,k,l\in I$} \,\}\,,
     $$
    such a cover will be called
     an {\it $\alpha$-admissible \'{e}tale cover} of $X$;

   \item[$\cdot$]
    ${\cal F}_i$ is a sheaf of ${\cal O}_{U_i}$-modules on $U_i$;

   \item[$\cdot$] ({\it gluing data})$\;$
    $\phi_{ij}: {\cal F}_i|_{U_{ij}} \rightarrow {\cal F}_j|_{U_{ij}}$
     is an ${\cal O}_{U_{ij}}$-module isomorphism
    that satisfies
    \begin{itemize}
     \item[(1)]
      $\phi_{ii}$ is the identity map for all $i\in I$;

     \item[(2)]
      $\phi_{ij}=\phi_{ji}^{-1}$ for all $i,j\in I$;

     \item[(3)] ({\it twisted cocycle condition})\;
      $\phi_{ki}\circ \phi_{jk}\circ \phi_{ij}$
       is the multiplication by $\alpha_{ijk}$ on ${\cal F}_i|_{U_{ijk}}$.
    \end{itemize}
  \end{itemize}
 ${\cal F}$ is said to be {\it coherent}
   (resp.\ {\it quasi-coherent}, {\it locally free})
  if ${\cal F}_i$ is a coherent
   (resp.\ quasi-coherent, locally free)
   ${\cal O}_{U_i}$-module for all $i\in I$.
 A {\it homomorphism}
   $$
    h\;:\; {\cal F}\;=\; ( \{U_i\}_{i\in I},\,
                     \{{\cal F}_i\}_{i\in I},\,
                     \{\phi_{ij}\}_{i,j\in I} )\;
      \longrightarrow\;
     {\cal F}^{\prime}\; =\; ( \{U_i\}_{i\in I},\,
                     \{{\cal F}^{\prime}_i\}_{i\in I},\,
                     \{\phi^{\prime}_{ij}\}_{i,j\in I} )
    $$
   between $\alpha$-twisted ${\cal O}_X$-modules
   on the \'{e}tale cover $p$ of $X$
  is a collection
   $\{h_i:{\cal F}_i\rightarrow {\cal F}^{\prime}_i\}_{i\in I}$,
    where $h_i$ is an ${\cal O}_{U_i}$-module homomorphism,
   such that $\phi^{\prime}_{ij}\circ h_i= h_j\circ \phi_{ij}$
    for all $i,j\in I$.
 In particular, $h$ is an {\it isomorphism} if all $h_i$ are isomorphisms.
 Denote by $\ModCategory(X,\alpha, p)$
  the category of $\alpha$-twisted ${\cal O}_X$-modules
  on the \'{e}tale cover $p:U^{(0)}\rightarrow X$ of $X$.
}\end{ssdefinition}

Given an $\alpha$-twisted sheaf ${\cal F}$ on the \'{e}tale cover
  $p:U\rightarrow X$ of $X$.
let $p^{\prime}:U^{\prime}\rightarrow X$
  be an \'{e}tale refinement of $p:U\rightarrow X$.
Then
 $\alpha$ can be represented also on $p^{\prime}:U^{\prime}\rightarrow X$
  and
 ${\cal F}$ on $p$ defines an $\alpha$-twisted ${\cal O}_X$-module
  ${\cal F}^{\prime}$ on $p^{\prime}$ via the pull-back
 under the built-in \'{e}tale cover $U^{\prime}\rightarrow U$ of $U$.
This defines an equivalence of categories:
 $$
  \ModCategory(X,\alpha,p)\;
   \longrightarrow\; \ModCategory(X,\alpha,p^{\prime})\,.
 $$
([C\u{a}: Lemma~1.2.3, Lemma~1.2.4, Remark~1.2.5].)

\begin{ssdefinition}
{\bf [$\alpha$-twisted ${\cal O}_X$-module on $X$].} {\rm
 An {\it $\alpha$-twisted ${\cal O}_X$-module on $X$}
  is an equivalence class $[{\cal F}]$
  of $\alpha$-twisted ${\cal O}_X$-modules ${\cal F}$
  on \'{e}tale covers of $X$,
  where the equivalence relation is generated by \'{e}tale refinements
   and descents by
   \"{e}tale covers of $X$ on which $\alpha$ can be represented.
 An ${\cal F}^{\prime}\in [{\cal F}]$ is called a {\it representative}
  of the $\alpha$-twisted ${\cal O}_X$-module $[{\cal F}]$.
 For simplicity of terminology,
  we will also call ${\cal F}^{\prime}$ directly
  an $\alpha$-twisted ${\cal O}_X$-module on $X$.
}\end{ssdefinition}

\noindent
Cf.\ [C\u{a}: Corollary~1.2.6 and Remark~1.2.7].

\bigskip

Standard notions of ${\cal O}_X$-modules,
 in particular
  \begin{itemize}
   \item[$\cdot$]
    the {\it scheme-theoretic support} $\Supp{\cal E}$,

   \item[$\cdot$]
    the {\it dimension} $\dimm{\cal E}$, and

   \item[$\cdot$]
    {\it flatness over a base $S$}
  \end{itemize}
 of an $\alpha$-twisted sheaf ${\cal E}$ on $X$ (or on $X/S$)
 are defined via a(ny) presentation of ${\cal E}$
 on an $\alpha$-admissible \'{e}tale cover $U\rightarrow X$.

Standard operations on ${\cal O}_{\bullet}$-modules apply to
 twisted ${\cal O}_{\bullet}$-modules
 on appropriate admissible \'{e}tale covers
 by applying the operations component by component over the cover.
These operations apply then to twisted ${\cal O}_X$-modules as well:
They are defined on representatives of twisted sheaves
 in such a way that they pass to each other
  by pull-back and descent under \'{e}tale refinements
  of admissible \'{e}tale covers.
In particular:

\begin{ssproposition}
{\bf [basic operations on twisted sheaves].}
{\rm ([C\u{a}: Proposition~1.2.10].)}
 (1)
 Let ${\cal F}$ and ${\cal G}$ be
  an $\alpha$-twisted and a $\beta$ -twisted ${\cal O}_X$-module
  respectively,
  where $\alpha,\beta\in\check{C}^2_{\et}(X,{\cal O}_X^{\ast})$.
 Then
  ${\cal F}\otimes_{{\cal O}_X}{\cal G}$ is an $\alpha\beta$-twisted
   ${\cal O}_X$-module  and
  $\Homsheaf_{{\cal O}_X}({\cal F},{\cal G})$
   is an $\alpha^{-1}\beta$-twisted ${\cal O}_X$-module.
 In particular,
  if ${\cal F}$ and ${\cal G}$ are both
   $\alpha$-twisted ${\cal O}_X$-modules,
  then $\Homsheaf_{{\cal O}_X}({\cal F},{\cal G})$
   descends to an (ordinary/untwisted) ${\cal O}_X$-module,
   still denoted by $\Homsheaf_{{\cal O}_X}({\cal F},{\cal G})$,
   on $X$.

 (2)
 Let $f:X\rightarrow Y$ be a morphism of schemes$/{\Bbb C}$
  and $\alpha\in\check{C}^2_{\et}(Y,{\cal O}_Y^{\ast})$.
 Note that
  an $\alpha$-admissible \'{e}tale cover of $Y$ pulls back to
  an $f^{\ast}\alpha$-admissible \'{e}tale over of $X$ under $f$,
 through which the pull-back and push-forward of a related
  twisted sheaf can be defined.
 If ${\cal F}$ is an $\alpha$-twisted ${\cal O}_Y$-module on $Y$,
  then $f^{\ast}{\cal F}$ is an $f^{\ast}\alpha$-twisted
   ${\cal O}_X$-module on $X$.
 If ${\cal F}$ is an $f^{\ast}\alpha$-twisted
   ${\cal O}_X$-module on $X$,
  then $f_{\ast}{\cal F}$ is an $\alpha$-twisted
   ${\cal O}_Y$-module on $Y$.
\end{ssproposition}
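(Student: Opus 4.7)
\medskip

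\noindent
\textbf{Proof proposal.} Both parts are verified by defining the operations componentwise on a suitably refined \'{e}tale cover and then checking the twisted cocycle condition directly; the subtlety lies only in ensuring everything descends to the equivalence classes of Definition~2.3.1.2.

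\emph{Plan for (1).} First pick a common \'{e}tale cover $p:U:=\amalg_i U_i \to X$ that is simultaneously $\alpha$-admissible and $\beta$-admissible (take a common refinement of an $\alpha$- and a $\beta$-admissible cover), and fix representatives
$\mathcal{F}=(\{U_i\},\{\mathcal{F}_i\},\{\phi_{ij}\})$ and
$\mathcal{G}=(\{U_i\},\{\mathcal{G}_i\},\{\psi_{ij}\})$
with triple compositions $\phi_{ki}\phi_{jk}\phi_{ij}=\alpha_{ijk}\cdot\mathrm{id}$ and $\psi_{ki}\psi_{jk}\psi_{ij}=\beta_{ijk}\cdot\mathrm{id}$. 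Define the tensor product componentwise by $(\mathcal{F}\otimes\mathcal{G})_i:=\mathcal{F}_i\otimes_{\mathcal{O}_{U_i}}\mathcal{G}_i$ with gluing $\phi_{ij}\otimes\psi_{ij}$. The triple composition factors as
$$
(\phi_{ki}\phi_{jk}\phi_{ij})\otimes(\psi_{ki}\psi_{jk}\psi_{ij})
\;=\;\alpha_{ijk}\beta_{ijk}\cdot\mathrm{id},
$$
giving the claimed $\alpha\beta$-twist. For $\Homsheaf$, define $\Homsheaf(\mathcal{F},\mathcal{G})_i:=\Homsheaf_{\mathcal{O}_{U_i}}(\mathcal{F}_i,\mathcal{G}_i)$ with gluing $\chi_{ij}(h):=\psi_{ij}\circ h\circ \phi_{ij}^{-1}=\psi_{ij}\circ h\circ\phi_{ji}$. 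The key computation is
$\phi_{ji}\circ\phi_{kj}\circ\phi_{ik}=\alpha_{ijk}^{-1}\cdot\mathrm{id}$ (inverse of the $\phi$-cocycle), so that
$$
\chi_{ki}\circ\chi_{jk}\circ\chi_{ij}(h)\;=\;(\psi_{ki}\psi_{jk}\psi_{ij})\circ h\circ(\phi_{ji}\phi_{kj}\phi_{ik})\;=\;\beta_{ijk}\alpha_{ijk}^{-1}\cdot h,
$$
yielding an $\alpha^{-1}\beta$-twist. When $\alpha=\beta$ the twist is trivial and the gluings $\chi_{ij}$ satisfy the ordinary cocycle condition, so $\Homsheaf_{\mathcal{O}_X}(\mathcal{F},\mathcal{G})$ descends by standard \'{e}tale descent to an honest $\mathcal{O}_X$-module.

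\emph{Plan for (2).} For pull-back, note that an $\alpha$-admissible \'{e}tale cover $q:V\to Y$ pulls back to an $f^{\ast}\alpha$-admissible \'{e}tale cover $V\times_Y X\to X$, with the cocycle $f^{\ast}\alpha$ literally represented by the pulled-back $\alpha_{ijk}$. Given a representative of $\mathcal{F}$ on $q$, applying $f^{\ast}$ componentwise produces modules on $V\times_Y X$ whose gluings are the pulled-back $\phi_{ij}$; the triple composition still equals $f^{\ast}\alpha_{ijk}\cdot\mathrm{id}$. For push-forward, refine the $f^{\ast}\alpha$-admissible cover of $X$ on which a representative of $\mathcal{F}$ lives to one of the form $V\times_Y X\to X$ arising from an $\alpha$-admissible cover $V\to Y$ (this is possible because $f^{\ast}\alpha$ lifts from $\alpha$), then apply $f_{\ast}$ componentwise. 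Since $f_{\ast}$ commutes with compositions of $\mathcal{O}$-module homomorphisms and with scalar multiplication by sections of $\mathcal{O}_Y^{\ast}$, the gluings again satisfy the twisted cocycle condition with twist $\alpha$.

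\emph{Well-definedness and obstacle.} Finally, one checks that each of these four operations is compatible with refinement of the chosen admissible \'{e}tale covers, so that it descends to the equivalence classes defining twisted ${\cal O}_X$-modules on $X$ in Definition~2.3.1.2; this is routine because $\otimes$, $\Homsheaf$, $f^{\ast}$, and $f_{\ast}$ all commute with pullback along the \'{e}tale morphisms used to refine covers. The only real bookkeeping point---the main (modest) obstacle---is the sign/index convention in the $\Homsheaf$ computation: keeping track that one of the two factors appears via its inverse gluing, which is precisely what produces the $\alpha^{-1}$ rather than $\alpha$ in the resulting twist.
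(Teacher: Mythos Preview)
Your proof is correct and is precisely the standard componentwise verification. Note, however, that the paper does not supply its own proof of this proposition: it is stated with a direct citation to C\u{a}ld\u{a}raru's thesis [C\u{a}: Proposition~1.2.10] and no argument is given in the text. Your write-up matches the approach in that reference.
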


\bigskip

\begin{flushleft}
{\bf Morphisms from Azumaya schemes with a twisted fundamental module.}
\end{flushleft}

\begin{ssdefinition}
{\bf [Azumaya scheme with a fundamental module].}
{\rm
 An {\it Azumaya scheme with a fundamental module in class $\alpha$}
  is a tuple
  $$
   (X^{A\!z},{\cal E})\;
    :=\; (X,\,
          {\cal O}_X^{A\!z}
            = \Endsheaf_{{\cal O}_X}({\cal E}),\,
          {\cal E})\,,
  $$
  where
   $X=(X,{\cal O}_X)$ is a (Noetherian) scheme (over ${\Bbb C}$),
   $\alpha\in \check{C}^2_{\et}(X,{\cal O}_X^{\ast})$ represents
    a class $[\alpha]\in\Br(X)\subset H^2_{\et}(X,{\cal O}_X^{\ast})$,
    and
   ${\cal E}$ is a locally-free coherent $\alpha$-twisted
    ${\cal O}_X$-module on $X$.
 A {\it commutative surrogate} of $(X^{A\!z},{\cal E})$
  is a scheme $X_{\cal A}:=\boldSpec{\cal A}$,
  where
   ${\cal O}_X\subset {\cal A}\subset \Endsheaf_{{\cal O}_X}({\cal E})$
   is an inclusion sequence of commutative ${\cal O}_X$-subalgebras
   of $\Endsheaf_{{\cal O}_X}({\cal E})$.
 Let $\pi: X_{\cal A}\rightarrow X$
  be the built-in dominant finite morphism.
 Then
  $\cal E$ is tautologically a $\pi^{\ast}\alpha$-twisted
   ${\cal O}_{X_{\cal A}}$-module on $X_{\cal A}$,
   denoted by $_{{\cal O}_{X_{\cal A}}}{\cal E}$.
 We say that $X^{A\!z}$ is an {\it Azumaya scheme of rank $r$}
   if ${\cal E}$ has rank $r$ and
  that it is a {\it nontrivial} (resp.\ {\it trivial})
   Azumaya scheme if $[\alpha]\ne 0$ (resp.\ $[\alpha]=0$).
}\end{ssdefinition}

Let
 $Y$ be a (commutative, Noetherian) scheme/${\Bbb C}$  and
 $\alpha_B\in \check{C}^2_{\et}(Y,{\cal O}_Y^{\ast})$
  be the \'{e}tale \v{C}ech cocycle associated to a fixed $B$-field
  on $Y$.

\begin{ssdefinition}
{\bf [morphism with {\boldmath $B$}-field background].}
{\rm
 Let $(X^{A\!z},{\cal E})$ be an Azumaya scheme with a fundamental module
  in the class $\alpha\in\check{C}^2_{\et}(X,{\cal O}_X^{\ast})$.
 Then,
 a {\it morphism} from $(X^{A\!z},{\cal E})$ to $(Y,\alpha_B)$,
  in notation $\varphi: (X^{A\!z},{\cal E})\rightarrow (Y,\alpha_B)$,
  is a pair
  $$
   ({\cal O}_X \subset {\cal A}_{\varphi}
               \subset {\cal O}_X^{\Azscriptsize}\;,\;
     f_{\varphi}: X_{\varphi}:=\boldSpec{\cal A}_{\varphi}
                  \rightarrow Y)\,,
  $$
  where
  \begin{itemize}
   \item[$\cdot$]
    ${\cal A}_{\varphi}$ is a commutative ${\cal O}_X$-subalgebra
    of ${\cal O}_X^{A\!z}$,

   \item[$\cdot$]
    $f_{\varphi}:X_{\varphi} \rightarrow Y$
    is a morphism of (commutative) schemes,
  \end{itemize}
  that satisfies the following properties:
  \begin{itemize}
   \item[(1)]
   ({\it minimal property of $X_{\varphi}$})$\;$
    there exists no ${\cal O}_X$-subalgebra
     ${\cal O}_X \subset {\cal A}^{\prime}\subset {\cal A}_{\varphi}$
    such that
     $f_{\varphi}$ factors as the composition of morphisms
     $X_{\varphi}
      \rightarrow \boldSpec{\cal A}^{\prime} \rightarrow Y$;

   \item[(2)]
   ({\it matching of twists on $X_{\varphi}$})$\;$
    let $\pi_{\varphi}:X_{\varphi}\rightarrow X$
     be the built-in finite dominant morphism,
    then $\pi_{\varphi}^{\ast}\alpha=f_{\varphi}^{\ast}\alpha_B$
     in $\check{C}^2_{\et}(X_{\varphi},{\cal O}_{X_{\varphi}}^{\ast})$.
  \end{itemize}
 $X_{\varphi}$ is called the
  {\it surrogate of $X^{\Azscriptsize}$ associated to $\varphi$}.
 Condition (2) implies that
  $\varphi_{\ast}{\cal E}
   := f_{\varphi\,\ast}(_{{\cal O}_{X_{\varphi}}}{\cal E})$
  is an $\alpha_B$-twisted ${\cal O}_Y$-module on $Y$,
  supported on
  $\Image(\varphi) :=\varphi(X^{A\!z})
   := f_{\varphi}(X_{\varphi})$,
   where the last is the usual scheme-theoretic image
    of $X_{\varphi}$ under $f_{\varphi}$.\footnote{In
                              other words,
                              a morphism from $(X^{A\!z},{\cal E})$
                               to $(Y,\alpha_B)$ is a usual morphism
                               $\varphi: X^{A\!z} \rightarrow Y$
                               from the (possibly nontrivial)
                               Azumaya scheme $X^{A\!z}$ to $Y$
                              subject to the twist-matching Condition (2)
                              so that $\varphi_{\ast}{\cal E}$
                               remains a twisted sheaf
                               in a way that is compatible with
                               the $B$-field background on $Y$.}

 Given two morphisms
    $\varphi_1:(X_1^{A\!z},{\cal E}_1)\rightarrow (Y,\alpha_B)$ and
    $\varphi_2:(X_2^{A\!z},{\cal E}_2)\rightarrow (Y,\alpha_B)$,
 a {\it morphism} $\varphi_1\rightarrow\varphi_2$
  from $\varphi_1$ to $\varphi_2$ is a pair $(h, \widetilde{h})$,
  where
  \begin{itemize}
   \item[$\cdot$]
    $h:X_1\rightarrow X_2$ is an isomorphism of schemes
      with $h^{\ast}\alpha_2=\alpha_1$,
      where $\alpha_i$ is the underlying class of ${\cal E}_i$
       in $\check{C}^2_{\et}(X_i,{\cal O}_{X_i}^{\ast})$;

   \item[$\cdot$]
    $\widetilde{h}:{\cal E}_1
                    \stackrel{\sim}{\rightarrow} h^{\ast}{\cal E}_2$
     be an isomorphism of twisted sheaves on $X_1$
     that satisfies
     \begin{itemize}
      \item[$\cdot$]
       $\widetilde{h}:
        {\cal A}_{\varphi_1}
        \stackrel{\sim}{\rightarrow} h^{\ast}{\cal A}_{\varphi_2}$,

      \item[$\cdot$]
       the following diagram commutes
       \begin{eqnarray*}
        \xymatrix{
          X_{\varphi_2}\ar[drr]^{f_{\varphi_2}}\ar[d]_{\widehat{h}}
                                                           &&      \\
          X_{\varphi_1}\ar[rr]^{f_{\varphi_1}}             && Y\; .\\
        }
       \end{eqnarray*}
     \end{itemize}
    Here,
    we denote both of
     the induced isomorphisms,
      $\, {\cal O}_{X_1}^{A\!z} \stackrel{\sim}{\rightarrow}
                              h^{\ast}{\cal O}_{X_2}^{A\!z}$
       and
      ${\cal A}_{\varphi_1}
        \stackrel{\sim}{\rightarrow} h^{\ast}{\cal A}_{\varphi_2}\,$,
      of ${\cal O}_{X_1}$-algebras still by $\widetilde{h}$  and
    $\widehat{h}:X_{\varphi_2}
                 \stackrel{\sim}{\rightarrow} X_{\varphi_1}$
     is the scheme-isomorphism associated to
    $\widetilde{h}: {\cal A}_{\varphi_1}
      \stackrel{\sim}{\rightarrow} h^{\ast}{\cal A}_{\varphi_2}$.
  \end{itemize}

 This defines the category $\MorphismCategory_{A\!z^f}(Y,\alpha_B)$
  of morphisms
  from Azumaya schemes with a fundamental module
  to $(Y,\alpha_B)$.
}\end{ssdefinition}

\begin{ssdefinition}
{\bf [D-brane and Chan-Paton module].} {\rm
 Following the previous Definition,
  $\varphi(X^{A\!z})$
   is called the {\it image D-brane} on $(Y,\alpha_B)$  and
  $\varphi_{\ast}{\cal E}$ the {\it Chan-Paton module/sheaf}
   on the image D-brane.
 Similarly, for {\it image D-brane world-volume}
  if $X$ is served as a (Wicked-rotated) D-brane world-volume.
}\end{ssdefinition}

\bigskip

\begin{flushleft}
{\bf Others aspects of a morphism in the twisted case.}
\end{flushleft}
The above theme gives Aspect I, `The fundamental setting',
 of a morphism.
Aspects II, III, IV of a morphism in Sec.~2.2
 can be generalized to the twisted case as well.
Moreover, there is now a new Aspect~V of a morphism:
 namely, a description in terms of a morphism
 $\breve{\varphi}:({\cal X}^{A\!z},{\cal F})\rightarrow {\cal Y}_{\alpha_B}$
   from the Azumaya ${\cal O}_X^{\ast}$-gerbe
   with a fundamental module, associated to $(X^{\!A\!z},{\cal E})$,
   to the gerbe ${\cal Y}_{\alpha_B}$, associated to $(Y,\alpha_B)$.
As we won't use them for the rest of the current work,
 their discussions are omitted.
We refer readers to [L-Y6: Sec.~2.2] (D(5)) themes:
 `Azumaya without Azumaya and morphisms without morphisms' and
`The description in terms of morphisms from Azumaya gerbes
 with a fundamental module to a target gerbe'
 for a discussion of Aspects~II and V of the twisted case.

\bigskip

\subsubsection{Azumaya quantum schemes with a fundamental module\\
               and morphisms therefrom.}

In this subsubsection, we review
 how the second effect
  - namely, the deformation quantization -
  of the background $B$-field to a smooth D-brane world-volume $X$
 can be incorporated into Azumaya geometry
  along the line of the Polchinski-Grothendieck Ansatz.
We focus on the case when the deformation quantizations that occur
 are modelled directly on that for phase spaces in quantum mechanics.
This brings in the sheaf ${\cal D}$ of differential operators
 and ${\cal D}$-modules.

\bigskip

\begin{flushleft}
{\bf Weyl algebras, the sheaf ${\cal D}$
     of differential operators, and ${\cal D}$-modules.}
\end{flushleft}
Let
 $X$ be a smooth variety over ${\Bbb C}$,
 $\Theta_X=\Dersheaf_{{\Bbb C}}({\cal O}_X,{\cal O}_X)$
  be the sheaf of ${\Bbb C}$-derivations on ${\cal O}_X$,  and
 $\Omega_X$ be the sheaf of K\"{a}hler differentials on $X$.
We recall a few necessary objects and facts for our study.
Their details are referred to
 [Bern], [Bj], and [B-E-G-H-K-M]$\,$:
\begin{itemize}
 \item[(1)]
  the {\it Weyl algebra}
  $$
    A_n({\Bbb C})\; :=\;
     {\Bbb C}\langle x_1,\,\cdots\,,\,x_n,
       \partial_1,\,\cdots\,,\,\partial_n\rangle
         /([x_i,x_j]\,,\, [\partial_i,\partial_j]\,,\,
           [\partial_i,x_j]-\delta_{ij}\, :\, 1\le i,j\le n)\,,
  $$
  which is the algebra of differential operators acting on
   ${\Bbb C}[x_1,\,\cdots\,,\,x_n]$ by formal differentiation;
  here,
    ${\Bbb C}\langle\,\cdots\,\rangle$ is
     the unital associative ${\Bbb C}$-algebra generated
      by elements $\cdots$ indicated,
    $[\;\,,\,\;]$ is the commutator,
    $\delta_{ij}$ is the Kronecker delta, and
    $(\,\cdots\,)$ is the $2$-sided ideal generated by $\cdots$ indicated;

 \item[(2)]
 the {\it sheaf ${\cal D}_X$
  of (linear algebraic) differential operators}
  on $X$,
  which is the sheaf of unital associative algebras
  that extends ${\cal O}_X$ by new generators from the sheaf $\Theta_X$;

 \item[(3)]
  {\it ${\cal D}_X$-modules}
   (or directly {\it ${\cal D}$-modules} when $X$ is understood),
   which are sheaves on $X$ on which ${\cal D}_X$ acts from the left.
\end{itemize}

\smallskip

\begin{sslemma}
{\bf [$A_n({\Bbb C})$ simple].}
 $A_n({\Bbb C})$ is a simple algebra:
  the only $2$-sided ideal therein is the zero ideal $(0)$.
\end{sslemma}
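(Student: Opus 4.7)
The strategy is the standard commutator-reduction argument: start with a nonzero two-sided ideal $I \subset A_n({\Bbb C})$, pick a nonzero element, and show that by iteratively taking commutators with the generators $x_i$ and $\partial_i$ we can produce a nonzero scalar, whence $1 \in I$ and $I = A_n({\Bbb C})$.

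\textbf{Step 1 (Normal form).} First I would record the PBW-type fact that every element of $A_n({\Bbb C})$ can be uniquely written in the form
\begin{equation*}
 P\;=\;\sum_{\alpha,\beta}c_{\alpha,\beta}\,x^{\alpha}\partial^{\beta}\,,
 \qquad c_{\alpha,\beta}\in{\Bbb C}\,,
\end{equation*}
with only finitely many nonzero $c_{\alpha,\beta}$, where $\alpha,\beta\in{\Bbb Z}_{\ge 0}^{n}$ and $x^{\alpha}\partial^{\beta}:=x_{1}^{\alpha_{1}}\cdots x_{n}^{\alpha_{n}}\partial_{1}^{\beta_{1}}\cdots\partial_{n}^{\beta_{n}}$. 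This follows by using the defining relations $[\partial_{i},x_{j}]=\delta_{ij}$ to push all $x$'s to the left of all $\partial$'s, and independence of these monomials from the standard action of $A_{n}({\Bbb C})$ on ${\Bbb C}[x_{1},\ldots,x_{n}]$.

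\textbf{Step 2 (Commutator formulas).} Next I would compute, by induction on $|\alpha|$ and $|\beta|$,
\begin{equation*}
 [\partial_{i},x^{\alpha}\partial^{\beta}]
  \;=\;\alpha_{i}\,x^{\alpha-e_{i}}\partial^{\beta}\,,\qquad
 [x_{i},x^{\alpha}\partial^{\beta}]
  \;=\;-\beta_{i}\,x^{\alpha}\partial^{\beta-e_{i}}\,.
\end{equation*}
Thus bracketing with $\partial_{i}$ (resp.\ $x_{i}$) lowers the $x$-degree (resp.\ $\partial$-degree) by one in each monomial and multiplies it by its $x_{i}$-exponent (resp.\ minus its $\partial_{i}$-exponent).

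\textbf{Step 3 (Reduction).} Given $0\neq P\in I$ in normal form, choose a monomial $x^{\alpha_{0}}\partial^{\beta_{0}}$ occurring in $P$ with $|\alpha_{0}|$ maximal among those with maximal $|\beta_{0}|$. Applying $[\partial_{1},-]$ iteratively $\alpha_{0,1}$ times kills every monomial of $x$-degree less than $|\alpha_{0}|$ in variable $x_{1}$ while multiplying the leading one by the nonzero integer $\alpha_{0,1}!\,$; repeating for $i=2,\ldots,n$ produces an element of $I$ whose monomials have $\alpha=0$ and leading $\partial$-part $\partial^{\beta_{0}}$ with nonzero coefficient. Symmetrically, bracketing with $x_{1},\ldots,x_{n}$ successively $\beta_{0,1},\ldots,\beta_{0,n}$ times kills all $\partial$'s and leaves a nonzero element of ${\Bbb C}\cdot 1$. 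Hence $1\in I$ and $I=A_{n}({\Bbb C})$, which gives the claim.

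\textbf{Main obstacle.} There is no real obstacle; the only subtlety is bookkeeping to ensure that at each commutator step the chosen \emph{leading} monomial survives with a nonzero scalar while the monomials that would obstruct it are either killed or land in a strictly smaller part of the filtration by $(|\alpha|,|\beta|)$. Choosing the leading monomial lexicographically with $|\alpha|$ prioritized after $|\beta|$, and reducing the $x$-degree first then the $\partial$-degree, handles this cleanly.
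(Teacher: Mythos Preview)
Your proof is the standard commutator-reduction argument and is correct in outline. Note that the paper does not actually prove this lemma; it is recalled as a well-known fact about Weyl algebras, with details referred to [Bern], [Bj], and [B-E-G-H-K-M], so there is no proof in the paper to compare against.

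One imprecision in Step~3 is worth fixing: after bracketing with $\partial_1,\ldots,\partial_n$ the prescribed number of times, it is not true that all surviving monomials have $\alpha=0$. A monomial $x^{\alpha}\partial^{\beta}$ with $\alpha\ge\alpha_0$ componentwise (but possibly $\alpha\ne\alpha_0$) survives as a scalar multiple of $x^{\alpha-\alpha_0}\partial^{\beta}$. What saves the argument is the \emph{final} step: after the full sequence of brackets with both the $\partial_i$'s and the $x_i$'s, a monomial $x^{\alpha}\partial^{\beta}$ survives only if $\alpha\ge\alpha_0$ and $\beta\ge\beta_0$ componentwise, and your maximality choice on $(|\beta|,|\alpha|)$ then forces $(\alpha,\beta)=(\alpha_0,\beta_0)$. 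Hence the end result is the nonzero scalar $(-1)^{|\beta_0|}\alpha_0!\,\beta_0!\,c_{\alpha_0,\beta_0}\in I$, as desired. Rewriting Step~3 to argue directly about which monomials survive the entire sequence of brackets, rather than making a claim about the intermediate stage, would make the proof airtight.
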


\smallskip

\begin{ssproposition}
{\bf [${\cal O}$-coherent ${\cal D}$-module].}
 Let ${\cal M}$ be a ${\cal D}_X$-module
  that is coherent as an ${\cal O}_X$-module.
 Then, ${\cal M}$ is ${\cal O}_X$-locally-free.
 Furthermore, in this case,
 the action of ${\cal D}_X$ on ${\cal M}$ defines a flat connection
  $\nabla:{\cal M}\rightarrow {\cal M}\otimes\Omega_X$ on ${\cal M}$
  by assigning $\nabla_{\!\xi}\,s=\xi\cdot s$
   for $s\in{\cal M}$ and $\xi\in \Theta_X$;
 the converse also holds.
 This gives an equivalence of categories:
 $$
  \left\{\rule{0em}{1.2em}
   \begin{array}{c}
    \mbox{${\cal O}_X$-coherent ${\cal D}_X$-modules}
   \end{array}
  \right\}\;
  \longleftrightarrow\;
  \left\{
   \begin{array}{l}
     \mbox{coherent locally free ${\cal O}_X$-modules}\\
     \mbox{with a flat connection}
   \end{array}
  \right\}\,.
 $$
\end{ssproposition}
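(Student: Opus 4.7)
The plan is to split the assertion into two pieces: (a) the ${\cal O}$-local-freeness of ${\cal M}$ and (b) the pair of mutually inverse constructions implementing the equivalence of categories. Piece (b) is essentially formal once (a) is available. Starting from an ${\cal O}_X$-coherent ${\cal D}_X$-module ${\cal M}$, set $\nabla_\xi s := \xi\cdot s$ for $\xi\in\Theta_X$, packaged as $\nabla:{\cal M}\to{\cal M}\otimes_{{\cal O}_X}\!\Omega_X$. The relation $\xi f-f\xi=\xi(f)$ inside ${\cal D}_X$ yields the Leibniz rule $\nabla_\xi(fs)=\xi(f)s+f\nabla_\xi s$; the identity $\xi\eta-\eta\xi=[\xi,\eta]$ inside ${\cal D}_X$ yields flatness $[\nabla_\xi,\nabla_\eta]=\nabla_{[\xi,\eta]}$. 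Conversely, given a flat connection on a locally free ${\cal E}$, the assignment $\xi\mapsto\nabla_\xi$ is a ${\Bbb C}$-linear map $\Theta_X\to \Endsheaf_{{\Bbb C}}({\cal E})$ compatible with Leibniz and brackets, which by the defining presentation of ${\cal D}_X$ (as the sub-${\cal O}_X$-algebra of $\Endsheaf_{{\Bbb C}}({\cal O}_X)$ generated by multiplications and $\Theta_X$) extends uniquely to a left ${\cal D}_X$-action. These constructions are mutually inverse on objects and manifestly functorial on morphisms, giving the equivalence once (a) is established.

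For (a), I work at a closed point $x\in X$. Since $X$ is smooth of dimension $n$, I choose \'{e}tale local coordinates $x_1,\ldots,x_n$ near $x$ with dual derivations $\partial_1,\ldots,\partial_n$, and write $R={\cal O}_{X,x}$, $M={\cal M}_x$, and $m\subset R$ for the maximal ideal. As $M$ is finitely generated over the Noetherian local ring $R$, Nakayama's lemma yields lifts $s_1,\ldots,s_k\in M$ of a ${\Bbb C}$-basis of $M/mM$ that generate $M$; I claim the $s_i$'s are in fact $R$-free. Suppose not, pick a nontrivial relation $\sum_i f_i s_i=0$, and let $d:=\min_i \ord_m(f_i)<\infty$. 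If $d=0$ then some $f_i$ is a unit, contradicting independence mod $m$. If $d\geq 1$, fix $i_0$ attaining the minimum and pick $\ell$ so that $\partial_\ell f_{i_0}$ has order exactly $d-1$ --- possible because the image of $f_{i_0}$ in $m^d/m^{d+1}$ is a nonzero polynomial in $x_1,\ldots,x_n$ on which some $\partial_\ell$ surjects to degree $d-1$. Applying $\partial_\ell$ to the relation and using $\partial_\ell s_i=\sum_j g_{ij}^{(\ell)}s_j\in M$ (the $s_j$'s generate) yields
$$
 \sum_j\Bigl(\partial_\ell f_j\;+\;\sum_i f_i g_{ij}^{(\ell)}\Bigr)\, s_j\;=\;0\,,
$$
whose $j=i_0$ coefficient has order exactly $d-1$ (the correction $\sum_i f_i g_{ij}^{(\ell)}$ lies in $m^d$). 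Descending induction on $d$ drives the minimum order to $0$, yielding the promised contradiction. Local freeness on a Zariski neighborhood of $x$ is then obtained by lifting a basis of the free stalk to sections on an open $U\ni x$ on which the presentation determinant is invertible.

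The main obstacle is precisely this inductive step: guaranteeing for each $d\geq 1$ that one can choose $\partial_\ell$ strictly lowering the order of some minimum-order coefficient \emph{and} that the correction $\sum_i f_i g_{ij}^{(\ell)}$ arising from the ${\cal D}$-action on the $s_i$'s does not cancel the drop. Both points rest on the \'{e}tale coordinates at $x$: the graded piece $m^d/m^{d+1}$ is faithfully modelled on homogeneous degree-$d$ polynomials in $x_1,\ldots,x_n$, so at least one $\partial_\ell$ is nonzero on it, while the correction term stays in $m^d$ (being a sum of products of order-$\geq d$ by order-$\geq 0$ functions) and cannot interfere with the leading term in $m^{d-1}/m^d$. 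With local freeness in hand at every closed point, spreading it to an open cover of $X$ is routine, and the equivalence of categories described above then follows.
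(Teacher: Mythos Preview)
Your proof is correct. The paper itself does not supply a proof of this proposition: it is stated as a standard fact from the theory of ${\cal D}$-modules, with the reader referred to [Bern], [Bj], and [B-E-G-H-K-M] for details. What you have written is essentially the classical argument found in those references --- the descent-on-order trick exploiting that a ${\cal D}$-module admits derivations that strictly lower the $m$-adic order of any nonzero coefficient in a dependence relation (crucially using characteristic zero so that a nonzero homogeneous form in $m^d/m^{d+1}$ has some nonvanishing partial), combined with Nakayama to conclude freeness at each closed point. Part (b), the dictionary between the ${\cal D}_X$-action and a flat connection via the defining relations $[\xi,f]=\xi(f)$ and $[\xi,\eta]=[\xi,\eta]_{\Theta_X}$, is exactly the standard formal construction.

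One small wording quibble: ``some $\partial_\ell$ surjects to degree $d-1$'' is awkward; you mean that the induced map $\partial_\ell: m^d/m^{d+1}\to m^{d-1}/m^d$ does not annihilate the leading form of $f_{i_0}$, which in characteristic zero is automatic for at least one $\ell$. Otherwise the argument is clean and complete.
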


\bigskip

\begin{flushleft}
{\bf ${\cal D}$ as the structure sheaf of
     the deformation quantization of the cotangent bundle.}
\end{flushleft}
{From} the presentation of the Weyl algebra $A_n({\Bbb C})$,
 which resembles the quantization of a classical phase space
  with the position variable $(x_1,\,\cdots\,,\,x_n)$ and
  the dual momentum variable
   $(p_1,\,\cdots\,,\,p_n)=(\partial_1,\,\cdots\,,\,\partial_n)$,
  and
the fact that
 ${\cal D}_X$ is locally modelled on the pull-back of
 $A_n({\Bbb C})$ over ${\Bbb A}^n$ under an \'{e}tale morphism
 to ${\Bbb A}^n$,
the sheaf ${\cal D}_X$ of algebras with the built-in inclusion
 ${\cal O}_X\subset {\cal D}_X$ can be thought of
 as the structure sheaf of a noncommutative space
 from the quantization\footnote{The word
                         ``quantization" has received various meanings
                          in mathematics. Here, we mean solely the
                          one associated to quantum mechanics.
                          This particular quantization is also called
                           {\it deformation quantization}.}
 of the cotangent bundle, i.e.\ the total space
 $\mbox{\boldmath $\Omega$}_X$ of the sheaf $\Omega_X$, of $X$.

\smallskip

\begin{ssdefinition}
{\bf [canonical deformation quantization of cotangent bundle].}
{\rm
 We will formally denote this noncommutative space by
  $\Space {\cal D}_X =: Q\mbox{\boldmath $\Omega$}_X$ and
 call it
  the {\it canonical deformation quantization}
  of $\mbox{\boldmath $\Omega$}_X$.
}\end{ssdefinition}

\smallskip

A special class of {\it morphisms} from or to $\Space {\cal D}_X$
 can be defined contravariantly
 as homomorphisms of sheaves of ${\Bbb C}$-algebras.

\smallskip

\smallskip

\begin{ssexample}
{\bf [$A_n({\Bbb C})$].} {\rm
 The noncommutative space $\Space (A_n({\Bbb C}))$
  defines a deformation quantization of
  $\mbox{\boldmath $\Omega$}_{{\Bbb A}^n}$.
 Recall the presentation of $A_n({\Bbb C})$.
 The ${\Bbb C}$-algebra homomorphism
  $$
   \begin{array}{cccccl}
    f_{(k)}^{\sharp} & :
     & {\Bbb C}[y_1,\,\cdots\,,\,y_n]
     & \longrightarrow     & A_n({\Bbb C}) \\[.6ex]
    && y_i & \longmapsto   & x_i\,,        & i=1,\,\ldots\,,\,k, \\[.6ex]
    && y_j & \longmapsto   & \partial_j\,, & j=k+1,\,\ldots\,,\, n\,,
   \end{array}
   $$
 defines a dominant morphism
  $f_{(k)}:\Space (A_n({\Bbb C}))\rightarrow {\Bbb A}^n$,
  $k=0,\,\ldots\,,\,n$.
 The ${\Bbb C}$-algebra automorphism
  $A_n({\Bbb C})\rightarrow A_n({\Bbb C})$ with
   $x_i\mapsto \partial_i$ and $\partial_i\mapsto -x_i$
   defines the {\it Fourier transform} on $\Space (A_n({\Bbb C}))$.
 Note that, since $A_n({\Bbb C})$ is simple,
  any morphisms to $\Space (A_n({\Bbb C}))$ is dominant
  (i.e.\ the related ${\Bbb C}$-algebra homomorphism
   from $A_n({\Bbb C})$ is injective).
}\end{ssexample}

\bigskip

\begin{flushleft}
{\bf $\alpha$-twisted ${\cal O}_X$-coherent ${\cal D}_X$-modules and
     enlargements of ${\cal O}_X^{A\!z}$ by ${\cal D}_X$.}
%%%%%%%%%%%%%
% {\bf $\alpha$-twisted locally free ${\cal O}_X$-modules
%      with a flat connection and
%     enlargements of ${\cal O}_X^{A\!z}$ by ${\cal D}_X$.}
%%%%%%%%%%%%%%
\end{flushleft}
Let
 $\alpha\in \check{C}_{\et}(X,{\cal O}_X^{\ast})$  and
 $\,{\cal F}\; =\; ( \{U_i\}_{i\in I},\,
                   \{{\cal F}_i\}_{i\in I},\,
                   \{\phi_{ij}\}_{i,j\in I} )\,$
  be an $\alpha$-twisted ${\cal O}_X$-module.

\smallskip
\begin{ssdefinition}
{\bf [connection on ${\cal F}$].} {\rm
 A {\it connection} $\nabla$ on ${\cal F}$
  is a set $\{\nabla_i\}_{i\in I}$
  where
   $\nabla_i: {\cal F}_i \rightarrow
      {\cal F}_i\otimes_{{\cal O}_{U_i}}\Omega_{U_i}$
   is a connection on ${\cal F}_i$,
 that satisfies
  $\phi_{ij}\circ (\nabla_i|_{U_{ij}})
    = (\nabla_j|_{U_{ij}})\circ \phi_{ij}$.
 $\nabla$ is said to be {\it flat}
  if $\nabla_i$ is flat for all $i\in I$.
}\end{ssdefinition}

\smallskip

\noindent
Note that the existence of an $\alpha$-twisted ${\cal O}_X$-module
 with a connection imposes
 a condition on $\alpha$ that $\alpha$ has a presentation
 $(\alpha_{ijk})_{ijk}$ with
 $d\alpha := (d\alpha_{ijk})_{ijk}=(0)_{ijk}$;
    i.e.\ $\alpha_{ijk}\in{\Bbb C}^{\ast}$ for all $i,j,k$.

As the proof of Proposition~2.3.2.2 is local,
              % Proposition [${\cal O}$-coherent ${\cal D}$-module]
 it generalizes to
 $\alpha$-twisted ${\cal O}_X$-coherent ${\cal D}_X$-modules$\,$:

\smallskip

\begin{ssproposition}
{\bf [$\alpha$-twisted ${\cal O}$-coherent ${\cal D}$-module].}
 Let ${\cal M}$ be a ${\cal D}_X$-module
  that is $\alpha$-twisted ${\cal O}_X$-coherent.
 Then, ${\cal M}$ is an $\alpha$-twisted ${\cal O}_X$-locally-free.
 Furthermore, in this case,
 the action of ${\cal D}_X$ on ${\cal M}$ defines a flat connection
  $\nabla:{\cal M}\rightarrow {\cal M}\otimes\Omega_X$ on ${\cal M}$
  by assigning $\nabla_{\!\xi}\,s=\xi\cdot s$
   for $s\in{\cal M}$ and $\xi\in \Theta_X$;
 the converse also holds.
 This gives an equivalence of categories:
 $$
  \left\{\rule{0em}{1.2em}
   \begin{array}{c}
    \mbox{$\alpha$-twisted ${\cal O}_X$-coherent ${\cal D}_X$-modules}
   \end{array}
  \right\}\;
  \longleftrightarrow\;
  \left\{
   \begin{array}{l}
     \mbox{$\alpha$-twisted coherent locally free ${\cal O}_X$-}\\
     \mbox{modules with a flat connection}
   \end{array}
  \right\}\,.
 $$
\end{ssproposition}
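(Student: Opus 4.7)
The plan is to reduce the statement to the already-established untwisted case (Proposition~2.3.2.2) by working on an $\alpha$-admissible \'{e}tale cover and checking that the resulting structures are compatible with the twisted gluing. Since ${\cal D}_X$ is an honest (untwisted) sheaf on the small \'{e}tale site of $X$, for any $\alpha$-admissible \'{e}tale cover $p:U^{(0)}=\amalg_{i\in I}U_i\rightarrow X$ the pull-back ${\cal D}_{U_i}$ acts componentwise on any representative $(\{U_i\},\{{\cal M}_i\},\{\phi_{ij}\})$ of an $\alpha$-twisted ${\cal O}_X$-coherent ${\cal D}_X$-module ${\cal M}$, and the twisted gluing data $\phi_{ij}$ will automatically become ${\cal D}_{U_{ij}}$-linear isomorphisms satisfying $\phi_{ki}\circ\phi_{jk}\circ\phi_{ij}=\alpha_{ijk}\cdot\mathrm{id}$.

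For the forward direction, I would apply Proposition~2.3.2.2 to each ${\cal M}_i$: each is an ${\cal O}_{U_i}$-coherent ${\cal D}_{U_i}$-module, hence locally free with an associated flat connection $\nabla_i:{\cal M}_i\rightarrow {\cal M}_i\otimes\Omega_{U_i}$ defined by $\nabla_{i,\xi}s=\xi\cdot s$ for $\xi\in\Theta_{U_i}$. Local freeness over each $U_i$ is precisely what it means for ${\cal M}$ to be $\alpha$-twisted ${\cal O}_X$-locally free. To verify that $\{\nabla_i\}$ assembles into a connection on ${\cal M}$ in the sense of Definition~2.3.2.5, I would use that the ${\cal D}$-linearity of $\phi_{ij}$ forces $\phi_{ij}\circ(\nabla_i|_{U_{ij}})=(\nabla_j|_{U_{ij}})\circ\phi_{ij}$, and flatness is a local condition checked componentwise.

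For the converse, starting from an $\alpha$-twisted locally free $(\{U_i\},\{{\cal F}_i\},\{\phi_{ij}\})$ with a flat connection $\nabla=\{\nabla_i\}$, I would promote each ${\cal F}_i$ to a ${\cal D}_{U_i}$-module by declaring $\xi\cdot s:=\nabla_{i,\xi}s$ for $\xi\in\Theta_{U_i}$ and extending by the existing ${\cal O}_{U_i}$-action; flatness of $\nabla_i$ ensures that the defining relations of ${\cal D}_{U_i}$ are respected. The intertwining of $\phi_{ij}$ with the $\nabla_i$'s makes $\phi_{ij}$ automatically ${\cal D}_{U_{ij}}$-linear, yielding the requisite $\alpha$-twisted ${\cal O}_X$-coherent ${\cal D}_X$-module. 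The two constructions are visibly inverse to one another, and functoriality on morphisms (required to be ${\cal D}$-linear componentwise) is immediate.

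The main point requiring care---already hidden in the definitions---is the compatibility of the twisting cocycle with the ${\cal D}_X$-action. Since $\phi_{ki}\circ\phi_{jk}\circ\phi_{ij}=\alpha_{ijk}\cdot\mathrm{id}$ must be ${\cal D}_{U_{ijk}}$-linear and ${\cal D}_{U_{ijk}}$ contains $\Theta_{U_{ijk}}$, derivations must annihilate $\alpha_{ijk}$, forcing $d\alpha_{ijk}=0$, i.e.\ $\alpha_{ijk}\in{\Bbb C}^{\ast}$. This matches the remark following Definition~2.3.2.5: a twisted ${\cal D}$-module can only exist for Brauer classes admitting a flat (locally constant) representative, and within any such class the untwisted Proposition~2.3.2.2 lifts directly to the twisted setting as claimed.
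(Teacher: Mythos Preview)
Your proposal is correct and follows exactly the paper's approach: the paper states only that ``as the proof of Proposition~2.3.2.2 is local, it generalizes to $\alpha$-twisted ${\cal O}_X$-coherent ${\cal D}_X$-modules,'' and you have simply spelled out what this local-to-global reduction entails, including the observation (matching the paper's remark after Definition~2.3.2.5) that the twisting cocycle must be locally constant.
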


\smallskip

Let ${\cal E}$ be an $\alpha$-twisted ${\cal O}_X$-coherent
  ${\cal D}_X$-module.
Then the ${\cal D}_X$-module structure on ${\cal E}$ induces
  a natural ${\cal D}_X$-module structure on the (ordinary)
  ${\cal O}_X$-module
  ${\cal O}_X^{A\!z} := \Endsheaf_{{\cal O}_X}({\cal E})$.
We will denote both the connection on ${\cal E}$ and
 on ${\cal O}_X^{A\!z}$ by $\nabla$.
As
 both ${\cal O}_X^{A\!z}:=\Endsheaf_{{\cal O}_X}({\cal E})$ and
  ${\cal D}_X$ act now on ${\cal E}$  and
 ${\cal D}_X$ acts also on ${\cal O}_X^{A\!z}$,
one can define a sheaf ${\cal O}_X^{A\!z,{\cal D}}$ of unital
 associative algebras generated by ${\cal O}_X^{A\!z}$ and ${\cal D}_X$
 as follows:
 \begin{itemize}
  \item[$\cdot$]
   Over a (Zariski) open subset $U$ of $X$,
    ${\cal O}_X^{A\!z,{\cal D}}(U)$ is the unital associative
    ${\Bbb C}$-algebra generated by
    ${\cal O}^{A\!z}_X(U)\cup {\cal D}_X(U)$
   subject to the following rules$\,$:
    \begin{itemize}
     \item[(1)]
      for $\phi_1,\,\phi_2\in {\cal O}_X^{A\!z}(U)$,
      $\phi_1\cdot\phi_2\in {\cal O}_X^{A\!z,{\cal D}}(U)$
       coincides with the existing
       $\phi_1\phi_2\in {\cal O}_X^{A\!z}(U)\,$;

     \item[(2)]
      for $\eta_1,\,\eta_2\in {\cal D}_X(U)$,
      $\eta_1\cdot\eta_2\in {\cal O}_X^{A\!z,{\cal D}}(U)$
       coincides with the existing
       $\eta_1\eta_2\in{\cal D}_X(U)\,$;

     \item[(3)] ({\it Leibniz rule})\hspace{1em}
      for $\phi\in {\cal O}_X^{A\!z}(U)$ and
           $\xi\in\Theta_X(U)\subset {\cal D}_X(U)$,
       $$
        \xi\cdot\phi\; =\; (\nabla_{\!\xi}\,\phi)\,+\, \phi\cdot \xi\,.
       $$
    \end{itemize}
 \end{itemize}
In notation,
 ${\cal O}_X^{A\!z,{\cal D}}
    := {\Bbb C}\langle {\cal O}_X^{A\!z},{\cal D}_X\rangle^{\nabla}$.

\smallskip

\begin{ssdefinition}
{\bf [Azumaya quantum scheme with fundamental module].} {\rm
The noncommutative space
 $$
  (X^{A\!z,{\cal D}},\,{\cal E}^{\nabla})\;
  :=\; (X,\,
       {\cal O}_X^{A\!z,{\cal D}}
       = {\Bbb C}\langle
          \Endsheaf_{{\cal O}_X}({\cal E}), {\cal D}_X\rangle^{\nabla},\,
       ({\cal E},\nabla) )
 $$
 will be called an {\it Azumaya quantum scheme with a fundamental module
 in the class $\alpha$}.
}\end{ssdefinition}

\smallskip

\noindent
Caution that ${\cal O}_X\subset {\cal O}_X^{A\!z,{\cal D}}$
 in general does not lie in the center of ${\cal O}_X^{A\!z,{\cal D}}$.

\smallskip

\begin{ssremark}
{$[\,$${\cal E}^{\nabla}$ as a module
      over $\Space({\cal O}_X^{A\!z,{\cal D}}) $$\,]$.}
{\rm
 The full notation for $X^{A\!z,{\cal D}}$ in Definition~5.1.7
     % Definition [Azumaya quantum scheme with fundamental module]
   is meant to make two things manifest:
   \begin{itemize}
    \item[(1)]
     There is a built-in diagram of dominant morphisms of $X$-spaces$\,$:
      $$
       \xymatrix @R=1em @C=-1em {
        & X^{A\!z,{\cal D}} := \Space {\cal O}_X^{A\!z,{\cal D}}
          \ar[ld] \ar[rd] \ar[dd] & \\
        **[l]X^{A\!z} := \Space {\cal O}_X^{A\!z} \ar[rd]
         && **[r]Q\mbox{\boldmath $\Omega$}_X := \Space {\cal D}_X\,.
                 \ar[ld] \\
        & X  &
       }
      $$
     $\Space {\cal O}_X^{A\!z,{\cal D}}$ is the major space
        one should focus on.
     The other three spaces
       - $\Space {\cal O}_X^{A\!z}$, $\Space {\cal D}_X$, and $X$ -
      should be treated as auxiliary spaces that are built into
      the construction to encode a special treatment
      that takes care of the issue of
      localizations of noncommutative rings in the current situation;
      cf.\ the next item.

     \item[(2)]
      Despite the fact that ${\cal O}_X$ is in general not
             in the center of ${\cal O}_X^{A\!z,{\cal D}}$,
       there is a notion of localization and open sets on
       $\Space {\cal O}^{A\!z,{\cal D}}$ induced by those on $X$.
      I.e.\ $\Space{\cal O}_X^{A\!z,{\cal D}}$ has a built-in topology
       induced from the (Zariski) topology of $X$.
      Thus, one can still have the notion of
       {\it gluing systems of morphisms} and {\it sheaves}
       with respect to this topology.
   \end{itemize}
  In particular, ${\cal E}^{\nabla}$ is a sheaf of
   ${\cal O}_X^{A\!z,{\cal D}}$-modules
   supported on the whole $\Space{\cal O}^{A\!z,{\cal D}}$
   with this topology.
}\end{ssremark}

\smallskip

\begin{ssremark}
{$[\,$Azumaya algebra over ${\cal D}_X$$\,]$.}
{\rm
 Note that ${\cal O}_X^{A\!z,{\cal D}}$ can also be thought of
  as an {\it Azumaya algebra over ${\cal D}_X$}
  in the sense that it is a sheaf of algebras on $X$,
  locally modelled on the matrix ring
   $M_r({\cal D}_U)$ over ${\cal D}_U$
  for $U$ an affine \'{e}tale-open subset of $X$.
}\end{ssremark}

\smallskip

\begin{ssremark}
{\it $[\,$partially deformation-quantized target$\,]$.} {\rm
 {From} the fact that Weyl algebras are simple,
  it is anticipated that a morphism to
  a totally deformation-quantized space $Y=\mbox{\boldmath $\Omega$}_W$
  is a dominant morphism.
 In general, one may take $Y$ to be a partial deformation quantization
  of a space along a foliation.
 E.g.\  a deformation quantization of {\boldmath $\Omega$}$_{W/B}$
  along the fibers of a fibration $W/B$.
 For compact $Y$, one may consider
  the deformation quantization along torus fibers
  of a space fibered by even-dimensional tori.\footnote{Though
                                   we do not touch this here,
                                   readers should be aware that
                                    this is discussed in numerous
                                    literatures.}
 (Cf.~Example~2.3.2.11.)
    % Example [Higgsing/un-Higgsing of D-brane]
}\end{ssremark}

\bigskip

\begin{flushleft}
{\bf Higgsing and un-Higgsing of quantum D-branes
     via deformations of morphisms.}
\end{flushleft}
We give here an example of morphisms from $X$ with the new structure
 to a target-space $Y$ being the total space {\boldmath $\Omega$}$_W$
 of the cotangent bundle $\Omega_W$ of a smooth variety $W$.
It illustrates also the Higgsing/un-Higgsing behavior of D-branes
 in the current deformation-quantized situation.

\begin{ssexample}
{\bf [Higgsing/un-Higgsing of D-brane].} {\rm
 Let
  $(X^{A\!z,{\cal D}},{\cal E}^{\nabla})$
   be the affine Azumaya quantum scheme with a fundamental module
   associated to
    the ring
     $R:= {\Bbb C}\langle M_2({\Bbb C}[z]), \partial_z\rangle$
     (with the implicit relation $[\partial_z,z]=1$ and
           the identification of ${\Bbb C}[z]$
                   with the center of $M_2({\Bbb C}[z])$)  with
    the $R$-module $N:={\Bbb C}[z]\oplus {\Bbb C}[z]$,
     on which $M_2({\Bbb C}[z])$ acts by multiplication and
      $\partial_z$ acts by formal differentiation,  and
  $Y$ be the partially deformation-quantized space
   $Q_{\lambda}\mbox{\boldmath $\Omega$}_{{\Bbb A}^2/{\Bbb A}^1}$
   associated to the ring
   $S_{\lambda} :=
    {\Bbb C}\langle u,v,w \rangle/([v,w], [u,v], [u,w]-\lambda)$,
    where $\lambda\in{\Bbb C}$.
 Note\footnote{Also, we take the convention that
                $\partial_z\cdot m$ means the product in
                 ${\Bbb C}\langle M_2({\Bbb C}[z]), \partial_z\rangle$
                 and
                $\partial_z m$ means entry-wise formal differentiation
                 of $m$, for $m\in M_2({\Bbb C}[z])$.}
  that the action of $\partial_z$ on $N$
  induces an action of $\partial_z$ on $M_2({\Bbb C}[z])$
  by the entry-wise formal differentiation and
 the ${\Bbb A}^2/{\Bbb A}^1$ corresponds to
  ${\Bbb C}[v]\hookrightarrow{\Bbb C}[v,w]$.
 Consider the following special class of morphisms:
  $$
   \hspace{10em}
   \begin{array}{ccc}
    X & \xymatrix{\ar[rrr]^-{\varphi_{(A,B)}} &&&}          & Y \\[.6ex]
    R & \xymatrix{&&& \ar[lll]_-{\varphi_{(A,B)}^{\sharp}}}
      & S_{\lambda}\\[.6ex]
    \lambda\partial_z + A
      & \xymatrix{&&& \ar @{|->}[lll]} & u \\[.6ex]
    B & \xymatrix{&&& \ar @{|->}[lll]} & v \\[.6ex]
    z & \xymatrix{&&& \ar @{|->}[lll]} & w \\[.6ex]
   \end{array}\,,\hspace{2em}
   \mbox{$A,\, B\;\in\; M_2({\Bbb C}[z])\,$,}
  $$
  subject to
  $[\lambda\partial_z+A, B]\,=\,0\,$.
  (The other two constraints,
   $[B,z]\,=\,0\,$ and $\,[\lambda\partial_z+A, z]-\lambda\,=\,0\,$,
  are automatic.)
 Let
  $$
   A\; =\;
    \left[\begin{array}{cc} a_1 & a_2 \\ a_3 & a_4 \end{array}\right]
   \hspace{2em}\mbox{and}\hspace{2em}
   B\; =\;
    \left[\begin{array}{cc} b_1 & b_2 \\ b_3 & b_4 \end{array}\right]\,,
  $$
  where $a_i$, $b_j\in {\Bbb C}[z]$ and assume that $\lambda\ne 0$.
 Then, the associated system
   $\lambda\partial_zB+[A,B]=0$
   of homogeneous linear ordinary differential equations on $B$
   has a solution
  if and only if $A$ satisfies
   $$
    (a_1-a_4)^2 + 4 a_2a_3\;=\;0\,.
   $$
  Under this condition on $A$, the system has four fundamental solutions:
   $$
    \begin{array}{lcl}
    B_1  & =
     & \left[ \begin{array}{llll}
             1+\lambda^{-2}a_2a_3z^2
              &&& \lambda^{-1}a_2z
                  -\frac{1}{2}\lambda^{-2}(a_1-a_4)a_2 z^2
                  \hspace{2.3ex}  \\[.6ex]
             -\lambda^{-1}a_3z -\frac{1}{2}\lambda^{-2}(a_1-a_4)a_3 z^2
              &&& -\lambda^{-2}a_2a_3 z^2
            \end{array}
       \right]\,, \\[4ex]
    B_2  & =
     & \left[ \begin{array}{lll}
             \lambda^{-1}a_3z - \frac{1}{2}\lambda^{-2}(a_1-a_4)a_3 z^2
              && \hspace{1.6em}
                 1 - \lambda^{-1}(a_1-a_4)z -\lambda^{-2}a_2a_3 z^2\\[.6ex]
             - \lambda^{-2}a_3^2 z^2
               && \hspace{1.6em}
                  -\lambda^{-1}a_3z +\frac{1}{2}\lambda^{-2}(a_1-a_4)a_3z^2
            \end{array}
       \right]\,, \\[4ex]
    B_3  & =
     & \left[ \begin{array}{lll}
             -\lambda^{-1}a_2z -\frac{1}{2}\lambda^{-2}(a_1-a_4)a_2 z^2
              && \hspace{1.3ex}
                 - \lambda^{-2}a_2^2 z^2 \\[.6ex]
             1 + \lambda^{-1}(a_1-a_4)z - \lambda^{-2}a_2a_3 z^2
              && \hspace{1.3ex}
                 \lambda^{-1}a_2z + \frac{1}{2}\lambda^{-2}(a_1-a_4)a_2z^2
                 \hspace{1em}
            \end{array}
       \right]\,, \\[4ex]
    B_4  & =
     & \left[ \begin{array}{llll}
             -\lambda^{-2}a_2a_3 z^2
              &&& \hspace{1.3ex}
                  - \lambda^{-1}a_2z
                  + \frac{1}{2}\lambda^{-2}(a_1-a_4)a_2 z^2
                  \hspace{.9ex} \\[.6ex]
             \lambda^{-1}a_3z + \frac{1}{2}\lambda^{-2}(a_1-a_4)a_3 z^2
              &&& \hspace{1.3ex}
                  1 + \lambda^{-2}a_2a_3 z^2
            \end{array}
       \right]\,.
    \end{array}
   $$
  Denote this solution space by ${\Bbb C}^4_A$ with coordinates
   $(\hat{b}_1,\,\hat{b}_2,\,\hat{b}_3,\,\hat{b}_4)$
   and the correspondence
   $$
    (\hat{b}_1,\,\hat{b}_2,\,\hat{b}_3,\,\hat{b}_4)\;\;\;
     \longleftrightarrow\;\;\;
     \hat{b}_1 B_1\,+\,\hat{b}_2B_2\,+\,\hat{b}_3B_3\,+\,\hat{b}_4B_4\;
     =:\; B_{(\hat{b}_1,\hat{b}_2,\hat{b}_3,\hat{b}_4)}\,.
   $$
  Then,
   \begin{itemize}
    \item[$\cdot$] {\it
     the degree-$0$ term
     $B_{(0)}$ of
     $B=B_{(\hat{b}_1,\hat{b}_2,\hat{b}_3,\hat{b}_4)}$ (in $z$-powers)
     is given by
     $\left[\begin{array}{ll}
             \hat{b}_1 & \hat{b}_2 \\[.6ex] \hat{b}_3 & \hat{b}_4
            \end{array} \right]\,$,}

    \item[$\cdot$] {\it
     the characteristic polynomial of $B$ is identical to
     that of $B_{(0)}$.}
   \end{itemize}
 It follows that the image $\Image\varphi_{(A,B)}$ of $\varphi_{(A,B)}$
  is a (complex-)codimension-$1$ sub-quantum scheme in $Y$
  whose associated ideal in $S_{\lambda}$ contains the ideal
   $$
    \left( v^2-\trace B_{(0)}\,v + \determinant B_{(0)} \right)\,.
   $$

 Let $\mu_-$ and $\mu_+$ be the eigen-values of $B_{(0)}$.

 \bigskip

 \noindent
 {\it Case $(a):\,$ $\nu_-\ne\nu_+$.}\hspace{1ex}
  In this case,  the above ideal $((v-\nu_-)(v-\nu_+))$
   coincides with $\Ker\varphi_{(A,B)}^{\sharp}$ and, hence,
   describes precisely $\Image\varphi_{(A,B)}\subset Y$.
  Since $\varphi_{(A,B)}^{\sharp}(v)=B$,
   let $N_-:= \Ker(B-\nu_-)\subset N$.
  This is a rank-$1$ ${\Bbb C}[z]$-submodule of
    ${\Bbb C}[z]\oplus {\Bbb C}[z]$
   %%%%%%%%%%%%
   %  and, hence, is invariant under $\varphi_{(A,B)}^{\sharp}(w)=z$.
   % Furthermore,
   %  as $(\lambda\partial_z+A)\cdot(B-\nu_-)
   %       = (B-\nu_-)\cdot(\lambda\partial_z+A)$
   %     for $(A,B)$ satisfies $[\lambda\partial_z+A,B]=0$,
   %  $N_-$ is invariant also under
   %   $\varphi_{(A,B)}^{\sharp}(u)=\lambda\partial_z+A$.
   %%%%%%%%%%%%
    that is invariant also under
     $\varphi_{(A,B)}^{\sharp}(S_{\lambda})$.
  This gives $N_-$ a $S_{\lambda}/(v-\nu_-)$-module structure
   that has rank-$1$ as ${\Bbb C}[w]$-module.
  Similarly,
   $N_+:= \Ker(B-\nu_+)\subset N$ is invariant under
     $\varphi_{(A,B)}^{\sharp}(S_{\lambda})$
    and has a $\varphi_{(A,B)}^{\sharp}$-induced
     $S_{\lambda}/(v-\nu_+)$-module structure that is of rank-$1$
      as ${\Bbb C}[w]$-module.
  Let
   $$
    \begin{array}{crl}
     Z & :=
       & \Image\varphi_{(A,B)}\;\;
         =\;\; \Space (S_{\lambda}/((v-\nu_-)(v-\nu_+)))\\[.6ex]
       & =
       & \Space (S_{\lambda}/(v-\nu_-))
            \cup \Space (S_{\lambda}/(v-\nu_+))\;\;
         =:\;\; Z_-\cup Z_+
    \end{array}
   $$
   be the two connected components of the quantum subscheme
   $\Image\varphi_{(A,B)}\subset Y$  and
  denote the ${\cal O}_{Z_-}$-modules associated to $N_-$ and $N_+$
   by $(_{S_{\lambda}}N_-)^{\sim}$ and $(_{S_{\lambda}}N_+)^{\sim}$
   respectively.
  Then
   $$
    \varphi_{(A,B),\ast}{\cal E}\;
     =\; (_{S_{\lambda}}N_-)^{\sim}\oplus (_{S_{\lambda}}N_+)^{\sim}
    \hspace{1em}\mbox{with
     $(_{S_{\lambda}}N_-)^{\sim}$ supported on $Z_-$ and
     $(_{S_{\lambda}}N_+)^{\sim}$ on $Z_+$}\,.
   $$

 \bigskip

 \noindent
 {\it Case $(b):\,$ $\nu_-=\nu_+=\nu$.}\hspace{1ex}
 In this case, $\Ker\varphi_{(A,B)}^{\sharp}$ can be either
  $(v-\nu)$ or $((v-\nu)^2)$ and both situations happen.
 \begin{itemize}
  \item[$\cdot$]
   When $\Ker\varphi_{(A,B)}^{\sharp} = (v-\nu)$,
   $N={\Bbb C}[z]\oplus{\Bbb C}[z]$
    has a $\varphi_{(A,B)}^{\sharp}$-induced
    $S_{\lambda}/(v-\nu)$-module structure  and
   $\varphi_{(A,B),\ast}{\cal E}$ has support
    $\Image\varphi_{(A,B)}=\Space(S_{\lambda}/(v-\nu))\subset Y$.

  \item[$\cdot$]
   When $\Ker\varphi_{(A,B)}^{\sharp} = ((v-\nu)^2)$,
   $N={\Bbb C}[z]\oplus{\Bbb C}[z]$
    has a $\varphi_{(A,B)}^{\sharp}$-induced
    $S_{\lambda}/((v-\nu)^2)$-module structure  and
   $\varphi_{(A,B),\ast}{\cal E}$ has support
    $Z:=\Image\varphi_{(A,B)}=\Space(S_{\lambda}/((v-\nu)^2))\subset Y$.
   It contains an ${\cal O}_Z$-submodule
    $(_{S_{\lambda}}N_0)^{\sim}$, associated to
    $N_0:=\Ker(v-\nu)\subset N$, that is supported on
    $Z_0:=\Space(S_{\lambda}/(v-\nu))\subset Z$.
   In other words, in the current situation, $\varphi_{(A,B),\ast}{\cal E}$
    not only is of rank-$2$ as a ${\Bbb C}[w]$-module
    but also has a built-in $\varphi_{(A,B)}$-induced filtration
    $(_{S_{\lambda}}N_0)^{\sim}\subset \varphi_{(A,B),\ast}{\cal E}$.
 \end{itemize}

 \bigskip

 \noindent
 Thus, by varying $(A,B)$ in the solution space of
   $\lambda\partial_zB+[A,B]=0$
  so that the eigen-values of $B_{(0)}$ change from being distinct
   to being identical and vice versa,
 one realizes the Higgsing and un-Higgsing phenomena of D-branes
   in superstring theory for the current situation
  as deformations of morphisms from Azumaya quantum schemes to
   the open-string quantum target-space $Y$:

 \bigskip
 \bigskip

 \hspace{1ex}
 \xymatrix{
  \framebox[17.6em][c]{\parbox{16.6em}{\it
   deformations of morphisms $\varphi$\\
   from Azumaya deformation-quantized\\ schemes
   with a fundamental module\\
   to a deformation-quantized target $Y$}}
   \ar @2{->}[rr]
   && \framebox[13.6em][c]{\parbox{11.6em}{\it
       Higgsing and un-Higgsing\\
       of Chan-Paton modules\\
       on (image) D-branes on $Y$}}
 } % end-xymatrix

 \bigskip
 \bigskip

 This concludes the example. See also {\sc Figure}~2-1-1.

}\end{ssexample}

\bigskip

\subsection{Tests of the Polchinski-Grothendieck Ansatz for D-branes.}

If the Polchinski-Grothendieck Ansatz is truly fundamental for D-branes
 and the notion of morphisms formulated above does capture D-branes,
then we should be able to see what string-theorists see
 in quantum-field-theory language
 solely by our formulation.
In this subsection, we collect six basic tests in this regard on
 string-theory works, 1995--2008, from our first group of examples.
This group is guided by the following question:
 \begin{itemize}
  \item[$\cdot$] {\bf Q.}\
  \parbox[t]{13cm}{\it {\bf [QFT vs.\ maps]}$\;$
   Can we reconstruct the geometric object that arises
    in a quantum-field-theoretical study of D-branes
    through morphisms from Azumaya noncommutative spaces?}
 \end{itemize}
This subsection is not to be read alone.
Rather, we recommend readers to go through
 the quoted string-theory works on each theme first
 and then compare.

\bigskip

\noindent
{\bf (1)}
\parbox[t]{15cm}{{\bf
 Bershadsky-Sadov-Vafa:
 Classical and quantum moduli space of D$0$-branes.}\\
 ({\it Bershadsky-Sadov-Vafa vs.\ Polchinski-Grothendieck}$\,$;\\[.2ex]
 [B-V-S1], [B-V-S2], and [Va1], 1995.)}
 \begin{itemize}
  \item[]
  The moduli stack ${\frak M}^{\,0^{Az^f}}_{\bullet}\!\!(Y)$
   of morphisms from Azumaya point with a fundamental module
   to a {\it smooth variety $Y$ of complex dimension $2$}
   contains various substacks with different coarse moduli space.
  One choice of such gives rise to the symmetric product
   $S^{\bullet}(Y)$ of $Y$
  while another choice gives rise to the Hilbert scheme
   $Y^{[\bullet]}$ of points on $Y$.
  The former play the role of the classical moduli and
   the latter quantum moduli space of D0-branes studied
  in [Va1] and in [B-V-S1], [B-V-S2].

  \vspace{-1.6ex}
  \item[] $\hspace{1.2em}$
  See [L-Y3: Sec.~4.4] (D(1)),
       theme: `A comparison with the moduli problem of gas of D0-branes
       in [Vafa1] of Vafa' for more discussions.
 \end{itemize}

\bigskip

\noindent
{\bf (2)}
\parbox[t]{15cm}{{\bf
 Douglas-Moore and Johnson-Myers:\\
 D-brane probe to an ADE surface singularity.}\\
({\it Douglas-Moore/Johnson-Myers
      vs.\ Polchisnki-Grothendiecek}$\,$;\\[.2ex]
 [Do-M], 1996, and [J-M], 1996.)}
 \begin{itemize}
  \item[]
   Here, we are compared with the setting of Douglas-Moore [Do-M].
   The notion of `morphisms from an Azumaya scheme
    with a fundamental module'
    can be formulated as well when the target $Y$ is a stack.
   In the current case, $Y$ is the {\it orbifold associated to an
    ADE surface singularity}. It is a {\it smooth Deligne-Mumford stack}.
   Again, the stack ${\frak M}^{\,0^{Az^f}}_{\bullet}\!\!(Y)$
    of morphisms from Azumaya points with a fundamental
    module to the orbifold $Y$ contains various substacks
    with different coarse moduli space.
   An appropriate choice of such gives rise to the resolution of
    ADE surface singularity.

  \vspace{-1.6ex}
  \item[] $\hspace{1.2em}$
  See [L-Y4] (D(3)) for a brief highlight of [Do-M],
   details of the Azumaya geometry involved, and more references.
 \end{itemize}

\bigskip

\noindent
{\bf (3)}
\parbox[t]{15cm}{{\bf
 Klebanov-Strassler-Witten: D-brane probe to a conifold.}\\
({\it Klebanov-Strassler-Witten vs.\ Polchinski-Grothendieck}$\,$;
 [Kl-W], 1998, and [Kl-S], 2000.)}
 \begin{itemize}
  \item[]
   Here, the problem is related to the moduli stack
    ${\frak M}^{\,0^{Az^f}}_{\bullet}\!\!(Y)$
    of morphisms from Azumaya points with a fundamental module
    to a local conifold $Y$, a singularity Calabi-Yau $3$-fold,
    whose complex structure is given by
    $Y=\Spec({\Bbb C}[z_1,z_2,z_3,z_4]/(z_1z_2-z_3z_4))$.
   Again, different resolutions of the conifold singularity of $Y$
    can be obtained by choices of substacks from
    ${\frak M}^{\,0^{Az^f}}_{\bullet}\!\!(Y)$,
   as in Tests (1) and (2).
   Such a resolution corresponds to a low-energy effective geometry
    ``observed" by a stacked D-brane probe to $Y$
    when there are no fractional/trapped brane sitting
    at the singularity {\boldmath $0$} of $Y$.

  \vspace{-1.6ex}
  \item[] $\hspace{1.2em}$
   New phenomenon arises when there are fractional/trapped  D-branes
   sitting at {\boldmath $0$}. Instead of resolutions of the conifold
   singularity of $Y$, a low-energy effective geometry ``observed"
   by a D-brane probe is a complex deformation of $Y$ with topology
   $T^{\ast}S^3$ (the cotangent bundle of $3$-sphere).
   From the Azumaya geometry point of view, two things happen:
   \begin{itemize}
    \item[$\cdot$]
     Taking {\it both} the (stacked-or-not) D-brane probe
       and the trapped brane(s) into account,
     the Azumaya geometry on the D-brane world-volume remains.

    \item[$\cdot$]
     A noncommutative-geometric enhancement of $Y$ occurs
      via morphisms
      $$
      \xymatrix{
       & \Xi=\Space R_{\Xi} \ar[d]^{\pi^{\Xi}} \\
       Y \ar @{^{(}->}[r] & {\Bbb A}^4  &.
      }
      $$
   \end{itemize}
   Here,
    ${\Bbb A}^4=\Spec({\Bbb C}[z_1,z_2,z_3,z_4])$,
    $$
     R_{\Xi}\;=\;
      \frac{ {\Bbb C}\langle\,\xi_1,\xi_2,\xi_3,\xi_4\, \rangle }
      { ([\xi_1\xi_3, \xi_2\xi_4]\,,\, [\xi_1\xi_3, \xi_1\xi_4]\,,\,
         [\xi_1\xi_3, \xi_2\xi_3]\,,\, [\xi_2\xi_4, \xi_1\xi_4]\,,\,
         [\xi_2\xi_4, \xi_2\xi_3]\,,\, [\xi_1\xi_4, \xi_2\xi_3]) }
    $$
     with
      ${\Bbb C}\langle\,\xi_1,\xi_2,\xi_3,\xi_4\, \rangle$
       being the associative (unital) ${\Bbb C}$-algebra
       generated by $\xi_1,\,\xi_2,\,\xi_3,\,\xi_4$ and
      $[\bullet\,,\bullet^{\prime}\,]$ being the commutator,
     $Y\hookrightarrow {\Bbb A}^4$ via the definition of $Y$ above,
       and
     $\pi^{\Xi}$ is specified by the ${\Bbb C}$-algebra homomorphism
      $$
      \begin{array}{cccccl}
       \pi^{\Xi,\sharp}  & :
        & {\Bbb C}[z_1,z_2,z_3,z_4]  & \longrightarrow  &  R_{\Xi}\\
       && z_1 & \longmapsto  & \xi_1\xi_3 \\
       && z_2 & \longmapsto  & \xi_2\xi_4 \\
       && z_3 & \longmapsto  & \xi_1\xi_4 \\
       && z_4 & \longmapsto  & \xi_2\xi_3  & .\\
      \end{array}
      $$
   One is thus promoted to studying the stack
    ${\frak M}^{\,0^{Az^f}}_{\bullet}\!\!(\Space R_{\Xi})$,
    of morphisms from Azumaya points with a fundamental module to
    $\Space R_{\Xi}$, following Sec.~2.1.

  \vspace{-1.6ex}
  \item[] $\hspace{1.2em}$
   To proceed, we need the following notion:

  \item[]
  {\bf Definition 2.4.3.1.
  [superficially infinitesimal deformation].} {\rm
   Given associative (unital) rings,
    $R=\langle\,r_1,\,\ldots\,,r_m\,\rangle/\!\!\sim$ and $S$,
   that are finitely-presentable and
    a ring-homomorphism $h:R\rightarrow S$.
   A {\it superficially infinitesimal deformation} of $h$
     {\it with respect to the generators}
     $\{r_1,\,\ldots\,,r_m\}$ {\it of} $R$
    is a ring-homomorphism $h_{\varepsilon}:R\rightarrow S$
     such that
      $h_{\varepsilon}(r_i)=h(r_i)+\varepsilon_i$
       with $\varepsilon_i^2=0$,
      for $i=1,\,\ldots\,,m$.
    } % end-definition

  \item[]
   When $S$ is commutative,
    a superficially infinitesimal deformation of
     $\, h_{\varepsilon}:R\rightarrow S\,$
     is an infinitesimal deformation of $h$
     in the sense that $h_{\varepsilon}(r)=h(r)+\varepsilon_r$
      with $(\varepsilon_r)^2=0$, for all $r\in R$.
   This is no longer true for general noncommutative $S$.
   The $S$ plays the role of the Azumaya algebra
    $M_{\bullet}({\Bbb C})$ in our current test.
   It turns out that
   a morphism $\varphi:\pt^{A\!z}\rightarrow \Space R_{\Xi}$
    that projects by $\pi^{\Xi}$ to the conifold singularity
    {\boldmath $0$}$\in Y$ can have
     superficially infinitesimal deformations $\varphi^{\prime}$
    such that the image $(\pi^{\Xi}\circ \varphi^{\prime})(\pt^{A\!z})$
     contains not only {\boldmath $0$} but also points in
     ${\Bbb A}^4-Y$.
   Indeed there are abundant such superficially infinitesimal
    deformations.
   Thus, beginning with a substack ${\cal Y}$ of
    ${\frak M}^{\,0^{Az^f}}_{\bullet}\!\!(\Space R_{\Xi})$,
    that projects onto $Y$
    via $\varphi\mapsto \Image(\pi^{\Xi}\circ\varphi)$,
    one could use a $1$-parameter family
    of superficially infinitesimal deformations of $\varphi\in {\cal Y}$
    to drive ${\cal Y}$ to a new substack ${\cal Y}^{\prime}$
    that projects to {\boldmath $0$}$\cup Y^{\prime}\subset {\Bbb A}^4$,
    where $Y^{\prime}$ is smooth (i.e.\ a deformed conifold).
   It is in this way that a deformed conifold $Y^{\prime}$
    is detected by the D-brane probe via the Azumaya structure
    on the common world-volume of the probe and the trapped brane(s).
   Cf.~{\sc Figure}~2-4-1.
   \begin{figure}[htbp]
    \epsfig{figure=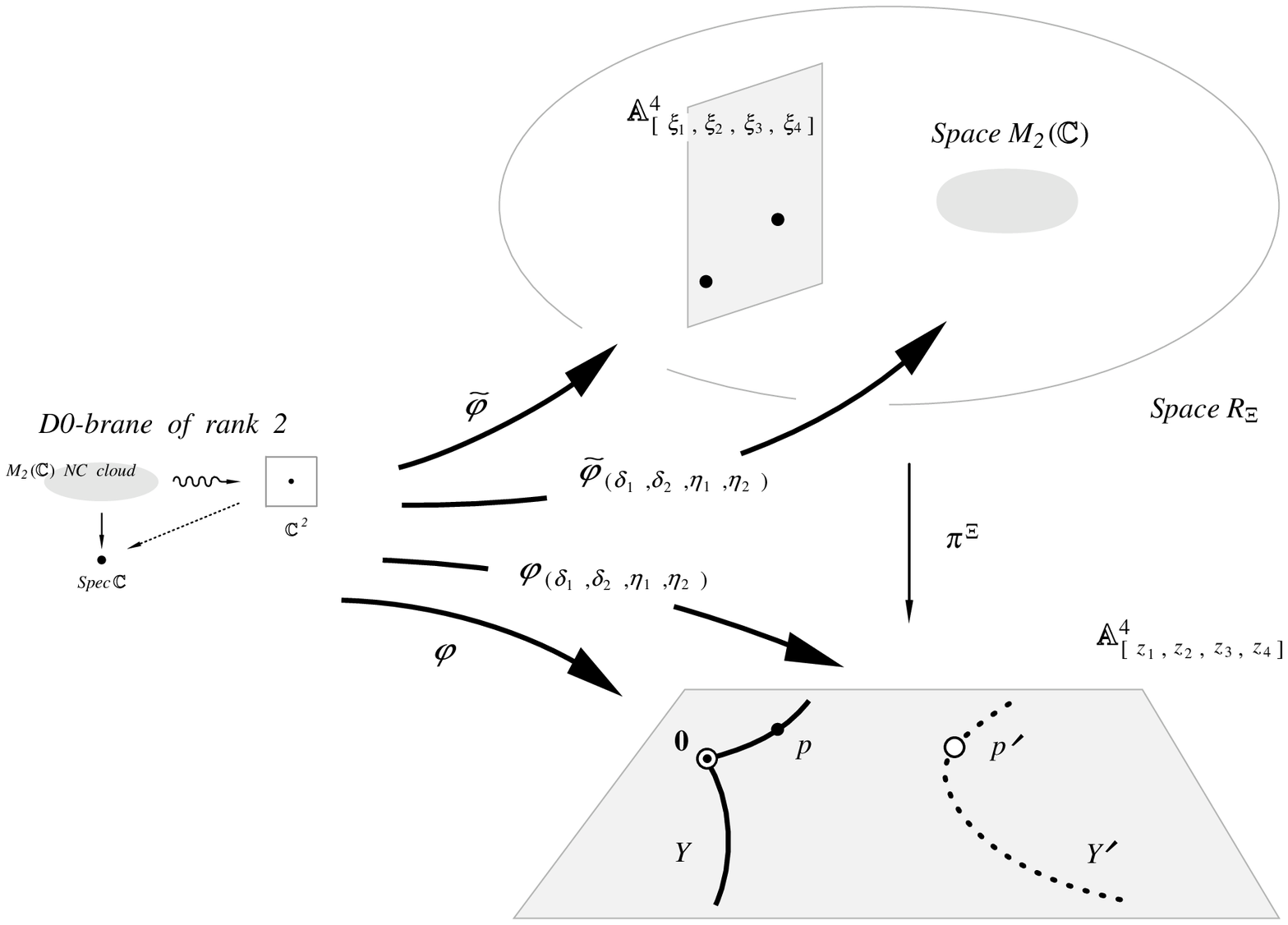,width=16cm}
    \centerline{\parbox{13cm}{\small\baselineskip 12pt
     {\sc Figure}~2-4-1.
     A generic superficially infinitesimal deformation
      $\widetilde{\varphi}_{(\delta_1,\delta_2,\eta_1,\eta_2)}$
      of $\widetilde{\varphi}$ has
      a noncommutative image $\simeq$ {\it Space}$\,M_2({\Bbb C})$.
     It then descends to
      ${\Bbb A}^4_{[z_1,z_2,z_3,z_4]}
       :=\smallSpec({\Bbb C}[z_1,z_2,z_3,z_4])$
       and
      becomes a pair of ${\Bbb C}$-points
       on ${\Bbb A}^4_{[z_1,z_2,z_3,z_4]}$.
     One of the points is the conifold singularity
      ${\mathbf 0}=V(z_1,z_2,z_3,z_4)\in Y$  and
     the other is the point
      $p^{\prime}
       =V(\,z_1-a_1b_1-\delta_1\eta_1\,,\, z_2-a_2b_2-\delta_2\eta_2\,,\,
            z_3-a_1b_2-\delta_1\eta_2\,,\, z_4-a_2b_1-\delta_2\eta_1\,)\,$
      {\it off} $\,Y$ (generically).
    Through such deformations, any ${\Bbb C}$-point on
     ${\Bbb A}^4_{[z_1,z_2,z_3,z_4]}$ can be reached.
    Thus, one can realizes a deformation $Y^{\prime}$ of $Y$
      in ${\Bbb A}^4_{[z_1,z_2,z_3,z_4]}$
     by a subvariety in {\it Rep}$\,(R_{\Xi},M_2({\Bbb C}))$.
    This is the Azumaya-geometry origin of
     the phenomenon in Klebanov-Strassler [Kl-S] that
      a trapped D-brane sitting on the conifold singularity
      may give rise to a deformation of the moduli space of
      SQFT on the D$3$-brane probe, turning a conifold to
      a deformed conifold.
    Our D$0$-brane here corresponds to the internal part of
     the effective-space-time-filling D$3$-brane world-volume
     of [Kl-S].
    In this figure,
     ${\Bbb A}^4_{[\xi_1,\xi_2,\xi_3,\xi_4]}
       :=\smallSpec({\Bbb C}[\xi_1,\xi_2,\xi_3,\xi_4])
        =\smallSpec(R_{\Xi}/[R_{\Xi},R_{\Xi}])$
      is the maximal commutative subspace of $\smallSpace R_{\Xi}$
       and
     $(\delta_1,\delta_2,\eta_1,\eta_2)$
       parameterizes the superficially infinitesimal deformations
       of $\varphi$ in the current situation.
    }}
   \end{figure}

  \vspace{-1.6ex}
  \item[] $\hspace{1.2em}$
  See [L-Y5] (D(4)) for a brief highlight of [Kl-W] and [Kl-S],
   details of the Azumaya geometry involved, and more references.
 \end{itemize}

\bigskip

\noindent
{\bf (4)}
\parbox[t]{15cm}{{\bf
 G\'{o}mez-Sharpe:
 Information-preserving geometry, schemes, and D-branes.}\\
({\it G\'{o}mez-Sharpe vs.\ Polchisnki-Grothendieck}$\,$;
 [G-Sh], 2000.)}
 \begin{itemize}
  \item[]
   Among the various groups who studied the foundation of D-branes,
    this is a work that is very close to us in spirit.
   There, G\'{o}mez and Sharpe began with the quest:
    [G-Sh: Sec.~1]
   \begin{quote} ``{\it
    As is well-known, on $N$ coincident D-branes,
     $U(1)$ gauge symmetries are enhanced to $U(N)$ gauge symmetries,
    and scalars that formerly described normal motions of the branes
     become $U(N)$ adjoints.
    People have often asked what the deep reason for this behavior is
     -- what does this tell us about the geometry seen by D-branes?
     }",
   \end{quote}
   like us.
   They observed by comparing colliding D-branes with
    colliding torsion sheaves in algebraic geometry
    that it is very probable that
    \begin{quote}
     {\it coincident D-branes
     should carry some fuzzy structure} --
     {\it perhaps a nonreduced scheme structure}
    \end{quote}
    though the latter may carry more information
    than D-branes do physically.
   Further study on such nilpotent structure was done in [D-K-S];
   cf.~Sec.~4.2: theme
    `The generically filtered structure on the Chan-Patan bundle
     over a special Lagrangian cycle on
     a Calabi-Yau torus'
    of the current review.

  \vspace{-1.6ex}
  \item[] $\hspace{1.2em}$
  From our perspective,
   \begin{quote}
    {\it the (commutative) scheme/nilpotent structure
    G\'{o}mez and Sharpe proposed/ observed on a stacked D-brane
    is the manifestation/residual of the Azumaya (noncommutative)
    structure on an Azumaya space with a fundamental module
   when the latter forces itself into a commutative space/scheme
    via a morphism}.
   \end{quote}
  This connects our work to [G-Sh].
 \end{itemize}

\bigskip

\noindent
{\bf (5)}
\parbox[t]{15cm}{{\bf
 Sharpe: $B$-field, gerbes, and D-brane bundles.}\\
({\it Sharpe vs.\ Polchinski-Grothendieck}$\,$;
 [Sh2], 2001.)}
 \begin{itemize}
  \item[]
   Recall that a $B$-field on the target space(-time) $Y$
    specifies a gerbe ${\cal Y}_B$ over $Y$ associated to
    an $\alpha_B\in \check{C}^2_{\et}(Y,{\cal O}_Y^{\ast})$
    determined by the $B$-field.
  A morphism $\varphi:(X^{\!A\!z},{\cal E})\rightarrow (Y,\alpha_B)$
   from a general Azumaya scheme with a twisted fundamental module
   to $(Y,\alpha_B)$ can be lifted to a morphism
   $\breve{\varphi}:({\cal X}^{A\!z},{\cal F})\rightarrow {\cal Y}_B$
   from an Azumaya ${\cal O}_X^{\ast}$-gerbe with a fundamental module
   to the gerbe ${\cal Y}_B$.
  In this way, our setting is linked to Sharpe's picture of
   gerbes and D-brane bundles in a $B$-field background.

  \vspace{-1.6ex}
  \item[] $\hspace{1.2em}$
  See [L-Y6: Sec.~2.2] (D(5)) theme:
  `The description in term of morphisms from Azumaya gerbes
    with a fundamental module to a target gerbe'
  for details of the construction.
 \end{itemize}

\bigskip

\noindent
{\bf (6)}
\parbox[t]{15cm}{{\bf
 Dijkgraaf-Hollands-Su{\l}kowski-Vafa: Quantum spectral curves.}\\
({\it Dijkgraaf-Hollands-Su{\l}kowski-Vafa
      vs.\ Polchisnki-Grothendieck}$\,$;\\[.2ex]
 [D-H-S-V], 2007, and [D-H-S], 2008.)}
 \begin{itemize}
  \item[]
   Here we focus on a particular theme in these works: the notion of
    {\it quantum spectral curves from the viewpoint of D-branes}.
   Let
    $C$ be a smooth curve,
    ${\cal L}$ an invertible sheaf on $C$,
    ${\cal E}$ a coherent locally-free ${\cal O}_C$-module,  and
    {\boldmath ${\cal L}$}$=\boldSpec(\Sym^{\bullet\,}({\cal L}^{\vee}))$
     be the total space of ${\cal L}$.
   Here, ${\cal L}^{\vee}$ is the dual ${\cal O}_C$-module of ${\cal L}$.
   Then one has the following canonical one-to-one correspondence:
   $$
    \left\{
     \begin{array}{l}
      \mbox{${\cal O}_C$-module homomorphisms}\\
      \phi:{\cal E}\rightarrow {\cal E}\otimes{\cal L}
     \end{array}
    \right\}\;
     \longleftrightarrow\;
    \left\{
     \begin{array}{l}
       \mbox{morphisms
        $\varphi:(C^{A\!z},{\cal E})\rightarrow$
                                    {\boldmath ${\cal L}$}}\\
       \mbox{as spaces over $C$}
     \end{array}
    \right\}
   $$
   induced by the canonical isomorphisms
   $$
   \Hom_{{\cal O}_C}({\cal E},{\cal E}\otimes{\cal L})\;
     \simeq\; \Gamma({\cal E}^{\vee}\otimes{\cal E}\otimes{\cal L})\;
     \simeq\; \Hom_{{\cal O}_C}
               ({\cal L}^{\vee},\,\Endsheaf_{{\cal O}_C}({\cal E}))\,.
   $$
   Let $\Sigma_{({\cal E},\phi)}\subset\;${\boldmath ${\cal L}$}
    be the (classical) spectral curve
    associated to the Higgs/spectral pair $({\cal E},\phi)$;
   cf.~e.g.\ [B-N-S], [Hi1], and [Ox].
   Then, for $\varphi$ corresponding to $\phi$,
    $\Image\varphi\subset \Sigma_{({\cal E},\phi)}$.
   Furthermore, if $\Sigma_{({\cal E},\phi)}$ is smooth,
    then $\Image\varphi=\Sigma_{({\cal E},\phi)}$.
   This gives a {\it morphism-from-Azumaya-space interpretation of
    spectral curves}.

  \vspace{-1.6ex}
  \item[] $\hspace{1.2em}$
   To address the notion of `quantum spectral curve',
    let ${\cal L}$ be the sheaf $\Omega_C$ of differentials on $C$.
   Then the total space {\boldmath $\Omega$}$_C$ of $\Omega_C$ admits
    a canonical ${\Bbb A}^1$-family
    $Q_{{\Bbb A}^1}\mbox{\boldmath $\Omega$}_C$
    of deformation quantizations with the central fiber
    $Q_0\mbox{\boldmath $\Omega$}_C=\mbox{\boldmath $\Omega$}_C$.
   Let
    $({\cal E},\phi:{\cal E}\rightarrow {\cal E}\otimes\Omega_C)$
     be a spectral pair  and
    $\varphi: (C^{A\!z},{\cal E})\rightarrow \mbox{\boldmath $\Omega$}_C$
     be the corresponding morphism.
   Denote the fiber of $Q_{{\Bbb A}^1}\mbox{\boldmath $\Omega$}_C$
    over $\lambda\in {\Bbb A}^1$ by
    $Q_{\lambda}\mbox{\boldmath $\Omega$}_C$.
   Then, due to the fact that the Weyl algebras are simple algebras,
    the spectral curve $\Sigma_{({\cal E},\phi)}$
    in {\boldmath $\Omega$}$_C$ in general may not have a direct
    deformation quantization into $Q_{\lambda}\mbox{\boldmath $\Omega$}_C$
    by the ideal sheaf of $\Sigma_{({\cal E},\phi)}$
    in ${\cal O}_{\mbox{\scriptsize\boldmath $\Omega$}_C}$
   since this will only give
    ${\cal O}_{Q_{\lambda}\mbox{\scriptsize\boldmath $\Omega$}_C}$,
     which corresponds to the empty subspace of
     $Q_{\lambda}\mbox{\boldmath $\Omega$}_C$.
   However, one can still construct
    an ${\Bbb A}^1$-family
     $(Q_{{\Bbb A}^1}C^{A\!z}, Q_{{\Bbb A}^1}{\cal E})$
     of Azumaya quantum curves with a fundamental module
     out of $(C^{A\!z},{\cal E})$ and
    a morphism
     $\varphi_{{\Bbb A}^1}:
      (Q_{{\Bbb A}^1}C^{A\!z}, Q_{{\Bbb A}^1}{\cal E})
      \rightarrow Q_{{\Bbb A}^1}\mbox{\boldmath $\Omega$}_C$
     as spaces over ${\Bbb A}^1$,
    using the notion of `$\lambda$-connections'
     and `$\lambda$-connection deformations of $\phi$',
    such that
    \begin{itemize}
     \item[$\cdot$]
      $\varphi_0:= \varphi_{{\Bbb A}^1}|_{\lambda=0}$
       is the composition
       $\,(Q_0C^{A\!z},Q_0{\cal E})\,
           \longrightarrow\, (C^{A\!z},{\cal E})
            \stackrel{\varphi}{\longrightarrow}
             \mbox{\boldmath $\Omega$}_C\,$,
       where\\
         $(Q_0C^{A\!z},Q_0{\cal E})\rightarrow (C^{A\!z},{\cal E})$
         is a built-in dominant morphism from the construction;

     \item[$\cdot$]
      $\varphi_\lambda := \varphi_{{\Bbb A}^1}|_{\lambda}\,
        :\: (Q_{\lambda}C^{A\!z},Q_{\lambda}{\cal E})
            \longrightarrow Q_{\lambda}\mbox{\boldmath $\Omega$}_C$,
      for $\lambda\in {\Bbb A}^1-\{\mathbf 0\}\,$,
      is a morphism of Azumaya quantum curves with a fundamental module
      to the deformation-quantized noncommutative space
      $Q_{\lambda}\mbox{\boldmath $\Omega$}_C$.
    \end{itemize}
   In other words, we {\it
    replace the notion of `quantum spectral curves' by
    `quantum deformation $\varphi_{\lambda}$ of the morphism
    $\varphi$'}.
   In this way, both notions of classical and quantum spectral curves
    are covered in the notion of morphisms from Azumaya spaces.

  \vspace{-1.6ex}
  \item[] $\hspace{1.2em}$
   See [L-Y6: Sec.~5.2] (D(5))
    for more general discussions, details, and more references.
 \end{itemize}

\bigskip

\subsection{Remarks on general Azumaya-type noncommutative schemes.}

{From} the pure mathematical/geometric point of view,
it should be clear that the notion of (trivial or nontrivial)
 `Azumaya noncommutative schemes with a fundamental module'
 alone is not a final/complete picture.
Beginning with such a space $(X^{\!A\!z},{\cal E})$,
 let $X_{\cal A}$ be a surrogate of $X^{\!A\!z}$,
the category ${\cal C}$ that contains
  all Azumaya noncommutative schemes with a fundamental module
 should contain also $(X_{\cal A}^{A\!z}\,,\,_{\cal A}{\cal E})$,
  where ${\cal O}_{X_{\cal A}}^{A\!z}
                =\Endsheaf_{\cal A}(_{\cal A}{\cal E})$.
{From} this one starts to extend the set of objects of ${\cal C}$
 to include sheaves of orders with a generically fundamental module,
 ...$\,$.
Furthermore, from
  the naturality of operations on the category of sheaves of modules
   ([Il] and [Kas-S]) and
  the later development of D-branes since 1999 ([Sh1] and [Dou6]),
 one expects that one finally has to consider everything
 in the derived(-category) sense.

\bigskip
\bigskip

\begin{flushleft}
{\large\bf String-theoretical remarks on Sec.~2.}
\end{flushleft}
(1) [{\it Matrix gauged linear sigma model}$\,$]

\medskip

% conjecture
\noindent
{\bf Conjecture [matrix gauged linear sigma model].} {\it
 For each gauged linear sigma model in the sense of [Wi1],
 there exists a canonically constructed
  matrix/Azumaya gauged linear sigma model
 so that the moduli space of vacua of the latter
  is the stack of D0-branes,
   in the sense of Polchinski-Grothendieck Ansatz,
  on the moduli space of vacua of the former.
}% end-conjecture

\bigskip

\noindent
(2) [{\it Evidence of Polchinski-Grothendieck Ansatz}$\,$]

\medskip

\noindent
Each of the six tests reviewed/presented compactly in Sec.~2.4
 has their own distinct feature.
Passing one does not imply passing another. Thus, all six tests
together give us a first evidence
 of the Polchinski-Grothendieck Ansatz
 as a foundational feature of D-branes.

\bigskip

\noindent
(3) [{\it Too much information?}$\,$]

\medskip

\noindent
G\'{o}mez and Sharpe pointed out in [G-Sh]
 that the scheme structure on D-branes may carry more information
 than D-branes do physically.
This rings also with the mathematical fact
 that not all morphisms are good, e.g.,
   in the sense of the existence of a perfect obstruction theory
   on the moduli stack of morphisms of a fixed combinatorial type.
So having too much information -- i.e.\
  the necessity to single out ``good" morphisms in our setting --
 is a question one should definitely address
 -- if not for stringy reasons, then for mathematical reasons.

\bigskip

\section{The differential/symplectic topological aspect:
         Azumaya noncommutative $C^{\infty}$-manifolds
         with a fundamental module and morphisms therefrom.}

Having reviewed Azumaya (noncommutative) geometry
 in the algebro-geometric setting,
we now take it as background to introduce
 Azumaya noncommutative $C^{\infty}$-manifold
  with a fundamental module  and
 smooth morphisms/maps therefrom to a complex projective manifold $Y$.
This Azumaya geometry
  in the differential/symplective topological category
 will be our prototypical picture of D-branes of A-type
  in superstring theory
  along the line of the Polchinski-Grothendieck Ansatz.
{For} simplicity, we assume that there is no $B$-field on $Y$
 in Sec.~3 and Sec.~4.
In particular, any Azumaya structure is assumed to be
 (Zariski-/analytic-/$C^{\infty}$-topology-)locally trivializable
 as $M_r(R)$ for $R$ the function/coordinate ring of a local chart.

\bigskip

\subsection{Azumaya noncommutative $C^{\infty}$-manifolds
    with a fundamental module and morphisms therefrom.}

Aspect~II and Aspect~IV presentations of
 morphisms from Azumaya $C^{\infty}$-manifolds are given.
The notion of $1$-forms and of their tensor products
 on an Azumaya $C^{\infty}$-manifold are introduced.

\bigskip

\begin{flushleft}
{\bf Maps from a real manifold(-stratified space)
     to a complex manifold/variety/stack.}
\end{flushleft}
Let $X$ be a real smooth manifold.

\begin{convention} {\rm [{\it $\,C^{\infty}$ structure sheaf}$\:$].
 To uniform the notation with algebro-geometry,
  we will denote the sheaf $C^{\infty}_X$ of smooth functions on $X$
  interchangeably by ${\cal O}_X^{\infty}$ or, simply, ${\cal O}_X$.
}\end{convention}

\noindent
Let $Y$ be a complex manifold.
Again, we will assume that $Y$ is projective
 though some constructions below do not require this.
Then a (smooth) map $f:X\rightarrow Y$ is specified by
 the pullback of functions
 $f^{\sharp}:{\cal O}_Y
     \rightarrow {\cal O}_X^{\infty}\otimes_{\Bbb R}{\Bbb C}$.
Identify $Y$ with a smooth variety over ${\Bbb C}$,
we shall think of $f$ as specifying a (real) $X$-family
 of ${\Bbb C}$-points on $Y$.

This interpretation applies also for $Y$ a singular complex variety.
Treating $X$ as a gluing system of local charts,
 the same picture applies when $Y$ is an Artin stack over ${\Bbb C}$.

The same applies also
 when $X$ is a topological space that is stratified by real manifolds.
In this case, the smooth structure sheaf
 ${\cal O}_X^{\infty}=:{\cal O}_X$ of $X$ is defined to be the sheaf
 of continuous functions on $X$
 that is smooth in the interior of all its manifold strata.

\bigskip

\begin{flushleft}
{\bf Morphisms from Azumaya $C^{\infty}$-manifolds
     with a fundamental module.}
\end{flushleft}
The theory of sheaves on real manifolds was already developed
  long ago, e.g.~[Kas-S].
It can be used to parallelly construct Azumaya geometry
 in the $C^{\infty}$-category
as is done in the algebro-geometric category.

\begin{definition}
{\bf [Azumaya $C^{\infty}$-manifold].} {\rm
 Let $X$ be a real (smooth) manifold with structure sheaf
  ${\cal O}_X^{\infty}$.
 An {\it Azumaya $C^{\infty}$-manifold with a fundamental module}
  is a triple
  $$
   (X\,,\,
    {\cal O}_X^{\infty,A\!z} :=
    \Endsheaf
     _{{\cal O}_X^{\infty}\otimes_{\Bbb R}{\Bbb C}} ({\cal E})\,,\,
    {\cal E})\;
   =:\; (X^{\!A\!z},{\cal E})
  $$
   where
    ${\cal E}$ is the sheaf of (smooth) sections of
    a (smooth) complex vector bundle $E$ over $X$.
 The sheaf ${\cal O}_X^{\infty,A\!z}$ of
  ${\cal O}_X^{\infty}\otimes_{\Bbb R}{\Bbb C}$-algebras
  is called the {\it Azumaya (noncommutative) structure sheaf}
  of $X^{\!A\!z}$.
 It contains ${\cal O}_X^{\infty}\otimes_{\Bbb R}{\Bbb C}$
  as its sheaf of centers.
}\end{definition}

With the interpretation of a map
  from a real-manifold-stratified topological space
  to a complex manifold/variety/stack
  in the previous theme,
all four aspects of
 a morphism from an Azumaya scheme with a fundamental module to $Y$
 are expected to be adoptable to the $C^{\infty}$-category
 to give four equivalent aspects of a morphism
 $\varphi: (X,{\cal O}_X^{\infty,A\!z},{\cal E})\rightarrow Y$.
However, for Aspect~I, to develop a theory in its own right
 to characterize
 an ${\cal O}_X^{\infty}\otimes_{\Bbb R}{\Bbb C}$-subalgebra
  ${\cal O}_X^{\infty}\otimes_{\Bbb R}{\Bbb C}\subset {\cal A}
   \subset {\cal O}_X^{\infty,A\!z}$  of ${\cal O}_X^{\infty,A\!z}$
  such that ${\cal A}_{\redscriptsize}$
   is the complexified  structure sheaf
   ${\cal O}_{X^{\prime}}^{\infty}\otimes_{\Bbb R}{\Bbb C}$
   of a real-manifold-stratified space $X^{\prime}$
 is a very technical language issue.
An easier path to take is treat Aspects~II and IV
 with slightly higher weight.
Once either is taken as the starting point,
 the remaining Aspects~I and III become a matter of translation.
In the following two definitions,
 we actually mix Aspacts~II and IV slightly
 to take care of the notion of `flat over $X$' and
 of `piecewise smooth surrogate':

\begin{definition}
{\bf [morphism: Aspect~II].}
 An Aspect~II presentation of
  a {\it morphism} from an Azumaya $C^{\infty}$-manifold
   with a fundamental module of rank $r$ to $Y$
  is the following data:
  \begin{itemize}
   \item[$\cdot$]
    A torsion sheaf $\widetilde{\cal E}$ of
    ${\cal O}_X\otimes_{\Bbb R}{\cal O}_Y$-modules on $X\times Y$
    that satisfies:
    \begin{itemize}
     \item[$(1)$]
      on each $\{p\}\times Y$,
       $\widetilde{\cal E}|_{\{p\}\times Y}$
       is a $0$-dimensional ${\cal O}_Y$-module of length $r$;

     \item[$(2)$]
      $(\Supp\widetilde{\cal E})_{\redscriptsize}\subset X\times Y$,
       with the induced topology, is stratified by
       smooth manifolds\footnote{Here
                        a subtle issue comes in:
                        In algebraic geometry,
                          the support of a sheaf ${\cal F}$
                            on a scheme $Z$ is the subscheme
                           defined by the ideal sheaf
                         {\it Ker}$\,({\cal O}_Z\rightarrow\,
                           {\cal E}\mbox{\it nd}_{{\cal O}_Z}({\cal F}))$
                         of ${\cal O}_Z$.
                        This is the most natural notion
                         of `support of a sheaf'
                         as it encodes some fuzzy structure
                           related to sections of ${\cal F}$.
                        Here, we are in $C^{\infty}$-category.
                        Naively, {\it Supp}$\,\widetilde{\cal E}$
                         would be just the set of points,
                          with the induced subset-topology,
                          on $X\times Y$
                         such that the stalk of $\widetilde{\cal E}$
                          at which is non-zero.
                        However, along the $Y$-direction,
                         it still makes sense to talk about
                         scheme-type structure.
                        Indeed, one wishes to define
                         {\it Supp}$\,\widetilde{\cal E}$
                         as close to scheme-theoretical support
                         as possible to reflect D-branes.
                        In the current work, we do not finalize
                         the resolution of this issue
                         in the $C^{\infty}$/symplectic category.
                        Rather, we use Aspect~IV to encode this
                         not-yet-defined structure
                         as much as possible.
                        Here, the redundant notation
                         ({\it Supp}$\,\widetilde{\cal E}$)$_{red}$
                         for the point-set support of
                         $\widetilde{\cal E}$ on $X\times Y$
                         is meant to keep this subtle point in mind.
                        See Sec.~2.4 Test (4) with [G-Sh]
                         and Remark~4.2.5.
                           % Remark [hidden scheme structure
                           %         in symplective geometry]
                         };

     \item[$(3)$]
      for any $p\in X$, there exists a neighborhood $U$ of $p$
       such that
       there is a continuous map
        $f_U: U\rightarrow
         (\Quot^{\,H^0}\!({\cal O}_Y^{\oplus r},r))_{\redscriptsize}$
        with $f_U(p)\simeq \widetilde{\cal E}|_{\{p\}\times Y}$.
    \end{itemize}
  \end{itemize}
  Here we treat
   $(\Quot^{\,H^0}\!({\cal O}_Y^{\oplus r},r))_{\redscriptsize}$
   as a singular complex space with the analytic topology.
\end{definition}

\begin{remark} {\rm [$\,${\it from Aspect~II to Aspect~I}$\,$].
 Given an Aspect~II presentation $\widetilde{\cal E}$ on $X\times Y$
   of a morphism,
  let $\pr_1:X\times Y\rightarrow X$ and
    $\pr_2:X\times Y\rightarrow Y$ be the projection maps.
 Then, ${\cal E}$ on $X$ is recovered by
  $\pr_{1\ast}\widetilde{\cal E}$.
 The composition
  ${\cal O}_Y \stackrel{pr_2^{\sharp}}{\longrightarrow}
    {\cal O}_X\boxtimes_{\,\Bbb R}{\cal O}_Y \longrightarrow
    \Endsheaf_{{\cal O}_X\boxtimes_{\Bbb R}{\cal O}_Y}(\widetilde{\cal E})$
  induces a map between sheaves of rings
   $\varphi^{\sharp}:{\cal O}_Y\rightarrow
     \Endsheaf_{{\cal O}_X^{\infty}\otimes_{\Bbb R}{\Bbb C}}({\cal E})
     ={\cal O}_X^{\infty,A\!z}$.
  This recovers $\varphi:(X^{\!A\!z},{\cal E})\rightarrow Y$.
}\end{remark}

\begin{definition}
{\bf [morphism: Aspect~IV].}
 An Aspect~IV presentation of a {\it morphism}
  from an Azumaya $C^{\infty}$-manifold with a fundamental module
  of rank $r$ to $Y$ is the following data:
  \begin{itemize}
   \item[$\cdot$]
    A $\GL_r({\Bbb C})$-equivariant continuous map
     $f_{P_X}: P_X \rightarrow
      (\Quot^{\,H^0}\!({\cal O}_Y^{\oplus r},r))_{\redscriptsize}$,
     where $P_X$ is a smooth principal $\GL_r({\Bbb C})$-bundle
      over $X$,
    that satisfies:
    \begin{itemize}
     \item[$(1)$]
      the point-set support
      $(\Supp\widetilde{\cal E})_{\redscriptsize}$,
       with the subset topology,
      of the sheaf $\widetilde{\cal E}$ on $X\times Y$
       associated to $f_{P_X}$
       is stratified by smooth manifolds.
    \end{itemize}
  \end{itemize}
\end{definition}

\begin{remark} {\rm [$\,${\it from Aspect~IV to Aspect~I}$\,$].
 Note that
  the $\widetilde{\cal E}$ on $X\times Y$ associated
  to $f_{P_X}$ automatically satisfies Conditions (1) and (3)
  in Definition~3.1.3.
   % Definition [morphism: Aspect~II]
 Thus, it gives an Aspect~II presentation of a morphism,
  which can recover Aspect~I, `The fundamental setting',
  of a morphism by Remark~3.1.4.
                 % Remark [from Aspect~II to Aspect~I]
}\end{remark}

\bigskip

\begin{flushleft}
{\bf K\"{a}hler differentials
     on an Azumaya noncommutative scheme/$C^{\infty}$-manifold.}
\end{flushleft}
Before leaving this subsection, we introduce the basic notion of
 {\it K\"{a}hler differentials} (i.e.\ {\it $1$-forms})
 and their {\it tensor products} on an Azumaya noncommutative space.
This is a notion we can still bypass in this review/work
 (but see Remark~3.2.2);
        % Remark [intrinsic Lagrangian/calibration condition]
however, they become important in [L-Y7].
Such notion for an associative unital algebra appeared earlier
 in, e.g., [C-Q] and [K-R].

\begin{definition}
{\bf [${\Bbb C}$-linear derivation].} {\rm
 Let $S$ be an associative (unital) ${\Bbb C}$-algebra and
  $M$ be a (two-sided) $S$-module.
 A map $d:S\rightarrow M$ (as abelian groups) is called
  a {\it ${\Bbb C}$-linear derivation}
  if it is a homomorphism of ${\Bbb C}$-modules that satisfies
  the Leibniz rule
  $$
   d(fg)\;=\; (df)g + fdg \hspace{1em}\mbox{for $f,g\,\in\, S$}\,.
  $$

}\end{definition}

\begin{definition}
{\bf [K\"{a}hler differential].} {\rm
 Let $M_r(R)$ be the $r\times r$ matrix ring over
  a commutative ${\Bbb C}$-algebra $R$.
 Denote by $\Omega_{M_r(R)}$ the {\it
  module of (K\"{a}hler) differentials of $M_r(R)$ over ${\Bbb C}$}
  the (two-sided) $M_r(R)$-module
   generated by the set $\{dm\,:\, m\in M_r(R)\}$
   subject to the relations
   $$
    \begin{array}{llcl}
     \mbox{(${\Bbb C}$-linearity)}
       & \hspace{-2.2em}
         d(am+bm^{\prime}) & =
       & a\,dm\,+\,b\,dm^{\prime} \hspace{1em}
         \mbox{for $a,b\,\in\,{\Bbb C}$
               and $m,m^{\prime}\,\in\,M_r(R)$}\,,\\[.2ex]
     \mbox{(Leibniz rule)}
       & d(mm^{\prime}) & =
       & (dm)m^{\prime}\,+\, m\,dm^{\prime}\,,\\[.2ex]
     \mbox{(commutativity pass-over)}
       & m(dm^{\prime}) & =
       & (dm^{\prime})m \hspace{1em}
         \mbox{for $m,m^{\prime}\,\in\, M_r(R)$}\\
     &&& \hspace{7em}\mbox{that commutes: $mm^{\prime}=m^{\prime}m$}\,.
    \end{array}
   $$
 By construction, it is equipped with a built-in
  ${\Bbb C}$-linear derivation
  $$
   d\;:\;M_r(R)\; \longrightarrow\; \Omega_{M_r(R)}
     \hspace{1em}\mbox{defined by}\hspace{1em} m\;\longmapsto\; dm\,.
  $$
} \end{definition}

\begin{definition} {\bf [tensor product].} {\rm
 The {\it $s$-fold tensor product} $\otimes_{M_r(R)}^s\Omega_{M_r(R)}$
  of $\Omega_{M_r(R)}$ over $M_r(R)$ for $s\in{\Bbb Z}_{\ge 0}$
  is the bi-$M_r(R)$-module with
   generators $dm_1\otimes\,\cdots\,\otimes dm_s$, $m_j\in M_r(R)$,
    and
   relators
   $$
    dm_1\otimes\,\cdots\,\otimes
                  (dm_i)m \otimes dm_{i+1}
                      \otimes\,\cdots\,\otimes dm_s
                = dm_1\otimes\,\cdots\,\otimes
                   dm_i\otimes m dm_{i+1}
                      \otimes\,\cdots\,\otimes dm_s\,,
   $$
   $i=1,\,\ldots\,,\,s-1$, for all $m$, $m_j\in M_r(R)$.
}\end{definition}

{For} $r=1$, the above reduces to the usual notion of
 K\"{a}hler differentials and their tensor products
 in commutative ring theory; e.g., [Ei] and [Ma].

As the above construction commutes with central localizations
 of $M_r(R)$, one has the following sheaves
 via central localizations and gluings:

\begin{lemma-definition}
{\bf [sheaf of K\"{a}hler differential].}
 Given an Azumaya scheme
  $X^{\!A\!z}=(X,{\cal O}_X^{A\!z}=\Endsheaf_{{\cal O}_X}{\cal E})$,
  one obtains a sheaf $\Omega_{X^{\!A\!z}}$ on $X^{\!A\!z}$ via gluing
  $\Omega_{M_r(R)}$'s from affine open sets $U=\Spec R$ of $X$.
 {\rm $\Omega_{X^{\!A\!z}}$ is called
  the {\it sheaf of K\"{a}hler differentials} on $X^{\!A\!z}$.}
 Similarly, one has its tensor products
  $\otimes_{{\cal O}_{X^{\!A\!z}}}^s\Omega_{X^{\!A\!z}}
   =: \Omega_{X^{\!A\!z}}^{\otimes s}$
     over ${\cal O}_X^{A\!z}$ for $s\in {\Bbb Z}_{\ge 0}$.
\end{lemma-definition}

Similar construction applies to Azumaya $C^{\infty}$-manifolds,
 in which case we will call the resulting K\"{a}hler differentials
 also {\it $1$-forms}.

In terms of this, the commutativity-pass-over property of K\"{a}hler
 differentials/$1$-forms implies the following induced map
 defined in exactly the same way as in the commutative case:

\begin{lemma}
 Given a morphism $\varphi:X^{\!A\!z}\rightarrow Y$
  either from an Azumaya scheme to a (commutative) scheme
  or from a $C^{\infty}$ Azumaya manifold to a complex manifold,
 then there is an induced map
  $\varphi^{\ast}:\Omega_Y\rightarrow \Omega_{X^{\!A\!z}}$
  as ${\cal O}_Y$-modules, defined locally by
  $df \mapsto d(\varphi^{\sharp}(f))$.
 Here, $\varphi^{\sharp}:{\cal O}_Y\rightarrow {\cal O}_X^{A\!z}$
  is the defining pull-back-of-functions underlying $\varphi$.
 Similarly; for the existence of
  $\varphi^{\ast}:\Omega_Y^{\otimes s}
                  \rightarrow \Omega_{X^{\!A\!z}}^{\otimes s}$.
\end{lemma}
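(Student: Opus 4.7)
The plan is to define $\varphi^*$ locally on generators of $\Omega_Y$ and then verify it respects the defining relations, the decisive point being that the commutativity-pass-over axiom of $\Omega_{X^{\!A\!z}}$ exactly matches the commutativity of ${\cal O}_Y$. First I would cover $Y$ by affine opens $V=\Spec S$ (resp.\ holomorphic coordinate charts in the $C^\infty$ case), and correspondingly cover $X$ by affine opens $U=\Spec R$ (resp.\ $C^\infty$ coordinate charts) fine enough that ${\cal O}_X^{A\!z}|_U\cong M_r(R)$ and the restriction of $\varphi^\sharp$ factors through the commutative surrogate ${\cal A}_\varphi|_U \subset M_r(R)$. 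On such a local piece, set
\[
\varphi^{*}\colon \Omega_S \longrightarrow \Omega_{M_r(R)},\qquad g\,df\;\longmapsto\; \varphi^{\sharp}(g)\,d(\varphi^{\sharp}(f)).
\]

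The main verification is well-definedness. Since $\Omega_S$ is generated by $\{df\}$ modulo ${\Bbb C}$-linearity, the Leibniz rule, and the tautological commutativity $(df)g=g\,df$ (automatic because $S$ is commutative), I would check each relation in turn. ${\Bbb C}$-linearity is immediate from $\varphi^{\sharp}$ being a ${\Bbb C}$-algebra map. For Leibniz, I compute $\varphi^{*}(d(fg)) = d(\varphi^{\sharp}(f)\varphi^{\sharp}(g)) = d(\varphi^{\sharp}(f))\,\varphi^{\sharp}(g) + \varphi^{\sharp}(f)\,d(\varphi^{\sharp}(g))$ by the Leibniz rule of $\Omega_{M_r(R)}$, which must be matched against $\varphi^{*}((df)g + f\,dg) = d(\varphi^{\sharp}(f))\,\varphi^{\sharp}(g) + \varphi^{\sharp}(f)\,d(\varphi^{\sharp}(g))$; the identity holds on the nose. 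For the commutativity relation I need $\varphi^{\sharp}(g)\,d(\varphi^{\sharp}(f)) = d(\varphi^{\sharp}(f))\,\varphi^{\sharp}(g)$. The hard part will be precisely this compatibility, and it is exactly what the commutativity-pass-over axiom of $\Omega_{M_r(R)}$ delivers: the images $\varphi^{\sharp}(f)$ and $\varphi^{\sharp}(g)$ lie in the commutative subalgebra ${\cal A}_\varphi|_U$, hence commute, so the axiom applies and the desired equality holds.

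Finally, since the local construction depends only on the ring-homomorphism datum and commutes with central localizations of both $S$ and $M_r(R)$, the local maps glue to a global homomorphism of ${\cal O}_Y$-modules $\varphi^{*}\colon \Omega_Y \to \Omega_{X^{\!A\!z}}$, with the target viewed as an ${\cal O}_Y$-module via $\varphi^{\sharp}$ — the left/right bimodule ambiguity again collapsing on the image of $\varphi^{\sharp}$ by commutativity pass-over. For the tensor product statement, extend $\varphi^{*}$ to $\Omega_Y^{\otimes s} \to \Omega_{X^{\!A\!z}}^{\otimes s}$ by $df_1\otimes\cdots\otimes df_s \mapsto d(\varphi^{\sharp}(f_1))\otimes\cdots\otimes d(\varphi^{\sharp}(f_s))$ and extend ${\cal O}_Y$-linearly in each slot; the move-across-tensor relation of Definition~3.1.8 is verified by the same commutativity argument, since the scalars being transferred are themselves in the image of $\varphi^{\sharp}$ and therefore commute with every adjacent $d(\varphi^{\sharp}(f_i))$. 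The $C^\infty$ case is formally identical: the argument is local in $X$ and uses only that $\varphi^{\sharp}$ is a ${\Bbb C}$-algebra map factoring through a commutative subsheaf, so it carries over verbatim with ${\cal O}_X={\cal O}_X^{\infty}\otimes_{\Bbb R}{\Bbb C}$ replacing the algebraic structure sheaf.
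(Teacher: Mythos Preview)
Your proposal is correct and follows exactly the approach the paper indicates: the paper does not give a separate proof, but simply prefaces the lemma with the remark that ``the commutativity-pass-over property of K\"{a}hler differentials/$1$-forms implies the following induced map defined in exactly the same way as in the commutative case,'' and you have spelled out precisely that verification. One small simplification: you do not need to invoke the surrogate ${\cal A}_\varphi$ to see that $\varphi^{\sharp}(f)$ and $\varphi^{\sharp}(g)$ commute --- since $\varphi^{\sharp}$ is a ring homomorphism and ${\cal O}_Y$ is commutative, $\varphi^{\sharp}(f)\varphi^{\sharp}(g)=\varphi^{\sharp}(fg)=\varphi^{\sharp}(gf)=\varphi^{\sharp}(g)\varphi^{\sharp}(f)$ directly, which is all the commutativity-pass-over axiom requires.
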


\bigskip

\subsection{Lagrangian morphisms and special Lagrangian morphisms:\\
    Donaldson and Polchinski-Grothendieck.}

\begin{definition}
{\bf [Lagrangian/special Lagrangian morphism].} {\rm
 Let
  $X$ be a smooth manifold of (real) dimension $n$ and
  $(Y,\omega)$ be a complex manifold of (complex) dimension $n$
   from a projective variety$/{\Bbb C}$
   with a K\"{a}hler form $\omega$.
 A morphism $\varphi:(X^{\!A\!z},{\cal E})\rightarrow Y$
  is said to be a {\it Lagrangian morphism}
  if in its Aspect~II presentation, say, by
  a torsion sheaf $\widetilde{\cal E}$ of
    ${\cal O}_X\otimes_{\Bbb R}{\cal O}_Y$-modules on $X\times Y$,
  the following conditions are satisfied:
  \begin{itemize}
   \item[(1)] ({\it generically immersion})\hspace{1em}
    the restriction of the projection map
     $\pr_2:X\times Y\rightarrow Y$
    to $(\Supp\widetilde{\cal E})_{\redscriptsize}$
     is an immersion on a dense open subset;

   \item[(2)] ({\it Lagrangian condition})\hspace{1em}
    the restriction of $\pr_2^{\ast}\omega$ to
     $(\Supp\widetilde{\cal E})_{\redscriptsize}$ vanishes.
  \end{itemize}

 If furthermore $(Y,\omega)$ is a Calabi-Yau manifold
  with a calibration given by $\Real\Omega$ of
  a holomorphic $n$-form $\Omega$,
 then $\varphi:(X^{\!A\!z},{\cal E})\rightarrow Y$
  is said to be a {\it special Lagrangian morphism}
 if it is a Lagrangian morphism and
  \begin{itemize}
   \item[(3)] ({\it calibration condition})\hspace{1em}
    $\pr_2^{\ast}\Real\Omega=\pr_2^{\ast}\vol_n$
    holds on an open dense subset of
    $(\Supp\widetilde{\cal E})_{\redscriptsize}$.
  \end{itemize}
 Here, $\vol_n$ is the real volume-$n$-form associated to
  the K\"{a}hler metric of $Y$ from $\omega$.
}\end{definition}

\begin{remark}{\rm
 [$\,${\it intrinsic Lagrangian/calibration condition}$\,$].
 In terms of $1$-forms on $X^{\!A\!z}$,
  the Lagrangian and the calibration condition
  can be expressed more intrinsically/compactly as
  $$
   \varphi^{\ast}\omega\,=\, 0
    \hspace{1em}\mbox{and}\hspace{1em}
   \varphi^{\ast}\Real\Omega\,=\,\varphi^{\ast}\vol_n\,.
  $$
}\end{remark}

\bigskip

{To} reflect the behavior of D-branes of A-type correctly,
we need a notion of ``multiplicity" of a Lagrangian cycle
 that takes into account not just the cycle alone
 but also the bundle/sheaf it carries.
The following version of such is what we will use.\footnote{C.-H.L.\
                would like to thank Katrin Wehrheim for a discussion
                on coincident Lagrangian submanifolds and their
                multiplicities, spring 2008.}

\begin{definition}
{\bf [$\varphi_{\ast}[X]$ and $\varphi_{\ast}[(X,{\cal E})]$
      for Lagrangian $\varphi$].}
{\rm
 Let
  $X$ be oriented with the associated fundamental cycle $[X]$,
  $\varphi:(X^{\!A\!z},{\cal E})\rightarrow Y$ a morphism,
  $\widetilde{\cal E}$ on $X\times Y$ its presentation,  and
  $\pr_1:X\times Y\rightarrow X$,
  $\pr_2:X\times Y\rightarrow Y$ the projection maps.
 Then the orientation on $X$ induces an orientation on the
  $n$-dimensional strata of
  $(\Supp\widetilde{\cal E})_{\redscriptsize}$.
 This defines a fundamental cycle
  $[(\Supp\widetilde{\cal E})_{\redscriptsize}]$
  of $(\Supp\widetilde{\cal E})_{\redscriptsize}$.
 Define $\varphi_{\ast}[X]$ to be the (real) $n$-cycle
  $\pr_{2\ast}[(\Supp\widetilde{\cal E})_{\redscriptsize}]$ on $Y$.

 If furthermore $\varphi$ is a Lagrangian morphism,
  then let
   $$
    [(\Supp\widetilde{\cal E})_{\redscriptsize}]\;
     =\; \sum_{\alpha}\Delta_{\alpha}
   $$
   be a fine enough triangulation of
    $(\Supp\widetilde{\cal E})_{\redscriptsize}$
   such that the following hold:
    \begin{itemize}
     \item[(1)]
      $\sum_{\alpha}\Delta_{\alpha}$
       refines the manifold-stratification of
        $(\Supp\widetilde{\cal E})_{\redscriptsize}\,$;

     \item[(2)]
      for each $n$-simplex $\Delta_{\alpha}$
       as a subset of $X\times Y$,
      $\pr_1:\Delta_{\alpha}\rightarrow X$ is an embedding;

     \item[(3)]
      for each $n$-simplex $\Delta_{\alpha}$
        as a subset of $X\times Y$,
       define $l(p)$, $p\in \Delta_{\alpha}$, to be the length
       of the stalk $(\widetilde{\cal E}|_{\{pr_1(p)\}\times Y})_p$
       on $\{\pr_1(p)\}\times Y$;
      then the function $l(\,\bullet\,)$
       is constant in the interior of $\Delta_{\alpha}$;
      denote this constant by $l_{\alpha}$.
    \end{itemize}
 Define
  $$
   \varphi_{\ast}[(X,{\cal E})]\; :=\;
    \pr_{2\ast}
     \left(\sum_{\alpha}\,l_{\alpha}\,\Delta_{\alpha}\right)\,.
  $$
 By taking a common refinement of triangulations of
  $(\Supp\widetilde{\cal E})_{\redscriptsize}$,
 it is clear that $\varphi_{\ast}[(X,{\cal E})]$ is well-defined.
}\end{definition}

\begin{notation} {\rm {\bf
  [Chan-Paton-sheaf-adjusted multiplicity of Lagrangian cycle].}
 With the notation from above, we will denote
 a Lagrangian cycle of the form $\varphi_{\ast}[(X,{\cal E})]$
  also as $[(\varphi_{\ast}[X],\varphi_{\ast}{\cal E})]$
  to emphasize this adjustment of multiplicities
  along the Lagrangian cycle $\varphi_{\ast}[X]$ due to
  the Chan-Paton bundle/sheaf $\varphi_{\ast}{\cal E}$ over it.
}\end{notation}

\bigskip

Recall Donaldson's description [Don] of Lagrangian submanifolds/cycles
  and special Lagrangian submanifolds/cycles $L$
  in a Calabi-Yau manifold $Y$
 as the image of a special class of maps $f$
  from a smooth manifold $S$, equipped with a volume-form $\sigma$,
  to $Y$,
 selected by a moment map associated to the (right) action of
  the group $\Diff(S,\sigma)$ of volume-preserving diffeomorphisms
  of $(S,\sigma)$
  on the space $\Map(S,Y)$ of smooth maps from $S$ to $Y$ by
  precomposition of maps;
 see also [Hi2].
The aspect of treating Lagrangian or special Lagrangian $L\subset Y$
  as the image $f(S)$ of $f:S\rightarrow Y$  and
 the fact that such $L$ are candidates for supersymmetric D-branes
 naturally make one wonder:
 \begin{itemize}
  \item[$\cdot$] {\bf Q.}\
  \parbox[t]{13cm}{\it {\bf [Donaldson + Polchinski-Grothendieck]}\\
   Can Donaldson's aspect of Lagrangian and special Lagrangian
    submanifolds/cycles
   and Polchinski-Grothendieck's aspect of D-branes
   merge?}
 \end{itemize}
In this subsection, we discuss a special class of morphisms $\varphi$
 from Azumaya spaces $(X^{\!A\!z}, {\cal E})$
  with a fundamental module over a fixed $X$
  to a Calabi-Yau manifold $Y$
 when the answer is yes.
The symplectic construction needed in the discussion
 follows [Don].
{For} simplicity of presentation, we
 identify a vector bundle $V$
  with its sheaf ${\cal V}$ of local sections and
 denote both by the sheaf notation ${\cal V}$.

\bigskip

Let
 $(X,\sigma_X)$ be a smooth manifold of real dimension $n$
   with a volume-form $\sigma_X$,
 $S$ be a smooth manifold of real dimension $n$,
 ${\cal V}$ be a (smooth) complex vector bundle on $S$
   of (${\Bbb C}$-)rank $r_0$,  and
 $(Y, \omega,\Omega)$ be a Calabi-Yau manifold of complex dimension $n$
  with a K\"{a}hler form $\omega$ and a holomorphic $n$-form $\Omega$.
Let $\pr_1:X\times Y\rightarrow X$ and $\pr_2:X\times Y$
 be the projection maps.
Then a smooth maps $(c,g):S \rightarrow X\times Y$,
  where $c:S\rightarrow X$ is a finite cover of $X$, and
        $g:S\rightarrow Y$,
 defines a torsion sheaf
  $\widetilde{\cal E}_{(c,g)}:= (c,g)_{\ast}{\cal V}$ on $X\times Y$.
Its pushforward
 ${\cal E}_{(c,g)}:= \pr_{1\ast}\widetilde{\cal E}_{(c,g)}
  = c_{\ast}{\cal V}$
 is a complex vector bundle on $X$ of rank $r=dr_0$,
 where $d$ is the degree of $c$.
It follows from Sec.~2.2 that
 $(c,g)$ induces
 a morphism
  $$
   \varphi_{(c,g)}\;:\;
    (X^{\!A\!z}_{(c,g)},{\cal E}_{(c,g)})\; \longrightarrow\; Y
  $$
  with surrogates $X_{\varphi_{(c,g)}} = (c,g)(S)$
  and maps $\pi_{\varphi}:X_{\varphi_{(c,g)}}\rightarrow X$
       and $f_{\varphi_{(c,g)}}:X_{\varphi_{(c,g)}}\rightarrow Y$
  induced by $\pr_1$ and $\pr_2$ respectively;
denote the built-in map $S\rightarrow X_{\varphi_{(c,g)}}$
 by $h_{(c,g)}$:
$$
 \xymatrix{
  {\cal V} \ar@{.>}[d]
        & & & & & \\
  S \ar[rd]^{h_{(c,g)}} \ar@/^2ex/[rrrrrd]^-g \ar@/_/[rddd]_-c
          & & \\
        & X_{\varphi_{(c,g)}}
          \ar[rrrr]_-{f_{\varphi_{(c,g)}}}
          \ar[dd]^-{\pi_{\varphi_{(c,g)}}}
          & & & & Y  \\  \\
        & X
 }
$$
Let $\Map^{\scriptsizecover/X}(S,X\times Y)$
 be the space of smooth maps $S\rightarrow X\times Y$
 such that its first component $S\rightarrow X$ is a cover of $X$.
This is an infinite-dimensional smooth manifold
 locally modelled on
 $C^{\infty}(S,c^{\ast}T_{\ast}X\oplus g^{\ast}T_{\ast}Y)$
 at a point $[(c,g)]$.
The correspondence
  $$
    (c,g)\;\longmapsto\;
    \left( \varphi_{(c,g)}:(X^{\!A\!z}_{(c,g)},{\cal E}_{(c,g)})
           \rightarrow Y \right)
  $$
 defines then a map
  $$
   \Map^{\scriptsizecover/X}(S,X\times Y)\;
    \longrightarrow\; \Map^{\!A\!z^{\!f}}\!(X,Y)
  $$
 from $\Map^{\scriptsizecover/X}(S,X\times Y)$
 into the space $\Map^{\!A\!z^{\!f}}\!(X,Y)$ of morphisms
  from Azumaya noncommutative manifolds
   with a fundamental module, supported on the fixed $X$, to $Y$.

Let $\sigma_c=c^{\ast}\sigma_X$ be a volume-form on $S$
 by lifting $\sigma_X$ via $c$.
Then, the $\Diff(S,\sigma_c)$-action on $\Map(S,Y)$ in [Don],
  recalled in the beginning of this subsection,
 induces a $\Diff(S,\sigma_c)$-action
 on $\Map^{\scriptsizecover/X}(S,X\times Y)$
 by acting on the second component $g$ of $(c,g)$.

\begin{lemma}
{\bf [$h_{(c,g)}$ and $\pi_{\varphi_{(c,g)}}$: generically covers].}
 There exists an open dense subset $U$ of $X$ such that:
  \begin{itemize}
   \item[$(1)$]
    $\pi_{\varphi_{(c,g)}}^{-1}(U)$
     is open dense in $X_{\varphi_{(c,g)}}$  and
    $c^{-1}(U)=h^{-1}(\pi_{\varphi_{(c,g)}}^{-1}(U))$
     is open dense in $S$;

   \item[$(2)$]
    the restrictions
     $\,h_{(c,g)}: c^{-1}(U)
         \rightarrow \pi_{\varphi_{(c,g)}}^{-1}(U)\,$ and
     $\,\pi_{\varphi_{(c,g)}}:
        \pi_{\varphi_{(c,g)}}^{-1}(U) \rightarrow U\,$
    are covering maps.
  \end{itemize}
\end{lemma}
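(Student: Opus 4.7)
The plan is to take $U$ to be the open dense subset of $X$ over which the sheet-count of $\pi_{\varphi_{(c,g)}}$ is locally constant, and then to transfer covering structure from the given finite cover $c$ to both $h_{(c,g)}$ and $\pi_{\varphi_{(c,g)}}$ via a simultaneous local trivialization of the three maps.

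First I would note that $c$ being a smooth finite cover makes it a local diffeomorphism, so that $(c,g): S \to X\times Y$ is a smooth immersion of the $n$-manifold $S$. Combined with the Aspect~II manifold-stratification hypothesis on $X_{\varphi_{(c,g)}}=(\Supp\widetilde{\cal E})_{\redscriptsize}$, the top-dimensional stratum $X_{\varphi}^{\mathrm{top}}$ is open dense and $n$-dimensional, and $\pi_{\varphi_{(c,g)}}$ restricts there to a local diffeomorphism. Indeed, $\pi_{\varphi_{(c,g)}} \circ h_{(c,g)} = c$ is a local diffeomorphism at every $s\in S$, and a direct tangent-space check shows $(\pr_1)_\ast$ is an isomorphism on $T X_{\varphi}^{\mathrm{top}}$.

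Next I would define $\mu: X \to \{1,\ldots,d\}$ by $\mu(x) := |\pi_{\varphi_{(c,g)}}^{-1}(x)|$, i.e.\ the number of distinct values of $g$ on the $d$-element fibre $c^{-1}(x)$. This $\mu$ is lower semicontinuous: if $g$ separates $c^{-1}(x_0)$ into $\mu(x_0)=k$ classes, then by continuity of $g$ and local trivializability of $c$ near $x_0$, pairs of sheets with distinct $g$-values at $x_0$ remain distinct on a neighborhood, so $\{\mu \ge k\}$ is open. Setting
$$
 U\;:=\;X\,\setminus\,\bigcup_{k=2}^{d}\partial\{\mu\ge k\}\,,
$$
a finite union of boundaries of open sets (each nowhere dense), we obtain $U$ open and dense, with $\mu$ locally constant on $U$.

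Over any sufficiently small connected open $V\subset U$, local constancy of the integer $\mu$ upgrades to local constancy of the full partition of $c^{-1}(x)$ into $g$-equivalence classes: two sheets agreeing under $g$ at $x_0\in V$ must continue to agree throughout $V$, for otherwise $\mu$ would strictly increase nearby and contradict local constancy; whereas sheets with distinct $g$-values at $x_0$ stay distinct by continuity. This produces a simultaneous trivialization of $c$ over $V$ into $d$ disjoint sheets, of $\pi_{\varphi_{(c,g)}}$ over $V$ into $k$ disjoint sheets (with $k$ the locally constant value of $\mu$), and of $h_{(c,g)}$ as the tautological collapse of sheets onto classes; both $\pi_{\varphi_{(c,g)}}|_{\pi_{\varphi_{(c,g)}}^{-1}(U)}$ and $h_{(c,g)}|_{c^{-1}(U)}$ are thereby exhibited as covering maps. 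Density of $c^{-1}(U)$ in $S$ is immediate from density of $U$ together with $c$ being a cover; density of $\pi_{\varphi_{(c,g)}}^{-1}(U)$ in $X_{\varphi_{(c,g)}}$ follows by splitting its complement into the portion in $X_{\varphi}^{\mathrm{top}}$ (nowhere dense there because $\pi_{\varphi_{(c,g)}}$ is a local diffeomorphism and $X\setminus U$ is nowhere dense in $X$) and the remainder (contained in the closed nowhere-dense union of lower strata). The delicate step, where I expect to spend the most care, is this upgrade from local constancy of $\mu$ to local constancy of the full sheet-partition; once that is in hand, the rest is covering-space formalism applied to the explicit local model.
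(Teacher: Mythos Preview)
Your argument is correct and rests on the same fact the paper invokes --- namely that $c=\pi_{\varphi_{(c,g)}}\circ h_{(c,g)}$ is a covering map of smooth spaces --- but you actually carry out the argument, whereas the paper's proof is a single sentence pointing to that factorization and to smoothness of the maps involved. Your construction of $U$ via the level sets of the sheet-count function $\mu$, and the upgrade from local constancy of $\mu$ to local constancy of the sheet-partition (which you correctly flag as the step requiring care, and which does go through on sufficiently small $V$ because distinct $g$-values persist while coincident ones cannot split without raising $\mu$), is a clean way to make the paper's sketch precise. One minor simplification: density of $\pi_{\varphi_{(c,g)}}^{-1}(U)$ in $X_{\varphi_{(c,g)}}$ follows more directly from $h_{(c,g)}(c^{-1}(U))\subset \pi_{\varphi_{(c,g)}}^{-1}(U)$ together with surjectivity and continuity of $h_{(c,g)}$ and density of $c^{-1}(U)$ in $S$, so you need not route through the top-stratum discussion for that point.
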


\noindent
This follows from the fact
 that $(c,g)$ and
  hence $h_{(c,g)}$ and $\pi_{\varphi_{(c,g)}}$ are all smooth maps
 and that $c=\pi_{\varphi_{(c,g)}}\circ h_{(c,g)}$ is a covering map.
Recall the volume-form
 $\sigma_{X_{\varphi_{(c,g)}}}$ on the possibly singular
 $X_{\varphi_{(c,g)}}$ from lifting $\sigma_X$
 via $\pi_{\varphi_{(c,g)}}$.
The above lemma implies immediately:

\begin{corollary}
{\bf [$\varphi_{(c,g)}$ and $g$:
      same Lagrangian/special Lagrangian property].}\\
 {\rm (1)}
  $f_{\varphi_{(c,g)}}^{\ast}\omega =0$
  if and only if $g^{\ast}\omega=0$. \\
 {\rm (2)}
  $f_{\varphi_{(c,g)}}^{\ast}\Real\Omega
   = \sigma_{X_{\varphi_{(c,g)}}}$
   (resp.\ $f_{\varphi_{(c,g)}}^{\ast}\Imaginary\Omega =0$)
  if and only if
  $g^{\ast}\Real\Omega=\sigma_c$
   (resp.\ $g^{\ast}\Imaginary\Omega=0$).

 In particular,
  $\varphi_{(c,g)}$ is a Lagrangian morphism
    if and only if $g$ is a Lagrangian morphism; and
  $\varphi_{(c,g)}$ is a special Lagrangian morphism
    if and only if $g$ is a special Lagrangian morphism.
\end{corollary}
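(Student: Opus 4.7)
The plan is to reduce both equivalences to elementary pull-back manipulations made possible by the built-in factorization
\[
 c \;=\; \pi_{\varphi_{(c,g)}}\circ h_{(c,g)}\,, \qquad
 g \;=\; f_{\varphi_{(c,g)}}\circ h_{(c,g)}
\]
of the defining diagram, combined with the generic-covering property from Lemma 3.2.5. First I would record the resulting chain-rule identities on $S$: for every differential form $\eta$ on $Y$,
\[
 g^{\ast}\eta \;=\; h_{(c,g)}^{\ast}\,f_{\varphi_{(c,g)}}^{\ast}\eta\,,
\]
 and likewise $\sigma_c = c^{\ast}\sigma_X = h_{(c,g)}^{\ast}\,\sigma_{X_{\varphi_{(c,g)}}}$, where $\sigma_{X_{\varphi_{(c,g)}}}:=\pi_{\varphi_{(c,g)}}^{\ast}\sigma_X$ is defined at least over the open dense subset $\pi_{\varphi_{(c,g)}}^{-1}(U) \subset X_{\varphi_{(c,g)}}$ on which $\pi_{\varphi_{(c,g)}}$ is a covering map.

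The ``only if'' directions of both (1) and (2) then follow immediately by pulling each identity on $X_{\varphi_{(c,g)}}$ back to $S$ via $h_{(c,g)}$. For the ``if'' directions the key point is that $h_{(c,g)}$, restricted to the open dense subset $c^{-1}(U)\subset S$, is a covering map onto $\pi_{\varphi_{(c,g)}}^{-1}(U)\subset X_{\varphi_{(c,g)}}$ by Lemma~3.2.5, hence a local diffeomorphism. Consequently $h_{(c,g)}^{\ast}$ is pointwise injective on differential forms over this locus, so vanishing (resp.\ equality) of the pull-back on $c^{-1}(U)$ descends to vanishing (resp.\ equality) on $\pi_{\varphi_{(c,g)}}^{-1}(U)$. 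Since $\pi_{\varphi_{(c,g)}}^{-1}(U)$ is open and dense in $X_{\varphi_{(c,g)}}$ and the forms in question are continuous on each smooth stratum, the conclusion extends to the relevant dense open subset of $X_{\varphi_{(c,g)}}$, which is exactly the regime in which Definition~3.2.1 phrases its Lagrangian and calibration conditions.

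For the ``in particular'' clause, I would then match the equivalences just proved against the three items of Definition~3.2.1. The generic-immersion condition (1) for $\varphi_{(c,g)}$, formulated via $\pr_2$ on $(\Supp\widetilde{\cal E}_{(c,g)})_{\scriptsize\rm red} = (c,g)(S)$, is equivalent to the same generic-immersion statement for $g$ on the dense open subset $c^{-1}(U)$, again because $h_{(c,g)}$ is a local diffeomorphism there. Conditions (2) and (3) are precisely the two vanishing/equality identities already handled. Hence $\varphi_{(c,g)}$ is (special) Lagrangian if and only if $g$ is (special) Lagrangian with respect to $\sigma_c$ in the sense of Donaldson [Don].

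The main obstacle I anticipate is the careful bookkeeping over the possibly singular locus of $X_{\varphi_{(c,g)}}$: one must argue that ``holds on an open dense subset of the stratified support'' is preserved under the descent through $h_{(c,g)}$ and through the push-forward by $\pr_2$ of Definition~3.1.3. Fortunately Definition~3.2.1 has already been phrased with exactly this open-dense-subset quantifier, so no ad hoc regularization across singular strata is needed; the argument remains purely a combination of the chain rule and the local-diffeomorphism property of covering maps.
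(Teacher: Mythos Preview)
Your proposal is correct and matches the paper's own approach: the paper simply states that ``the above lemma implies immediately'' the corollary, referring to Lemma~3.2.5, and your argument is precisely the natural unpacking of that implication via the factorizations $g = f_{\varphi_{(c,g)}}\circ h_{(c,g)}$ and $c = \pi_{\varphi_{(c,g)}}\circ h_{(c,g)}$ together with the local-diffeomorphism property of $h_{(c,g)}$ on the dense open subset.
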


\noindent
Thus,
 Donaldson's picture of Lagrangian/special Lagrangian submanifolds
  in a Calabi-Yau space  and
 Polchinski-Grothendieck's picture of supersymmetric D-branes of A-type
  in a Calabi-Yau space are tied together
 for a special class of such submanifolds/branes.
The construction of Donaldson
 in [Don: Sec.~1.1 and Sec.~3.1]
 can now be applied to this special submoduli space of
  morphisms from Azumaya manifolds with the fixed base $X$
  to the Calabi-Yau manifold $Y$
 to characterize Lagrangian morphisms,
with special Lagrangian morphisms in this special class
 selected further by the calibration condition.

{To} proceed,
let
 $\Cover(S,X)$ be the space of covering maps $S\rightarrow X$,
 $C_0:\Cover(S,X)\times S\rightarrow X$ the universal covering map, and
 $$
  \Map^{\scriptsizecover/X}(S,X\times Y)\,
  =\, \Cover(S,X)\times \Map(S,Y)\; \longrightarrow\; \Cover(S,X)
 $$
  the forgetful map.
A connected component of $\Map^{\scriptsizecover/X}(S,X\times Y)$
 is a product of that of $\Cover(S,X)$ and that of $\Map(S,Y)$.
{For} each connected component of
 $\Map^{\scriptsizecover/X}(S,X\times Y)$
  with a base-section
  $[(C_0;g_0):\Cover(S,X)\times S\rightarrow X\times Y]$
  over $\Cover(S,X)$,
 where $g_0: \Cover(S,X)\times S\rightarrow Y$
  is induced from a map, also denoted by $g_0\in \Map (S,Y)$,
  via the composition
   $\Cover(S,X)\times S\rightarrow S\stackrel{g_0}{\rightarrow} Y$,
fix a base-reference relative $2$-form $\nu$
 in the relative de Rham cohomology class
 $g_0^{\ast}([\omega])
  \in H^2((\Cover(S,X)\times S)/\Cover(S,X);{\Bbb R})$.
Let
 $$
  \Diff( (\Cover(S,X)\times S , C_0^{\ast}\sigma_X) / \Cover(S,X) )
 $$
  be the relative group
  of relative-volume-preserving $S$-bundle-diffeomorphisms
  of $(\Cover(S,X)\times S, C_0^{\ast}\sigma_X)$ over $\Cover(S,X)$
  and
 $$
  \Calabi\;:\;
   \Diff( (\Cover(S,X)\times S , C_0^{\ast}\sigma_X) / \Cover(S,X) )\;
     \longrightarrow\; H^{n-1}(S;{\Bbb R})/H^{n-1}(S;{\Bbb Z})
 $$
 be a relative Calabi-homomorphism.
Then ${\cal G}_0 := \Ker(\Calabi)$
 is a relative Lie group over $\Cover(S,X)$
 whose relative Lie algebra ${\frak G}_0$
 can be identified with the relative exact $(n-1)$-forms
 on $(\Cover(S,X)\times S)/\Cover(S,X)$.

{For} $g:S\rightarrow Y$ in the same connected component as $g_0$,
 $g^{\ast}([\omega])=g_0^{\ast}([\omega])$;
thus, one can choose an $a\in \Omega^1(S)$
 so that $g^{\ast}\omega-\nu=da$.
Fix a such $a$ for $g$.
Then for any $c\in \Cover(S,X)$ and
 any vector field $\xi$ on $S$, one can define a pairing
 $$
  \langle\,a\,,\,\xi\, \rangle\; :=\; \int_S\,a(\xi)\,\sigma_c\,.
 $$

\begin{proposition}
{\bf [moment map].} {\rm ([Don: Sec.~1.1].)}
 For a vector field $\xi$ on $S$ associated to an element
   in ${\frak G}_0$ over $c\in \Cover(S,X)$,
  the pairing $\langle a,\xi\rangle$
   depends only on $\xi$ (and $(c,g)$),not on the choice of $a$.
 The relative linear functional on ${\frak G}_0/\Cover(S,X)$
    defined by $\xi\mapsto \langle a,\xi\rangle$
  gives a map
  $$
   \mu\;:\;
    \Map^{\scriptsizecover/S}(S,X\times Y)\;
     \longrightarrow\; {\frak G}_0^{\ast}
   \hspace{1em}(\,\mbox{as spaces over $\Cover(S,X)$}\,)
  $$
  which is a relative moment map for the action of
  ${\cal G}_0$ on $\Map^{\scriptsizecover/X}(S,X\times Y)$
  over $\Cover(S,X)$.
\end{proposition}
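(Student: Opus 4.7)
The plan is to establish Proposition 3.2.7 by reducing to Donaldson's fiberwise moment map argument [Don: Sec.~1.1] and then checking smooth dependence on $c\in\Cover(S,X)$. Fix such a $c$; above it the data specializes to the volume form $\sigma_c$ on $S$, the identification of $\sigma_c$-preserving vector fields with closed $(n-1)$-forms via $\xi \leftrightarrow \iota_\xi\sigma_c$ (with ${\frak G}_0|_c$ corresponding to exact forms $d\gamma$), and the symplectic form $\Omega_g(\eta_1,\eta_2) := \int_S \omega(\eta_1,\eta_2)\,\sigma_c$ on $\Map(S,Y)$.

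First I would show independence of $\langle a,\xi\rangle$ from the choice of primitive. Using the pointwise identity $\alpha(\xi)\sigma_c = \alpha\wedge\iota_\xi\sigma_c$ valid for any $1$-form $\alpha$ on the closed $n$-manifold $S$, and writing $\iota_\xi\sigma_c = d\gamma$ by exactness for $\xi\in{\frak G}_0$, two primitives $a,a'$ of $g^\ast\omega-\nu$ satisfy
$$
\int_S (a-a')(\xi)\,\sigma_c \;=\; \int_S (a-a')\wedge d\gamma \;=\; \int_S d(a-a')\wedge\gamma \,-\, \int_S d\bigl((a-a')\wedge\gamma\bigr) \;=\; 0,
$$
with both terms vanishing by closedness of $a-a'$ and $\partial S=\emptyset$.

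Next I would verify the moment-map identity $d\mu^\xi = \iota_{X_\xi}\Omega$ pointwise in $g$. For $\eta\in\Gamma(g^\ast T_\ast Y)$ and a deformation $g_t$ with $\dot g_0=\eta$, closedness of $\omega$ gives $\partial_t(g_t^\ast\omega) = d(g^\ast\iota_\eta\omega)$, so $\dot a := g^\ast\iota_\eta\omega$ is an admissible variation, producing
$$
d\mu^\xi(\eta) \;=\; \int_S (g^\ast\iota_\eta\omega)(\xi)\,\sigma_c \;=\; \int_S \omega\bigl(\eta,\,dg(\xi)\bigr)\,\sigma_c.
$$
Since the infinitesimal action of $\xi$ at $g$ is $X_\xi|_g = dg(\xi)$, antisymmetry of $\omega$ identifies this, up to sign convention, with $\Omega_g(X_\xi,\eta)$, which is exactly the moment map equation.

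Finally I would promote the fiberwise conclusion to the relative one over $\Cover(S,X)$ using the fixed base-section $(C_0;g_0)$ and the single relative reference $2$-form $\nu$: on the chosen connected component each $(c,g)$ admits $a_{(c,g)}\in\Omega^1(S)$ depending smoothly (locally) on $(c,g)$, the identification of ${\frak G}_0/\Cover(S,X)$ with relative exact $(n-1)$-forms assembles smoothly in $c$, and the pointwise computations glue to the claimed relative moment map between bundles over $\Cover(S,X)$. The main obstacle I anticipate is precisely this family bookkeeping -- verifying that the relative symplectic form, the relative ${\cal G}_0$-action, and the local choices of primitive fit together smoothly so that Donaldson's pointwise calculation upgrades to a relative identity of sections of ${\frak G}_0^{\ast}$; the fiberwise calculation itself is essentially [Don: Sec.~1.1].
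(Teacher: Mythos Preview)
The paper does not supply its own proof of this proposition; it is stated with attribution to Donaldson [Don: Sec.~1.1] and the text moves on immediately. Your proposal correctly reconstructs Donaldson's fiberwise argument (independence of the primitive via integration by parts against exact $(n-1)$-forms, and the moment-map identity via Cartan's formula) and then handles the relative bookkeeping over $\Cover(S,X)$, which is precisely the extension the paper asserts without detail; this is the intended approach.
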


The locus
  ${\cal U}_{\scriptsizeLag}
     \subset \Map^{\scriptsizecover/X}(S,X\times Y)$
  of $(c,g)$
  whose associated $\varphi_{(c,g)}$ is a Lagrangian morphism
 lies in the zero-locus $\mu^{-1}(0)$ of the relative moment map $\mu$.
Inside ${\cal U}_{\scriptsizeLag}$ resides the locus
 ${\cal U}_{\scriptsizesLag}$ of $(c,g)$
 whose associated $\varphi_{(c,g)}$ is a special Lagrangian morphism.

\bigskip
\bigskip

\begin{flushleft}
{\large\bf String-theoretical remarks on Sec.~3.}
\end{flushleft}
(1) [{\it D-branes of A-type}$\,$: {\it cycles vs.\ maps}$\,$]

\medskip

\noindent
Due to the special Lagrangian submanifold nature
 of D-branes of A-type,
they are usually thought of as a sub-object in a Calabi-Yau manifold.
Donalson's picture changes that.
Thinking of them as maps into a target-space(-time)
 from an Azumaya manifold with a fundamental module
 makes it very direct to see how they behave
 under deformations or collidings-into-one.
It also gives us an anticipation of what structure should be there
 on these branes in its own right;
cf.~[D-K-S] and Sec.~4.2, theme:
`The generically filtered structure on the Chan-Patan bundle
 over a special Lagrangian cycle on a Calabi-Yau torus'
 of the current work.

\bigskip

\section{D-branes of A-type on a Calabi-Yau torus
         and their\\ transitions.}

So far in this project we have illustrated featural behaviors of
 D-branes of B-type as they fit well in the realm of algebraic geometry.
In this section\footnote{In
                 this section,
                 a $1$-{\it cycle}
                  means a cycle of ${\Bbb R}$-dimension $1$,
                 while the {\it rank of a complex vector bundle}
                  over a real manifold is by definition
                  the ${\Bbb C}$-rank.},
 we give an example of such behaviors for
 nonsupersymmetric D-branes and D-branes of A-type.
This example is only a toy model but has several pedagogical meanings.
Its simplicity allows one to see/learn things about D-branes
 in this category
 without being blocked/complicated by mathematical technicality
before one studies the same issue at the level of, e.g.,
 [Lee] and [Joy2].

\bigskip

Let
 ${\Bbb C}$ be the complex plane with coordinate $z$  and
 $C = C_{\tau}:={\Bbb C}/({\Bbb Z}+{\Bbb Z}\tau)$
 be the complex torus
  of modulus $\tau\in{\Bbb C}$,
   defined up to the $\SL(2,{\Bbb Z})$ transformation
    $\tau\mapsto (a\tau + b)/(c\tau+d)$,
    {\tiny
     $\left(
      \begin{array}{cc} a & b \\ c& d  \end{array}
     \right)$} $\in \SL(2,{\Bbb Z})$,
 with the flat metric specified by the K\"{a}hler form
 $\omega=dz\wedge d\overline{z}$.
With this metric,
a covariantly constant holomorphic $1$-form on $C$
  whose real part defines a calibration on $(C,\omega)$
 such that there exists a calibrated cycle on $(C,\omega)$
 is given by $\Omega_{\theta}=e^{i\theta}dz$
  for $\theta\in A_{\tau} \subset S^1$,
  parameterized by
  $\{\Arg(z): z\in {\Bbb Z}+{\Bbb Z}\tau\}\subset [0,2\pi)$.
The volume-minimizing property of a calibrated cycle implies that
 such a cycle is lifted to a collection of parallel straight lines
 in ${\Bbb C}$
 (with the standard K\"{a}hler form also denoted by
  $dz\wedge d\overline{z}$).
In particular, all the calibrated $1$-cycles on
 $(C,\omega,\Omega_{\theta})$ are submanifolds,
 with their connected components differing
 by isometric translations on $(C,\omega)$.
{For} the current case,
any $1$-cycle on $(C,\omega)$ is a Lagrangian cycle
while:

\begin{convention}
{\bf [special Lagrangian cycle/morphism on/to $(C,\omega)$].} {\rm
 A $1$-cycle $L$ on $(C,\omega)$
   is called a {\it special Lagrangian cycle}
  if it is a calibrated $1$-cycle with respect to $\Omega_{\theta}$
   for some $\theta\in S^1$.
 Similarly, for the notion of
  a {\it special Lagrangian morphism} $\varphi$ to $(C,\omega)$.
}\end{convention}

Naively, one may wonder that
 D-branes of A-type in the current situation
 is too trivial to even be considered as an example.
However, it should be remembered that
 D-branes are not just the underlying supporting cycles.
Among other things,
 they are equipped with a Chan-Paton bundle/sheaf/module
 which even in this example can has less trivial structures.
{To} easily see such latter structure,
 we employ Aspect~IV of morphisms in Sec.~2.2 for the discussion.

\bigskip

\subsection{The stack ${\frak M}^{\,0^{Az^f}}_{\,r}\!\!(C)$.}

Some details of the stack ${\frak M}^{\,0^{Az^f}}_{\,r}\!\!(C)$
 are required to understand morphisms to $C$ from Aspect~IV.

\bigskip

\begin{flushleft}
{\bf The stack ${\frak M}^{\,0^{Az^f}}_{\,r}\!\!(C)$  and
     its representation-theoretical atlas
      $\Quot^{\,H^0}\!({\cal O}_C^{\oplus r},r)$.}\footnote{In
                    this theme, we adopt notations from
                     complex/symplectic geometry.
                    However, one can treat the whole theme
                     in the realm of algebraic geometry over ${\Bbb C}$
                     and then take its valid analytic counterpart.}
\end{flushleft}
Recall the representation-theoretical atlas
 $$
  \Quot^{\,H^0}\!({\cal O}_C^{\oplus r},r)\;
   :=\;
   \{\, {\cal O}_C^{\oplus r}
          \rightarrow \widetilde{\cal E}\rightarrow 0\,,\;
        \length\widetilde{\cal E}=r\,,\;
        H^0({\cal O}_C^{\oplus r})
          \rightarrow H^0(\widetilde{\cal E})\rightarrow 0\,
      \}
 $$
 of the stack ${\frak M}^{\,0^{Az^f}}_{\,r}\!\!(C)$ of morphisms
  from a (non-fixed) Azumaya point
   with a fundamental module $\simeq {\Bbb C}^r$
  to $C$.

\begin{lemma}
{\bf [$\Quot^{\,H^0}\!({\cal O}_C^{\oplus r},r)$ smooth].}
 $\Quot^{\,H^0}\!({\cal O}_C^{\oplus r},r)$ is smooth
 of (complex) dimension $r^2$ for $C$ a smooth complex curve.
\end{lemma}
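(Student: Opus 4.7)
The plan is to invoke standard deformation theory for the Quot scheme. At a point $[q:{\cal O}_C^{\oplus r}\twoheadrightarrow\widetilde{\cal E}]\in\Quot^{H^0}\!({\cal O}_C^{\oplus r},r)$ with kernel ${\cal K}:=\Ker(q)$, the Zariski tangent space is $\Hom_{{\cal O}_C}({\cal K},\widetilde{\cal E})$ and the obstruction to deforming $q$ lies in $\Ext^1_{{\cal O}_C}({\cal K},\widetilde{\cal E})$. The whole argument therefore reduces to (a) showing that this $\Ext^1$ vanishes, and (b) computing $\dim\Hom_{{\cal O}_C}({\cal K},\widetilde{\cal E})=r^2$.

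For (a), first note that ${\cal K}$ is a subsheaf of the locally free sheaf ${\cal O}_C^{\oplus r}$ on a \emph{smooth} curve, hence torsion-free, hence locally free; by counting ranks it has rank $r$. Now apply $\Hom_{{\cal O}_C}(\,\cdot\,,\widetilde{\cal E})$ to
\[
 0\,\longrightarrow\,{\cal K}\,\longrightarrow\,{\cal O}_C^{\oplus r}\,\longrightarrow\,\widetilde{\cal E}\,\longrightarrow\, 0\,.
\]
Because $\widetilde{\cal E}$ is $0$-dimensional, $\Ext^1_{{\cal O}_C}({\cal O}_C^{\oplus r},\widetilde{\cal E})=H^1(C,\widetilde{\cal E})^{\oplus r}=0$, so the connecting tail gives $\Ext^1_{{\cal O}_C}({\cal K},\widetilde{\cal E})=0$. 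This kills the obstruction and establishes that $\Quot^{H^0}\!({\cal O}_C^{\oplus r},r)$ is smooth at every ${\Bbb C}$-point (smoothness for the closed quot scheme $\Quot({\cal O}_C^{\oplus r},r)$; openness of the $H^0$-surjectivity condition then transfers smoothness to the open subscheme).

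For (b), since $\widetilde{\cal E}$ is a skyscraper of total length $r$ and ${\cal K}$ is locally free of rank $r$, at each point $p\in\Supp\widetilde{\cal E}$ one has $\Homsheaf_{{\cal O}_C}({\cal K},\widetilde{\cal E})_p\simeq \widetilde{\cal E}_p^{\oplus r}$; globalizing,
\[
 \dim_{\Bbb C}\Hom_{{\cal O}_C}({\cal K},\widetilde{\cal E})\;=\; r\cdot \length(\widetilde{\cal E})\;=\;r^2\,.
\]
(Equivalently, using $\Ext^1=0$ from step (a), $\dim\Hom({\cal K},\widetilde{\cal E})=\chi({\cal K},\widetilde{\cal E})=r\cdot r$ by the standard Euler-characteristic computation on a curve.)

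Combining (a) and (b), the Quot scheme is smooth of dimension $r^2$ at every point of $\Quot^{H^0}\!({\cal O}_C^{\oplus r},r)$, which is the claim. No step is really ``hard'' here; the only point that requires a touch of care is recognizing that smoothness on the ambient Quot scheme passes to the open locus, and that the local-freeness of ${\cal K}$ (crucial for both vanishing and the stalkwise $\Hom$-count) uses smoothness of $C$ in an essential way — this is precisely where the hypothesis that $C$ is a smooth curve enters.
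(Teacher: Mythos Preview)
Your argument is correct. The one phrase that could be tightened is ``the connecting tail gives $\Ext^1({\cal K},\widetilde{\cal E})=0$'': the cleanest justification is simply that ${\cal K}$ is locally free, so $\Ext^1_{{\cal O}_C}({\cal K},\widetilde{\cal E})\simeq H^1(C,{\cal K}^{\vee}\otimes\widetilde{\cal E})$, which vanishes because ${\cal K}^{\vee}\otimes\widetilde{\cal E}$ is zero-dimensional. (The long-exact-sequence version as you wrote it needs the extra remark that $\Ext^2(\widetilde{\cal E},\widetilde{\cal E})=0$ on a smooth curve to close off the tail.)

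The paper takes a genuinely different route. Rather than invoking deformation theory, it argues by explicit local charts: near a point of the Quot scheme one reduces (implicitly via the \'{e}tale-local product decomposition over the support of $\widetilde{\cal E}$) to the case where $\widetilde{\cal E}$ is supported at a single $p\in C$, then uses a branched cover $C\to{\Bbb C}{\rm P}^1$ sending $p\mapsto 0$ to reduce further to $C={\Bbb C}$, where $\Quot^{\,H^0}\!({\cal O}_{\Bbb C}^{\oplus r},r)$ is identified outright with $M_r({\Bbb C})\simeq{\Bbb C}^{r^2}$ via the representation-theoretic picture of Example~2.1.1. Your approach is cleaner and more intrinsic --- it works uniformly without any local reduction and makes the role of the hypothesis ``$C$ smooth of dimension one'' transparent through local-freeness of ${\cal K}$. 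The paper's approach is more hands-on: it actually exhibits local coordinate charts by matrices, and that explicit identification is reused immediately afterward to model the generalized Hilbert--Chow morphism $\rho$ on $m\mapsto\det(\lambda-m)$ and to describe the $\GL_r({\Bbb C})$-orbit stratification via Jordan forms.
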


\begin{proof}
 One only needs to prove the statement around a point
  $[{\cal O}^{\oplus r}\rightarrow \widetilde{\cal E}\rightarrow 0]
   \in \Quot^{\,H^0}\!({\cal O}_C^{\oplus r},r)$
  with $\widetilde{\cal E}$ supported at a point $p\in C$.
 In this case, since there exists a branched-covering map
  $C\rightarrow \CP^1\simeq {\Bbb C}\cup \{\infty\}$
  that takes $p$ to $0\in {\Bbb C}$,
 one reduces the problem further to the case $C={\Bbb C}$.
 Since
  $\Quot^{\,H^0}\!({\cal O}_{\Bbb C}^{\oplus r},r)\simeq {\Bbb C}^{r^2}$
  (cf.~Example 2.1.1),
     % Example [D0-branes on ${\Bbb A}^1$]
 the lemma follows.

\end{proof}

\begin{lemma}
{\bf [${\frak M}^{\,0^{Az^f}}_{\,r}\!\!(C)$ smooth].}
 ${\frak M}^{\,0^{Az^f}}_{\,r}\!\!(C)$
  is smooth of uniform stacky-dimension $0$.
\end{lemma}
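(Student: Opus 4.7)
The plan is to exhibit ${\frak M}^{\,0^{Az^f}}_{\,r}\!\!(C)$ as a quotient stack of the representation-theoretical atlas by $\GL_r({\Bbb C})$ and then read off both smoothness and dimension from the atlas presentation.

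First, I would invoke the Aspect~IV description from Sec.~2.2: for any scheme $T$, a $T$-family of morphisms in ${\frak M}^{\,0^{Az^f}}_{\,r}\!\!(C)$ is equivalent to a principal $\GL_r({\Bbb C})$-bundle $P\to T$ together with a $\GL_r({\Bbb C})$-equivariant morphism $P\to \Quot^{\,H^0}\!({\cal O}_C^{\oplus r},r)$. This is precisely the functor of points of the quotient stack, so
$$
 {\frak M}^{\,0^{Az^f}}_{\,r}\!\!(C)\; =\;
  \bigl[\,\Quot^{\,H^0}\!({\cal O}_C^{\oplus r},r)\,/\,\GL_r({\Bbb C})\,\bigr]\,.
$$
The built-in projection
$\pi:\Quot^{\,H^0}\!({\cal O}_C^{\oplus r},r) \rightarrow {\frak M}^{\,0^{Az^f}}_{\,r}\!\!(C)$
is then a smooth atlas.

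Next, I would combine this with Lemma~4.1.1: the atlas $\Quot^{\,H^0}\!({\cal O}_C^{\oplus r},r)$ is smooth of pure (complex) dimension $r^2$. Since smoothness of an Artin stack is local on a smooth atlas, this immediately yields the smoothness of ${\frak M}^{\,0^{Az^f}}_{\,r}\!\!(C)$.

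Finally, the dimension is computed by the standard formula for quotient stacks, $\dim\,[X/G]=\dim X-\dim G$, applied here with $X=\Quot^{\,H^0}\!({\cal O}_C^{\oplus r},r)$ of dimension $r^2$ and $G=\GL_r({\Bbb C})$, also of dimension $r^2$. This gives uniform stacky-dimension $r^2-r^2=0$, as claimed. The only point requiring care is verifying that Aspect~IV really does identify ${\frak M}^{\,0^{Az^f}}_{\,r}\!\!(C)$ with the quotient stack (as opposed to merely providing a smooth surjection); this is the main conceptual step, but it is essentially a restatement of the Isom-functor diagram already displayed in Sec.~2.2 together with faithfully flat descent for $0$-dimensional sheaves on $C$.
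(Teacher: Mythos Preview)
Your proposal is correct and follows essentially the same route as the paper: both deduce smoothness from Lemma~4.1.1 via the atlas $\Quot^{\,H^0}\!({\cal O}_C^{\oplus r},r)$ and compute the stacky-dimension as $r^2-r^2=0$. The only cosmetic difference is that you explicitly identify the stack as the quotient $[\,\Quot^{\,H^0}\!({\cal O}_C^{\oplus r},r)\,/\,\GL_r({\Bbb C})\,]$ and invoke the formula $\dim[X/G]=\dim X-\dim G$, whereas the paper phrases the same computation in terms of the relative dimension of $\pr_1$ on the fibered product $\Quot^{\,H^0}\times_{{\frak M}}\Quot^{\,H^0}$; since that fibered product is precisely the action groupoid, the two formulations are interchangeable.
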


\begin{proof}
 The smoothness of ${\frak M}^{\,0^{Az^f}}_{\,r}\!\!(C)$
  follows from Lemma~4.1.1 that it has a smooth atlas, and [Schl].
             % Lemma [$\Quot^{\,H^0}\!({\cal O}_C^{\oplus r},r)$ smooth]
 For the stacky-dimension, note that
  the first projection map of the fibered-product
  $$
   \Quot^{\,H^0}\!({\cal O}_C^{\oplus r},r)
    \times_{{\frak M}^{\,0^{Az^f}}_{\,r}\!\!(C)}
   \Quot^{\,H^0}\!({\cal O}_C^{\oplus r},r)\;
   \stackrel{pr_1}{\longrightarrow}\;
   \Quot^{\,H^0}\!({\cal O}_C^{\oplus r},r)
  $$
  has uniform relative-dimension $r^2$.
 Since $\Quot^{\,H^0}\!({\cal O}_C^{\oplus r},r)$
  is smooth of dimension $r^2$,
  the stacky-dimension of ${\frak M}^{\,0^{Az^f}}_{\,r}\!\!(C)$
   is also uniform, whose value is given by
   $$
    \dimm {\frak M}^{\,0^{Az^f}}_{\,r}\!\!(C)\;
    =\;  \dimm \Quot^{\,H^0}\!({\cal O}_C^{\oplus r},r)\,
          -\, \reldimm\,\pr_1\;
    =\; 0\,.
   $$

\end{proof}

It follows from the proof of Lemma~4.1.1
         % Lemma [$\Quot^{\,H^0}\!({\cal O}_C^{\oplus r},r)$ smooth]
 that the generalized Hilbert-Chow morphism
 $$
  \begin{array}{cccccl}
  \rho  & : & \Quot^{\,H^0}\!({\cal O}_C^{\oplus r},r)
        & \longrightarrow  &  C^{(r)} \\[.6ex]
       && [{\cal O}_C^{\oplus r}\rightarrow \tilde{\cal E}\rightarrow 0]
        & \longmapsto
        & \sum_{p\in C}\,\length(\widetilde{\cal E}_p)\,[p]
        &,
  \end{array}
 $$
 where
  $C^{(r)}$ is the $r$-th symmetric product of $C$  and
  $\widetilde{\cal E}_p$ is the stalk of $\widetilde{\cal E}$ at $p$,
 is locally modelled on the morphism
 $$
  \begin{array}{cccccl}
  \rho^{\prime}
    & : & {\Bbb C}^{r^2} & \longrightarrow  & {\Bbb C}^r       \\[.6ex]
       && m              & \longmapsto      & \det(\lambda-m)  &,
  \end{array}
 $$
 where
  ${\Bbb C}^{r^2}$ is the space of $r\times r$-matrices over ${\Bbb C}$
   and
  ${\Bbb C}^r$ is the space of monic polynomials of degree $r$
    in $\lambda$.
Let $\GL_r({\Bbb C})$ acts on $C^{(r)}$ on the right
 trivially by the identity map,
then $\rho$ is $\GL_r({\Bbb C})$-equivariant.
Thus, the decomposition of $C^{(r)}$ by the strata of subdiagonals
 lifts to a $\GL_r({\Bbb C})$-invariant stratification of
 $\Quot^{\,H^0}\!({\cal O}_C^{\oplus r},r)$.

{For} $r\ge 2$, the fibration $\rho$ has no sections.
However, through the canonical isomorphism of $C^{(r)}$ with
 the Hilbert-scheme $C^{[r]}$ of $0$-dimensional subschemes of $C$ of
 length $r$ and the universal subscheme on $(C^{[r]}\times C)/C^{[r]}$,
one obtains a map
 $$
  \sigma_{[r]}\,=:\,\sigma_{(r)}\; :\; C^{[r]}\,=\,C^{(r)}\;
   \longrightarrow\;  {\frak M}^{\,0^{Az^f}}_{\,r}\!\!(C)
 $$
 and, hence,
 a $\GL_r({\Bbb C})$-equivariant morphism over $C^{[r]}=C^{(r)}$
 via the Isom-functor construction:
 $$
  \widetilde{\sigma}_{[r]}\,=:\,\widetilde{\sigma}_{(r)}\; :\;
   P_{C^{[r]}}\,=:\,  P_{C^{(r)}}\;
     \longrightarrow\;  \Quot^{\,H^0}\!({\cal O}_C^{\oplus r},r)\,.
 $$
Here, $P_{C^{[r]}}$ (resp.\ $P_{C^{(r)}}$) is a principal
 $\GL_r({\Bbb C})$-bundle over $C^{[r]}$ (resp.\ $C^{(r)}$).

Another natural map into $\Quot^{\,H^0}\!({\cal O}_C^{\oplus r},r)$
 that is related to $C^{(r)}$ more directly
 can be constructed as follows.
Regard $C$ as the moduli space of $0$-dimensional subschemes
 of itself of length $1$ and
consider the universal subscheme
 ${\cal O}_{C\times C}\rightarrow {\cal O}_{\Delta_C} \rightarrow 0$
 on $(C\times C)/C$.
Here,
 all the products are over ${\Bbb C}$,
 $\Delta_C$ is the diagonal of $C\times C$, and
 $(C\times C)/C$ corresponds to the first projection map
  $\pr_1:C\times C\rightarrow C$.
Let $\pr_i:C^{\times r}\rightarrow C$, $1\le i\le r$,
 be the projection map to the $i$-th component.
Then, the direct sum of the exact complexes
 $$
  \oplus_{i=1}^r\,
   (\pr_i^{\ast}{\cal O}_{C\times C}\;\longrightarrow\;
      \pr_i^{\ast}{\cal O}_{\Delta_C}\; \longrightarrow\; 0)
 $$
 on $(C^{\times r}\times C)/C^{\times r}$
 defines a quotient
 $$
  {\cal O}_{C^{\times r}\times C}^{\oplus r}\;
   \longrightarrow\; \oplus_{i=1}^r\,\pr_i^{\ast}{\cal O}_{\Delta_C}\;
   \longrightarrow\; 0
 $$
 on $(C^{\times r}\times C)/C^{\times r}$.
This realizes $C^{\times r}$ as the moduli space of
  the quotients
  $\oplus_{i=1}^r\,({\cal O}_C\rightarrow {\cal O}_{p_i}\rightarrow 0)$,
  where $p_i\in C$,   and
it defines a map over $C^{(r)}\,$:
 $$
  \hat{\sigma}\; :\;  C^{\times r}\; \longrightarrow\;
   \Quot^{\,H^0}\!({\cal O}_C^{\oplus r},r)\,.
 $$
One can view $\hat{\sigma}$ as a {\it canonical multi-section} of
 $\rho : \Quot^{\,H^0}\!({\cal O}_C^{\oplus r},r) \rightarrow C^{(r)}$.
As such, the set-value $\hat{\sigma}^{-1}(\mbox{\boldmath $p$})$
 of $\hat{\sigma}$ at a point
 $\mbox{\boldmath $p$}=[(p_1,\,\cdots\,,p_r)]\in C^{(r)}$
 lies in a single $\GL_r({\Bbb C})$-orbit.

\bigskip

\begin{flushleft}
{\bf Orbit-closure inclusion relations.}
\end{flushleft}
The orbit-closure inclusion relations of the $\GL_r({\Bbb C})$-orbits
 in $\Quot^{\,H^0}\!({\cal O}_C^{\oplus r},r)$
 can be characterized in terms of Jordan forms, as follows.

\begin{definition} {\bf [support-length data].} {\rm
 Let
  $[{\cal O}_C^{\oplus r}\rightarrow \widetilde{\cal E}\rightarrow 0]
    \in  \Quot^{\,H^0}\!({\cal O}_C^{\oplus r},r)$,
 define the {\it support-length data} of
  $[{\cal O}_C^{\oplus r}\rightarrow \widetilde{\cal E}\rightarrow 0]$
  to be
  $\mbox{\boldmath $l$}
   :=\{(p, r_p): p\in C\,,\, r_p=\length({\widetilde{\cal E}_p})>0\}$.
 This is a finite set with $\sum_p\,r_p =r$  and
  is invariant under the $\GL_r({\Bbb C})$-action
  on $\Quot^{\,H^0}\!({\cal O}_C^{\oplus r},r)$.
}\end{definition}

\noindent
{For} two orbits $O_1$ and $O_2$
 in $\Quot^{\,H^0}\!({\cal O}_C^{\oplus r},r)$
 to have non-empty
 $\overline{O_1}\cap O_2$ or $O_1\cap \overline{O_2}$,
it is necessary that $O_1$ and $O_2$ are mapped under $\rho$
 to the same point on $C^{(r)}$.
The latter condition holds if and only if
 their associated support-length data are identical.
Observe also that if $O_1\cap \overline{O_2}\ne \emptyset$,
 then $O_1\subset \overline{O_2}$.

\begin{definition}
{\bf [tamed representative for orbit].} {\rm
 Let $O$ be an orbit with support-length data
  $\mbox{\boldmath $l$}=\{(p_i,r_{p_i}):\, i=1,\,\ldots\,,k\}$.
 Then
  $[{\cal O}_C^{\oplus r}\rightarrow \widetilde{\cal E}\rightarrow 0]
   \in O$
  is called a {\it tamed representative} of $O$
 if  it is a direct sum
  $\oplus_{i=1}^k\, ({\cal O}_C^{\oplus r_{p_i}}
                 \rightarrow \widetilde{\cal E}_i\rightarrow 0)$,
  where $(\Supp(\widetilde{\cal E}_i))_{\redscriptsize}=p_i$.
}\end{definition}

\noindent
Note that for an orbit $O$ with support-length data
 $\mbox{\boldmath $l$}=\{(p_i,r_{p_i}):\, i=1,\,\ldots\,,k\}$,
$\prod_{i=1}^k\,\GL_{r_{p_i}}({\Bbb C})$ acts transitively
 on the set of tamed representatives of $O$.

\begin{lemma} {\bf [big orbit relation from small orbit relation].}
 Let
  $O_1$ and $O_2$ be two $\GL_r({\Bbb C})$-orbits
   with identical support-length data
   $\mbox{\boldmath $l$}=\{(p_i,r_{p_i})\}_i$  and
  ${\cal O}_C^{\oplus r}
          \rightarrow \widetilde{\cal E}_1\rightarrow 0$,
   ${\cal O}_C^{\oplus r}
          \rightarrow \widetilde{\cal E}_2\rightarrow 0$
   are tamed representatives for $O_1$ and $O_2$ respectively.
 Then $O_1\subset \overline{O_2}$
 if and only if
  the corresponding $\prod_i \GL_{r_{p_i}}({\Bbb C})$-orbits satisfy
  the same relation
   $$
    (\mbox{$\prod$}_i \GL_{r_{p_i}}({\Bbb C}))
     \cdot [{\cal O}_C^{\oplus r}
               \rightarrow \widetilde{\cal E}_1\rightarrow 0]\,
    \subset\,
    \overline{
     (\mbox{$\prod$}_i \GL_{r_{p_i}}({\Bbb C}))
      \cdot [{\cal O}_C^{\oplus r}
              \rightarrow \widetilde{\cal E}_2\rightarrow 0]}\,.
   $$
\end{lemma}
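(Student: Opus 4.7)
The plan is to reduce the lemma to a matrix calculation on the representation-theoretical atlas of Sec.~2.2 and then exploit the Jordan-decomposition-induced homogeneous-bundle structure of the stratum $S_{\mbox{\boldmath $l$}}$.

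First I would localize: pick an affine open $U=\Spec R\subset C$ containing all of $\{p_i\}$, so that the entire stratum $S_{\mbox{\boldmath $l$}}$ lies in $\Quot^{H^0}({\cal O}_U^{\oplus r},r)$, which by the atlas identification equals $\Rep(R,M_r({\Bbb C}))$. Since $C$ is a smooth curve, choosing a local coordinate $z\in R$ expresses a representation in a neighborhood of any point of $S_{\mbox{\boldmath $l$}}$ by the single matrix $m=\rho(z)\in M_r({\Bbb C})$, and the condition that $\rho$ produce support-length data $\mbox{\boldmath $l$}$ becomes the requirement that the characteristic polynomial of $m$ be $\chi(z):=\prod_i(z-p_i)^{r_{p_i}}$. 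The right $\GL_r({\Bbb C})$-action translates into conjugation. Call this matrix stratum $\Sigma_\chi\subset M_r({\Bbb C})$. Because orbit closures in $\Quot^{H^0}$ respect the stratification by support-length data and $O_1,O_2\subset S_{\mbox{\boldmath $l$}}$, it suffices to analyze closures inside $\Sigma_\chi$.

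Next I would set up the homogeneous-bundle structure. Every $m\in\Sigma_\chi$ canonically stabilizes its generalized-eigenspace decomposition ${\Bbb C}^r=\bigoplus_i\ker(m-p_i)^{r_{p_i}}$, the $i$-th summand having dimension $r_{p_i}$. Let $\Sigma_\chi^\Delta\subset\Sigma_\chi$ be the closed subvariety of matrices that stabilize the \emph{standard} decomposition ${\Bbb C}^r=\bigoplus_i{\Bbb C}^{r_{p_i}}$; equivalently $\Sigma_\chi^\Delta\cong\prod_i\bigl(p_iI_{r_{p_i}}+{\cal N}_{r_{p_i}}\bigr)$, where ${\cal N}_n\subset M_n({\Bbb C})$ denotes the nilpotent cone. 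Under the atlas dictionary, the tamed representatives of orbits with support-length data $\mbox{\boldmath $l$}$ correspond exactly to matrices in $\Sigma_\chi^\Delta$, and the block-diagonal embedding $P:=\prod_i\GL_{r_{p_i}}({\Bbb C})\hookrightarrow\GL_r({\Bbb C})$ acts on $\Sigma_\chi^\Delta$ by block-wise conjugation, its orbits factoring as products of $\GL_{r_{p_i}}$-orbits on the factors $p_iI+{\cal N}_{r_{p_i}}$. Jordan normal form now gives that the conjugation map
\begin{equation*}
 \mu\;:\;\GL_r\times\Sigma_\chi^\Delta\;\longrightarrow\;\Sigma_\chi,
  \hspace{2em}(g,m)\;\longmapsto\;gmg^{-1}
\end{equation*}
is surjective and $P$-equivariant for $p\cdot(g,m)=(gp^{-1},pmp^{-1})$, and moreover the $\GL_r$-stabilizer of any $m\in\Sigma_\chi^\Delta$ lies in $P$ (anything commuting with $m$ preserves its canonical decomposition). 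Hence $\mu$ descends to an isomorphism of $\GL_r$-varieties $\overline{\mu}:\GL_r\times^P\Sigma_\chi^\Delta\stackrel{\sim}{\to}\Sigma_\chi$.

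Finally I would derive the orbit-closure dictionary. For an induced homogeneous bundle $\GL_r\times^P Z$, the standard correspondence $Z'\leftrightarrow\GL_r\cdot Z'$ yields a bijection between closed $P$-invariant subsets of $Z$ and closed $\GL_r$-invariant subsets of $\GL_r\times^P Z$, with inverse $W\mapsto W\cap(\text{fiber over }eP)$. Applied here this gives, for every $\GL_r$-orbit $O\subset\Sigma_\chi$, that $O\cap\Sigma_\chi^\Delta$ is a single $P$-orbit and that $\overline{O}\cap\Sigma_\chi^\Delta=\overline{O\cap\Sigma_\chi^\Delta}$ (closure in $\Sigma_\chi^\Delta$). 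Specializing to $O_1,O_2$, the relation $O_1\subset\overline{O_2}$ is equivalent to $O_1\cap\Sigma_\chi^\Delta\subset\overline{O_2\cap\Sigma_\chi^\Delta}$, i.e., to the $P$-orbit closure containment between the tamed representatives. Because $P=\prod_i\GL_{r_{p_i}}$ acts componentwise on the product $\Sigma_\chi^\Delta=\prod_i(p_iI+{\cal N}_{r_{p_i}})$, every $P$-orbit is a product of $\GL_{r_{p_i}}$-orbits and its closure is the product of the constituent closures; translating back through the atlas yields the stated equivalence. The main obstacle will be the orbit-closure dictionary, specifically verifying that $\overline{O}\cap\Sigma_\chi^\Delta=\overline{O\cap\Sigma_\chi^\Delta}$ — this follows from $\overline{\mu}$ being an isomorphism and the principal $P$-bundle $\GL_r\to\GL_r/P$ being faithfully flat, but the verification must be carried out carefully; the remaining passages are routine unwinding of the Sec.~2.2 atlas.
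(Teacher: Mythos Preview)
Your proof is correct and takes a genuinely different route from the paper's. The paper argues the nontrivial (`only-if') direction analytically: given $O_1\subset\overline{O_2}$, it picks a path $\gamma:[0,\infty)\to\GL_r({\Bbb C})$ carrying the tamed representative of $O_2$ to that of $O_1$ in the limit, and then asserts the existence of a block-diagonal path $\gamma':[0,\infty)\to\prod_i\GL_{r_{p_i}}({\Bbb C})$ that is \emph{asymptotically equal} to $\gamma$ in $M_r({\Bbb C})$, whence $\gamma'$ witnesses the small-orbit closure relation. Your argument instead exhibits the stratum $\Sigma_\chi$ as the induced variety $\GL_r\times^P\Sigma_\chi^\Delta$ via the generalized-eigenspace decomposition (the projectors being polynomial in $m$ makes the section $\Sigma_\chi\to\GL_r/P$ a morphism), and then invokes the standard bijection between closed $P$-invariant subsets of the fiber and closed $\GL_r$-invariant subsets of the total space.

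What each approach buys: the paper's path argument is shorter and avoids setting up the associated-bundle machinery, but its crucial step --- that an arbitrary $\GL_r$-path with block-diagonal limit can be asymptotically replaced by a $P$-path --- is stated without justification and is not obvious (indeed $\gamma(t)$ can wander off to infinity in off-diagonal directions). Your structural argument is longer to set up but is transparently rigorous, and it is the standard mechanism whenever one has a Levi-type subgroup $P\subset G$ acting on a slice: it would apply verbatim to compare $G$-orbit closures with $P$-orbit closures in any situation where the ambient stratum is an associated bundle over $G/P$. The one place to tighten your write-up is the localization step: rather than ``choosing a local coordinate $z\in R$,'' you should invoke directly the paper's local model (proof of Lemma~4.1.1) identifying a neighborhood of $\rho^{-1}(\mbox{\boldmath $p$})$ with matrices in $M_r({\Bbb C})$ via $m\mapsto\det(\lambda-m)$, which is exactly what you need and is already established.
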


\begin{proof}
 We need to show the `only-if' part.
 Assume that $O_1\subset \overline{O_2}$,
 then
  there exists a path $\gamma:[0,\infty)\rightarrow \GL_r({\Bbb C})$
   with $\gamma(0)=\Id$
  such that
  $\lim_{t\rightarrow \infty}\gamma(t)
    \cdot
     [{\cal O}_C^{\oplus r}
        \rightarrow \widetilde{\cal E}_2\rightarrow 0]
   = [{\cal O}_C^{\oplus r}
        \rightarrow \widetilde{\cal E}_1\rightarrow 0]$.
 Since
  $H^0({\cal O}_C^{\oplus r})
   \rightarrow H^0(\widetilde{\cal E}_1)\rightarrow 0$,
  $H^0({\cal O}_C^{\oplus r})
   \rightarrow H^0(\widetilde{\cal E}_2)\rightarrow 0$,  and
  both representatives are tamed representatives for orbits
   with the same support-length data,
 through the common decomposition
  $H^0({\cal O}_C^{\oplus r})={\Bbb C}^r=\oplus_i{\Bbb C}^{r_{p_i}}$,
 there exists a path
  $\gamma^{\prime}:[0,\infty)\rightarrow \prod_i\GL_{r_{p_i}}({\Bbb C})$
  with $\gamma(0)=\Id$ such that
 $\gamma$ and $\gamma^{\prime}$ are asymptotically the same
  in the sense that
   $\lim_{t\rightarrow \infty}(\gamma(t)-\gamma^{\prime}(t))=0$
   in the matrix-representation
   $\GL_r({\Bbb C})\hookrightarrow M_r({\Bbb C})$
   of $\GL_r({\Bbb C})$ with respect to
   the above decomposition of $H^0({\cal O}_C^{\oplus r})$.
 This implies that
  $\lim_{t\rightarrow \infty}\gamma^{\prime}(t)
    \cdot
     [{\cal O}_C^{\oplus r}
        \rightarrow \widetilde{\cal E}_2\rightarrow 0]
   = [{\cal O}_C^{\oplus r}
        \rightarrow \widetilde{\cal E}_1\rightarrow 0]$.
 The lemma follows.

\end{proof}

Let $J^{(\lambda)}_j\in M_j({\Bbb C})$ be the matrix
{\scriptsize
 $$
  \left[
   \begin{array}{cccc}
    \lambda   &          &        & 0       \\
    1         & \lambda  &        &         \\
              &  \ddots  & \ddots &         \\
    0         &          &   1    & \lambda
   \end{array}
  \right]_{j\times j}
 $$
{\normalsize A}}
Jordan form $J$ in $M_n({\Bbb C})$ is a matrix of the following form
$$
 \left[
  \begin{array}{ccc}
   A_1 &         & 0 \\
       & \ddots  &   \\
   0   &         & A_k
  \end{array}
 \right]
  \hspace{1em}
   \mbox{with each $A_i\in M_{n_i}({\Bbb C})$ of the form}\hspace{1em}
 \left[
  \begin{array}{ccc}
   J^{(\lambda_i)}_{d_{i1}} &         & \\
             & \ddots  &              \\
             &         &  J^{(\lambda_i)}_{d_{ik_i}}
  \end{array}
 \right]\,.
$$
Here, omitted entries are all zero,
 $n_1\ge \,\cdots\,\ge n_k> 0$, and $d_{i1}\ge\,\cdots\,\ge d_{ik_i}>0$.

Let
 $O$ be a $\GL_r({\Bbb C})$-orbit
  in $\Quot^{\,H^0}\!({\cal O}_C^{\oplus r},r)$ of support-length dada
  $\mbox{\boldmath $l$}=\{(p_i,r_{p_i}):\, i=1,\,\ldots\,,k\}$  and
 $[{\cal O}_C^{\oplus r}\rightarrow \widetilde{\cal E}\rightarrow 0]$
  be a tamed representative of $O$.
Then it follows from
  the proof of Lemma~4.1.1,
          % Lemma [$\Quot^{\,H^0}\!({\cal O}_C^{\oplus r},r)$ smooth]
  Example~2.1.1, and
          % Example [D0-branes on ${\Bbb A}^1$]
  a shifting of $z$ by $z-c$ for an appropriate $c\in {\Bbb C}$
   in that Example
 that each stalk $\widetilde{\cal E}_{p_i}$ of $\widetilde{\cal E}$
  can be represented by a matrix $m_i\in M_{r_{p_i}}({\Bbb C})$
  with characteristic polynomial $\det(\lambda-m_i)$
  equal to $\lambda^{r_{p_i}}$.

\begin{definition}
{\bf [Jordan-form data].} {\rm
 With the above notation, the tuple
  $\mbox{\boldmath $J$}_O := (J_{p_i})_{i=1}^k$,
  where $J_{p_i}$ is the Jordan form of $m_i$,
  is called the {\it Jordan-form data} associated to $O$.
}\end{definition}

\begin{definition}
{\bf [partial order on the set of Jordan-form data].} {\rm
 Given two Jordan-form data
  $\mbox{\boldmath $J$}_1=(J_{p_i})_{i=1}^{k_1}$ and
  $\mbox{\boldmath $J$}_2=(J_{q_j})_{j=1}^{k_2}$
  from the above construction,
 we say that
  $\mbox{\boldmath $J$}_1 \prec \mbox{\boldmath $J$}_2$
  if the following two conditions are satisfied:
  \begin{itemize}
   \item[(1)]
    The underlying support-length data are identical:
     $\mbox{\boldmath $l$}_1=\mbox{\boldmath $l$}_2$.
    I.e., $k_1=k_2$ and,
     up to a relabelling, $p_i=q_i$ with $r_{p_i}=r_{q_i}$.

   \item[$(2)$]
    $\rank((J_{p_i})^j) \le \rank (J_{q_i})^j)$ for all $j\in {\Bbb N}$.
  \end{itemize}
}\end{definition}

The above discussion reduces the problem of
 a characterization of the orbit-closure inclusion relations
 to the case of [Mo-T], [Ge], and [Dj]:

\begin{proposition}
{\bf [orbit-closure inclusion relation].} {\rm ([Mo-T], [Ge], and [Dj].)}
 Let $O_1$ and $O_2$ be two $\GL_r({\Bbb C})$-orbits
  in $\Quot^{\,H^0}\!({\cal O}_C^{\oplus r},r)$.
 Then,
  $O_1\subset \overline{O_2}$ if and only of
  $\mbox{\boldmath $J$}_{O_1} \prec \mbox{\boldmath $J$}_{O_2}$.
\end{proposition}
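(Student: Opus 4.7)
The plan is to reduce Proposition~4.1.7 to the classical Gerstenhaber-Hesselink theorem on closures of nilpotent conjugacy classes in $M_n(\mathbb{C})$, and then to invoke the cited references for the final step.

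First, I would use Lemma~4.1.5 to reduce to the case where $O_1$ and $O_2$ share the same support-length data $\boldsymbol{l} = \{(p_i, r_{p_i}) : i = 1,\ldots,k\}$; for if they do not, then they project to distinct points of $C^{(r)}$ under the generalized Hilbert--Chow morphism $\rho$, and the closure-inclusion fails on both sides trivially (since both sides imply equality of the underlying support-length data). Pick tamed representatives for both orbits. Since a tamed representative decomposes as a direct sum $\oplus_{i=1}^k\,({\cal O}_C^{\oplus r_{p_i}}\rightarrow\widetilde{\cal E}_i\rightarrow 0)$ supported at the single points $p_i$, and the relevant action is by $\prod_i \GL_{r_{p_i}}(\mathbb{C})$ acting componentwise, the closure problem splits into a product of single-point problems. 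Thus it suffices to prove the proposition when the support-length data is $\{(p, r_p)\}$ concentrated at a single point.

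Second, for the single-point case, I would choose a local analytic coordinate $z$ near $p$ so that $p$ corresponds to $0 \in \mathbb{C}$, reducing (via the proof of Lemma~4.1.1 and Example~2.1.1) the orbit in $\Quot^{\,H^0}\!({\cal O}_C^{\oplus r_p}, r_p)$ to the conjugacy class of a matrix $m \in M_{r_p}(\mathbb{C})$ under $\GL_{r_p}(\mathbb{C})$-conjugation, where the quotient $\widetilde{\cal E} \cong \mathbb{C}[z]^{\oplus r_p}/(z - m)\mathbb{C}[z]^{\oplus r_p}$. Since the support is $\{p\}$, the characteristic polynomial of $m$ is $\lambda^{r_p}$, so $m$ is nilpotent. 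The Jordan-form datum $J_{p}$ in Definition~4.1.6 is precisely the Jordan canonical form of $m$, and the partial order in Definition~4.1.7 restricted to this case becomes the standard dominance order on partitions via the rank conditions $\operatorname{rank}(J_1^j) \le \operatorname{rank}(J_2^j)$.

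Third, the problem is now reduced to the classical question: given two nilpotent matrices $N_1, N_2 \in M_n(\mathbb{C})$, when is the conjugacy class $O(N_1)$ contained in the closure $\overline{O(N_2)}$? This is answered precisely by Gerstenhaber's theorem [Ge] (with alternative proofs by Djokovi\'{c} [Dj] and Moody--Teranishi [Mo-T]): the inclusion holds if and only if $\operatorname{rank}(N_1^j) \le \operatorname{rank}(N_2^j)$ for all $j \in \mathbb{N}$, equivalently the partition of $N_1$ is dominated by the partition of $N_2$. Piecing the single-point cases back together via Lemma~4.1.5 yields the general statement.

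The main obstacle is the second step, namely ensuring that the Zariski-analytic-topology closure of a $\GL_r(\mathbb{C})$-orbit in $\Quot^{\,H^0}\!({\cal O}_C^{\oplus r}, r)$ restricted to a fiber of $\rho$ genuinely matches the matrix-conjugacy-class closure in $M_{r_p}(\mathbb{C})$ (including correctly identifying tamed representatives with the block-diagonal nilpotent model). Once this identification is made clean, the application of the cited classical theorem is immediate.
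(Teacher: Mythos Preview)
Your approach is essentially identical to the paper's: the paper offers no separate proof of this proposition but treats the preceding discussion (Lemma~4.1.5 together with the local identification of a fiber of $\rho$ over a single-point support with nilpotent conjugacy classes in $M_{r_p}(\mathbb{C})$, via Lemma~4.1.1 and Example~2.1.1) as the reduction, and then cites [Mo-T], [Ge], [Dj] for the classical result on closures of nilpotent conjugacy classes. One small correction: the reference [Mo-T] is Motzkin--Taussky, not Moody--Teranishi.
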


\noindent
In particular,
over each $\mbox{\boldmath $p$}=[(p_1,\,\cdots\,,p_r)]\in C^{(r)}$,
there are
 a unique maximal orbit, given by the image
  $\widetilde{\sigma}_{(r)}(P_{C^{(r)}})$ over {\boldmath $p$},  and
 a unique minimal orbit, given by the orbit
  that contains $\hat{\sigma}^{-1}(\mbox{\boldmath $p$})$.

\bigskip

\subsection{Special Lagrangian cycles with a bundle/sheaf
    on a Calabi-Yau torus\\ \`{a} la Polchinski-Grothendieck Ansatz.}

With the description of
 the stack ${\frak M}^{\,0^{Az^f}}_{\,r}\!\!(C)$,
 its representation-theoretical atlas
  $\Quot^{\,H^0}\!({\cal O}_C^{\oplus r},r)$, and
 the orbit-closure inclusion relations of the $\GL_r({\Bbb C})$-orbits
  therein in Sec.~4.1,
we are now ready to see,
 by studying Lagrangian/special Lagrangian morphisms to $(C,\omega)$
  from Aspect~IV in Sec.~2.2,
 that, indeed,
 even D-branes of A-type on a flat torus can have
 less visible structures.

\bigskip

\begin{flushleft}
{\bf Lagrangian/special Lagrangian morphisms to $(C,\omega)$
     as $\GL_r({\Bbb C})$-equivariant maps.}
\end{flushleft}
Any principal $\GL_r({\Bbb C})$-bundle $P$ over $S^1$ is trivial.
Fix a trivialization $P\simeq S^1\times \GL_r({\Bbb C})$
 with the identity section $S^1\rightarrow S^1\times \{e\}$.
Here, $e$ denotes the identity of $\GL_r({\Bbb C})$.
Whenever needed,
 we will identify the base $S^1$ with $S^1\times\{e\}\subset P$.
It follows that
 a morphism $\varphi: (S^{1,A\!z}, {\cal E})\rightarrow C$
 is specified by a smooth map
 $f: S^1 \rightarrow \Quot^{\,H^0}\!({\cal O}_C^{\oplus r},r)$.
Denote the universal quotient sheaf on
 $(\Quot^{\,H^0}\!({\cal O}_C^{\oplus r},r)\times C)/
                         \Quot^{\,H^0}\!({\cal O}_C^{\oplus r},r)$
 by
 ${\cal O}
      _{\scriptsizeQuot^{\,H^0}\!({\cal O}_C^{\oplus r},r)\times C}
      ^{\oplus r}
  \rightarrow {\cal Q}\rightarrow 0$.
Then, the $S^1$-family $f^{\ast}{\cal Q}$
 of $0$-dimensional ${\cal O}_C$-modules on $C$
 defines a map $S^1\rightarrow {\frak M}^{\,0^{Az^f}}_{\,r}\!\!(C)$,
 Aspect~III of $\varphi$.
Regarding the $S^1$-family as moving along $S^1$ in $(S^1\times C)/S^1$
 gives Aspect~II of $\varphi$.
The further projection/push-forward to $S^1$ and to $C$ recover $\varphi$.

{To} incorporate the flat geometry and the calibrations on $(C,\omega)$
 into the construction,
it is convenient to treat $f$ as a lifting of
 a $\underline{f}:S^1\rightarrow C^{(r)}$:
 $$
  \xymatrix{
    && \Quot^{\,H^0}\!({\cal O}_C^{\oplus r},r)\ar[d]^-{\rho} \\
   S^1\ar[rru]^-{f} \ar[rr]_-{\underline{f}}
    && C^{(r)} &.
  }
 $$
The following lemma is then immediate.

\begin{lemma}
{\bf [Lagrangian/special Lagrangian morphism].}
 Let
  $\pi:C^{\times r}\rightarrow C^{(r)}$ be the quotient map.
 Then,
  $f$ defines a Lagrangian morphism $\varphi$
  if and only if
  $\underline{f}$ is an immersion and
   the projection of the $1$-complex $\pi^{-1}(f(S^1))$
   to each factor of $C^{\times r}$ is also an immersion on the edges
   of the $1$-complex;
  $f$ defines a special Lagrangian morphism $\varphi$
  if and only if
  $\underline{f}$ is an immersion and
  there exists an $\Omega_{\theta}$ such that
   the projection of the edges of the $1$-complex $\pi^{-1}(f(S^1))$
   to each factor of $C^{\times r}$ are calibrated
   with respect to $\Omega_{\theta}$.
\end{lemma}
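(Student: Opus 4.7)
The plan is to unravel the Lagrangian/special-Lagrangian conditions from Definition~3.2.1 via the Aspect~II presentation of $\varphi$, and then translate the resulting conditions into the explicit combinatorial--differential geometry of the 1-complex $\pi^{-1}(\underline{f}(S^1))\subset C^{\times r}$.

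First I would fix notation by letting $\widetilde{\cal E}$ on $S^1\times C$ be the Aspect~II torsion sheaf presentation of $\varphi$ associated to $f$; concretely, it is the pullback of the universal quotient ${\cal Q}$ under $f\times\mathrm{id}_C$, so for each $s\in S^1$ the restriction $\widetilde{\cal E}|_{\{s\}\times C}$ is a $0$-dimensional ${\cal O}_C$-module with scheme-theoretic support the effective divisor $\underline{f}(s)$ of degree $r$. Denote $Z:=(\Supp\widetilde{\cal E})_{\redscriptsize}\subset S^1\times C$; this is a topological space of real dimension at most $1$. I would then observe that the Lagrangian $2$-form condition $\pr_2^{\,\ast}\omega|_Z=0$ is automatic here, because $Z$ is (at most) $1$-dimensional while $\omega$ is a $2$-form on the real surface $C$; hence the entire content of the Lagrangian condition reduces to the generic-immersion clause that $\pr_2|_Z$ is an immersion on a dense open subset of $Z$.

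The main analytic step is to re-express this generic-immersion clause in terms of $\underline{f}$ and $\pi^{-1}(\underline{f}(S^1))$. I would first argue necessity of $\underline{f}$ being an immersion: if $\underline{f}$ failed to be immersive on some open arc $I\subset S^1$, then either the unordered $r$-tuple $\underline{f}|_I$ would be locally constant on a subarc --- in which case $Z|_I$ is a disjoint union of horizontal arcs on which $\pr_2$ has rank $0$ --- or some component branch would have vanishing derivative. Conversely, when $\underline{f}$ is an immersion, a neighborhood of any smooth point $(s_0,p_0)\in Z$ is a disjoint union of smooth $1$-dimensional branches of the form $s\mapsto(s,p_i(s))$, where $(p_1(s),\ldots,p_r(s))$ is a local continuous lifting $S^1\supset U\to C^{\times r}$ of $\underline{f}|_U$. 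Such local liftings assemble globally (up to the $S_r$-action) precisely into the $1$-complex $\pi^{-1}(\underline{f}(S^1))\subset C^{\times r}$, whose edges are exactly these branches. Under this identification, $\pr_2|_Z$ on each branch is the composite $s\mapsto p_i(s)$, i.e., the projection to the $i$-th factor of $C^{\times r}$ restricted to an edge of the $1$-complex. Thus $\pr_2|_Z$ is generically an immersion if and only if each factor-projection is an immersion on every edge of $\pi^{-1}(\underline{f}(S^1))$, yielding the first equivalence.

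For the special-Lagrangian equivalence I would, building on the Lagrangian case, localize the calibration identity $\varphi^{\ast}\Real\Omega=\varphi^{\ast}\vol_1$ to each smooth edge. Since on a branch $s\mapsto(s,p_i(s))$ the pullback $\varphi^{\ast}\Real\Omega_{\theta}$ agrees with $p_i^{\,\ast}\Real\Omega_{\theta}$, and likewise for $\vol_1$ along the lifted metric, the calibration identity holds on that branch iff $p_i:S^1\to C$ is calibrated by $\Real\Omega_{\theta}$; a single $\theta$ must serve all branches since $\Omega_{\theta}$ is a globally chosen holomorphic form on $C$. This yields the stated condition. The main obstacle I anticipate is the careful handling of the vertices of $\pi^{-1}(\underline{f}(S^1))$ --- the points where branches collide or where a lifting fails to be smooth --- but since these form a set of real codimension at least $1$ in $Z$ (indeed a discrete set away from degeneracies of $\underline{f}$), they lie in the complement of the ``dense open subset'' required in Definition~3.2.1 and may be excluded without loss. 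A small secondary check is that the auxiliary trivialization $P\simeq S^1\times\GL_r({\Bbb C})$ does not affect the conditions, since the Lagrangian/special-Lagrangian property of $\varphi$ depends only on $Z$ and its image in $C$, both of which are $\GL_r({\Bbb C})$-invariant.
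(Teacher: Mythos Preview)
Your proposal is correct and matches the paper's approach: the paper offers no explicit proof, stating only that ``the following lemma is then immediate,'' and your argument is precisely the unraveling of Definition~3.2.1 via the Aspect~II presentation that the word ``immediate'' gestures at. One minor point: your necessity argument for $\underline{f}$ being an immersion passes from ``fails to be immersive at a point'' to ``fails on an open arc,'' which is not automatic; but since Definition~3.2.1 only requires generic immersion, and the lemma's ``$\underline{f}$ is an immersion'' should be read in that same generic sense (consistent with the paper's informal tone here), this does not affect the overall correctness.
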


The following lemma follows from
 either the canonical identification $C^{(r)}=C^{[r]}$
 or the fact that $\Quot^{\,H^0}\!({\cal O}_C^{\oplus r},r)$ is smooth
                  and all fibers of $\rho$ are path-connected:

\begin{lemma}
{\bf [$\underline{f}$ liftable].}
 Any $\underline{f}:S^1\rightarrow C^{(r)}$ lifts to
 an $f:S^1\rightarrow \Quot^{\,H^0}\!({\cal O}_C^{\oplus r},r)$
 such that $\underline{f}=\rho\circ f$.
\end{lemma}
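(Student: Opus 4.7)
The plan is to lift $\underline{f}$ via the $\GL_r({\Bbb C})$-equivariant morphism $\widetilde{\sigma}_{[r]}:P_{C^{[r]}}\to\Quot^{\,H^0}\!({\cal O}_C^{\oplus r},r)$ constructed in Sec.~4.1, exploiting the canonical identification $C^{(r)}\simeq C^{[r]}$. The key observation is that because $\widetilde{\sigma}_{[r]}$ sends a point of $P_{C^{[r]}}$, i.e.\ a pair consisting of a length-$r$ subscheme $Z\subset C$ together with a trivialization, to the quotient ${\cal O}_C^{\oplus r}\rightarrow {\cal O}_Z\rightarrow 0$ determined by this data, one has the commutative identity $\rho\circ \widetilde{\sigma}_{[r]}=\mbox{\it pr}_{C^{[r]}}:P_{C^{[r]}}\to C^{[r]}=C^{(r)}$. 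Thus any global (smooth) section of a pullback of the principal bundle $P_{C^{[r]}}\to C^{[r]}$ along $\underline{f}$ will furnish the desired lift.

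First, I would form the pullback $P':=\underline{f}^{\ast}P_{C^{[r]}}$, a principal $\GL_r({\Bbb C})$-bundle over $S^1$, together with the canonical map $\iota:P'\to P_{C^{[r]}}$ covering $\underline{f}$. Second, I would invoke the fact that $\GL_r({\Bbb C})$ is connected, so that $\pi_0(\GL_r({\Bbb C}))=0$; since smooth principal $\GL_r({\Bbb C})$-bundles over $S^1$ are classified by $\pi_0(\GL_r({\Bbb C}))$ (via the conjugacy class of the monodromy), $P'$ is smoothly trivializable, and hence admits a smooth section $s:S^1\to P'$. Third, I would set
$$
  f\;:=\; \widetilde{\sigma}_{[r]}\,\circ\,\iota\,\circ\, s\;:\; S^1\;\longrightarrow\; \Quot^{\,H^0}\!({\cal O}_C^{\oplus r},r)\,,
$$
and verify
$$
  \rho\circ f\; =\; \rho\circ \widetilde{\sigma}_{[r]}\circ\iota\circ s\; =\; \mbox{\it pr}_{C^{[r]}}\circ\iota\circ s\; =\; \underline{f}\circ \mbox{\it pr}_{S^1}\circ s\; =\; \underline{f}\,,
$$
using the identity above together with $\mbox{\it pr}_{S^1}\circ s=\Id_{S^1}$ and the defining property of the pullback.

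There is essentially no hard step here: the argument is a formal consequence of the triviality of $\GL_r({\Bbb C})$-bundles over $S^1$ combined with the structural results of Sec.~4.1. If one preferred to bypass $\widetilde{\sigma}_{[r]}$ and use the second hint, the main issue would be that $\rho$ is not a fibration, so one cannot directly invoke a homotopy lifting property from path-connectedness of fibers alone; one would instead use that the pullback under $\underline{f}$ of the rank-$r$ vector bundle $\mbox{\it pr}_{1\ast}{\cal O}_{\widetilde{\cal Z}}$ associated to the universal family $\widetilde{\cal Z}\subset C^{[r]}\times C$ is a trivializable complex vector bundle on $S^1$, and that a choice of trivialization produces the required generating $r$-tuple of sections defining the quotient. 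Either route pinpoints the same underlying reason: the only topological obstruction to lifting is controlled by $\pi_0(\GL_r({\Bbb C}))$, which vanishes.
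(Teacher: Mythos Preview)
Your proposal is correct and follows precisely the first of the two routes the paper indicates (``the canonical identification $C^{(r)}=C^{[r]}$''), making explicit the use of $\widetilde{\sigma}_{[r]}$ and the triviality of $\GL_r({\Bbb C})$-bundles over $S^1$ that the paper leaves implicit. Your brief critique of the second hint is also fair: smoothness of the total space together with path-connected fibers does not by itself give a homotopy lifting property, so the first route (or your equivalent reformulation via trivializing the pullback of the rank-$r$ bundle from the universal subscheme) is the cleaner way to turn the paper's remark into an actual proof.
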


\begin{example}
{\bf [Lagrangian morphism].} {\rm
 Any generic
  $f: S^1\rightarrow \Quot^{\,H^0}\!({\cal O}_C^{\oplus r},r)$
  defines a Lagrangian morphism
  $\varphi:(S^{1,A\!z},{\cal E})\rightarrow (C,\omega)$.
}\end{example}

\begin{example}
{\bf [special Lagrangian morphism].} {\rm
 Denote the domain $S^1$ by $X$ and
 equip all $S^1$ in the discussion with an orientation
  via $S^1\simeq {\Bbb R}^1/(2\pi{\Bbb Z})$.
 The Donaldson/Polchinski-Grothendieck picture
   and Aspects~II and IV of morphisms
  can be combined to construct a special Lagrangian morphism $\varphi$,
  as follows.

 Let
  $L$ be a special Lagrangian $1$-cycle on $C$.
 For simplicity of presentation, we assume that $L$ is connected.
 Let
  $S=\amalg_iS^1$ and
  $\mbox{\boldmath $m$}=(m_i)_i$,
   $\mbox{\boldmath $d$}=(d_i)_i$ with $m_i, d_i\in {\Bbb Z}$
   be multiplicity vectors.
 Let $g:S\rightarrow L$ (resp.\ $c:S\rightarrow X$)
  be a covering map with degree specified by
  {\boldmath $m$} (resp.\ {\boldmath $d$}).
 This defines a map $(c,g):S\rightarrow X\times C$
  with degree $r=|\mbox{\boldmath $d$}|:= \sum_id_i$ over $X$
  and, hence, a map $\underline{f}:X\rightarrow C^{(r)}$.
 Any lifting
  $f$ of $\underline{f}$ defines then a special Lagrangian morphism
  $\varphi:(S^{1,A\!z},{\cal E})\rightarrow C$,
    with ${\cal E}$ of (complex) rank $r$,
   whose image is supported on $L$.
 For example, the (trivial) complex line bundle on $S$
  in the Donaldson/Polchinski-Grothendieck picture
  induces a lifting of $\underline{f}$
 while the identification $C^{(r)}=C^{[r]}$ induces another.
 They give rise to different special Lagrangian morphisms.
 (See next theme for further explanation.)

 When $L=\amalg_jL_j$ has more than one connected component,
  the above separate construction for each $L_j$
  can be combined/merged to one construction to give a morphism
  $\varphi$ with image supported on $L$.
}\end{example}

\bigskip

\begin{flushleft}
{\bf The generically filtered structure on the Chan-Patan bundle
     over a special Lagrangian cycle on
     a Calabi-Yau torus.}\footnote{A
                  hidden mild subtitle to this theme is:
                   {\it Donagi-Katz-Sharpe vs.\ Polchinski-Grothendieck}.
                  Readers are highly recommended to read the theme
                   alongside with the work [D-K-S] of
                   Donagi, Katz, and Sharpe
                   from open-string-states point of view
                   for the nilpotent/filtered structure
                   on the Chan-Paton sheaf addressed here.}
\end{flushleft}
{To} single out and manifest better the current theme,
consider the class of morphisms
 $\varphi:(S^{1,A\!z},{\cal E})\rightarrow C$
 such that both $\pi_{\varphi}:S^1_{\varphi}\rightarrow S^1$
  and $S^1_{\varphi}\rightarrow C$ are embeddings.
The corresponding
 $f:S^1\rightarrow \Quot^{\,H^0}\!({\cal O}_C^{\oplus r},r)$
 has the image of $\underline{f}:S^1\rightarrow C^{(r)}$
 contained in the lowest subdiagonal $C\subset C^{(r)}$,
 consisting of points of the form $[p,\,\cdots\,,p]$, $p\in C$.
Recall from Sec.~4.1
 that the subset $\rho^{-1}([p,\,\cdots\,,p])$ in
  $\Quot^{\,H^0}\!({\cal O}_C^{\oplus r},r)$
 consists of a partial-ordered collection of $\GL_r({\Bbb C})$-orbits.
A point in $\rho^{-1}([p,\,\cdots\,,p])$
 represents a punctual subscheme $Z$ of $C$ with
 $Z_{\redscriptsize}=p$ together with an ${\cal O}_Z$-module ${\cal F}$
 with a decoration
  $H^0({\cal O}_C^{\oplus r})={\Bbb C}^r
   \rightarrow H^0({\cal F})\rightarrow 0$.
As a $0$-dimensional scheme,
 $Z\simeq\Spec({\Bbb C}[z]/(z^{r^{\prime}}))$
 for some $r^{\prime}\le r$.
In terms of this expression,
the $z$-action on ${\cal F}$ induces a filtration
 $0\subset z^{r^{\prime}-1}{\cal F}\subset\,\cdots\,\subset
     z^2{\cal F}\subset z{\cal F}\subset {\cal F}$
 of ${\cal F}$.
It follows that the collection
 $\{f^{-1}(O):\, \mbox{$O$ is a $\GL_r({\Bbb C})$-orbit}\}$
 gives a finite decomposition of $S^1$ into a disjoint cyclic union
 $I_1\cup\{p_{12}\}\cup I_2\cup\{p_{23}\}
     \cup\,\cdots\,\cup I_k\cup\{p_{k1}\}$
 with
  $I_i$ an open interval in $S^1$ and
  $p_{i,i+1}=\overline{I_i}\cap \overline{I_{i+1}}$,
  $i=1,\,\ldots\,,k$.
  (By convention, $k+1\equiv 1$.)
{For} each $I_i=f^{-1}(O_i)$ (resp.\ $p_{i,i+1}=f^{-1}(O_{i,i+1})$),
 $(\varphi|_{I_i})_{\ast}({\cal E}|_{I_i})$
 (resp.\ $(\varphi|_{p_{i,i+1}})_{\ast}({\cal E}|_{p_{i,i+1}})$)
 is thus endowed with a filtration specified by $O_i$
 (resp.\ $O_{i,i+1}$).
Since $O_{i,i+1}\subset \overline{O_i}\cap\overline{O_{i+1}}$,
the filtration associated to $O_i$ and that associated to $O_{i+1}$
 can be regarded as refinements of
 the filtration associated to $O_{i,i+1}$.
Since $\rho^{-1}([p,\,\cdots\,,p])$ contains
 a unique maximal orbit,
  associated to the case $Z\simeq\Spec({\Bbb C}[z]/(z^r))$,  and
 a unique minimal orbit, associated to the case $Z\simeq\Spec{\Bbb C}$.
A generic $\varphi$ in the current class of morphisms under discussion
 has $\varphi_{\ast}{\cal E}$ filtered by a complete flag of subbundles.
In contrast, the case that $\varphi_{\ast}{\cal E}$ is not filtered
 is a most non-generic situation.

\begin{remark}{\rm
[{\it hidden scheme structure in symplectic geometry}].
 While the above filtration is very natural from scheme-theoretical
  aspect, in symplectic/calibrated geometry one does not usually
  think of having a scheme structure
  on/along the special Lagrangian cycle $L$.
 Since the filtration is on $\varphi_{\ast}{\cal E}$,
  one can still have the filtration structure
  without resorting to its hidden scheme-like source.
 Symplectic geometers may think of such a filtration
  on bundles/sheaves over Lagrangian/special Lagrangian cycles
  as a (weak) datum to encode details of
  how the latter merge or split under deformations.
}\end{remark}

\bigskip

\subsection{Amalgamation/decomposition (or assembling/disassembling)
    of special Lagrangian cycles with a bundle/sheaf.}

In this subsection\footnote{
                  Though this subsection, as it is, is not yet ready
                   to be subtitled
                    ``Denef vs.\ Polchinski-Grothendieck",
                   it is written with the work [De] of Denef in mind.
                  One cannot help but notice the similarity of
                   the assembling/disassembling behavior of
                   D-branes of A-type discussed
                    there via split attractor flows  and
                    here via deformations of morphisms
                     from Azumaya noncommutatve spaces
                     with a fundamental module.
                  Readers are highly recommended to read ibidem
                   alongside with the current subsection.}
 we discuss the amalgamation/decomposition (or assembling/disassembling)
  of special Lagrangian cycles with a bundle/sheaf on a Calabi-Yau torus
  through deformations of morphisms from an $(S^{1,A\!z},{\cal E})$.

{For} convenience,
fix a basis $H_1(C;{\Bbb Z})\simeq {\Bbb Z}\oplus {\Bbb Z}$
 with intersection form
 {\tiny $\left[\begin{array}{cc} 0 & 1 \\ -1& 0 \end{array}\right]$}.
With respect to this basis,
 the algebraic intersection number
  $\gamma_{(p_1,q_1)}\cdot\gamma_{(p_2,q_2)}$
  of a $(p_1,q_1)$-curve with a $(p_2,q_2)$-curve $\gamma_{(p_2,q_2)}$
  is given by
  {\tiny $\left|\begin{array}{cc}
           p_1 & q_1 \\ p_2 & q_2 \end{array}\right|$} $=p_1q_2-p_2q_1$
 and
a special Lagrangian cycle, oriented as a $1$-cycle, in $(p,q)$-class
 on $(C,\omega)$ is described by a straight/geodesic $(p,q)$-curve,
 which can have several connected components with multiplicities
 if $p$ and $q$ are not co-prime.
The purpose of this subsection is
 to explain the following proposition:

\begin{proposition}
{\bf [amalgamation/decomposition of D-branes of A-type].}
 Let
  $$
   (L_i,{\cal V}_i)\;
   =\; (\varphi_{i\,\ast}[S^1]\,,\,\varphi_{i\,\ast}{\cal E}_i)\,,
   \hspace{1em}i=1,\,\ldots,\,k\,,
  $$
  for some special Lagrangian morphisms
  $\varphi_i:(S^{1,A\!z},{\cal E}_i)\rightarrow C$,
  $i=1,\,\ldots,\,k\,$, respectively.
 Let
  $[(L_i,{\cal V}_i)]=(p_i,q_i)\in H_1(C;{\Bbb Z})$.
 Then there exists a special Lagrangian morphism
  $\varphi:(S^{1,A\!z},{\cal E})\rightarrow C$ with image class
  $[(\varphi_{\ast}[S^1]\,,\,\varphi_{\ast}{\cal E})]$
  in $(\sum_{i=1}^kp_i\,,\,\sum_{i=1}^kq_i)\in H_1(C;{\Bbb Z})$
  such that $\varphi$ can be deformed into a Lagrangian morphism
  $\varphi^{\prime}:(S^{1,A\!z},{\cal E})\rightarrow C$
  with
   image cycle $\varphi^{\prime}_{\ast}[S^1]=\sum_{i=1}^kL_i$ and
   push-forward
    $\varphi^{\prime}_{\ast}{\cal E}=\oplus_{i=1}^k{\cal V}_i$.
\end{proposition}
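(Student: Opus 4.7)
The plan is to build $\varphi'$ and $\varphi$ separately on a common $(S^{1,Az}, \mathcal{E})$ and then connect them through a one-parameter family of Lagrangian morphisms. The key simplification on the real $2$-dimensional torus $C$ is that every smooth $1$-cycle is automatically Lagrangian (any $2$-form restricts trivially to a $1$-manifold), so the Lagrangian requirement of Lemma 4.2.1 reduces to generic immersivity of $\underline{f}$ and of the edges of $\pi^{-1}(\underline{f}(S^1)) \subset C^{\times r}$.

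Set $r := \sum_i r_i$ with $r_i = \mathrm{rank}\,\mathcal{E}_i$ and let $\mathcal{E} := \bigoplus_i \mathcal{E}_i$; since all rank-$r$ complex $C^\infty$-bundles over $S^1$ are trivial, the isomorphism class of $\mathcal{E}$ is fixed throughout. I construct $\varphi'$ as a ``direct sum'' of the $\varphi_i$'s: take $\mathcal{A}_{\varphi'} := \bigoplus_i \mathcal{A}_{\varphi_i}$ as a block-diagonal commutative subalgebra of $\mathrm{End}_{\mathcal{O}_X}(\mathcal{E})$, with surrogate $X_{\varphi'} = \coprod_i X_{\varphi_i}$ and componentwise map $f_{\varphi'}$. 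This yields a morphism with $\varphi'_*[S^1] = \sum_i L_i$ and $\varphi'_*\mathcal{E} = \bigoplus_i \mathcal{V}_i$; each $\varphi_i$ being special Lagrangian renders $\varphi'$ Lagrangian, while the generally unequal calibration angles $\theta_i$ across the distinct classes $(p_i, q_i)$ spoil special Lagrangianness. For $\varphi$, assume $(P,Q) := \sum_i(p_i,q_i) \ne (0,0)$ (the degenerate case is treated by an analogous argument on homologically cancelling pairs). Fix a connected calibrated straight geodesic $\ell \subset C$ of primitive class $(P,Q)/\gcd(P,Q)$ and apply Example 4.2.3 with $S = S^1$, $c: S \to X$ a cover of degree $r$, $g: S \to \ell$ a cover of degree $\gcd(P,Q)$, and $\mathcal{V}$ a trivial rank-$1$ bundle on $S$. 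The resulting $\varphi := \varphi_{(c,g)}$ is special Lagrangian of class $(P,Q)$, with $\mathcal{E} = c_*\mathcal{V}$ of rank $r$ on $S^1$ matching topologically the $\mathcal{E}$ of $\varphi'$.

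The main step is the deformation from $\varphi$ to $\varphi'$. By Aspect IV of Sec.~2.2 together with Lemma 4.2.2, each morphism from $(S^{1,Az}, \mathcal{E})$ to $C$ corresponds, after trivializing the principal $GL_r(\mathbb{C})$-bundle on $S^1$, to a smooth map $f: S^1 \to \mathrm{Quot}^{H^0}(\mathcal{O}_C^{\oplus r}, r)$, with push-down $\underline{f} := \rho \circ f: S^1 \to C^{(r)}$ recovering the underlying cycle. Since $C^{(r)}$ is a $\mathbb{P}^{r-1}$-bundle over the elliptic curve $C$ via the Abel--Jacobi map, $\pi_1(C^{(r)}) = \pi_1(C) = \mathbb{Z}^2$, and both $\underline{f}_\varphi$ and $\underline{f}_{\varphi'}$ represent the free homotopy class $(P,Q)$. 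I connect them by a smooth homotopy $\underline{f}^t$, perturbed generically to satisfy the immersivity criteria of Lemma 4.2.1, and lift through the smooth Hilbert--Chow morphism $\rho$ (Lemma 4.1.1) to a continuous family $f^t: S^1 \to \mathrm{Quot}^{H^0}(\mathcal{O}_C^{\oplus r}, r)$. This yields a family $\varphi^t: (S^{1,Az}, \mathcal{E}) \to C$ of Lagrangian morphisms with $\varphi^0 = \varphi$, and $\varphi^1$ providing the target morphism.

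The hard part will be pinning down the endpoint $\varphi^1 = \varphi'$ \emph{on the nose} rather than merely up to homotopy, so that the Chan--Paton specialization at $t = 1$ terminates at precisely $\bigoplus_i \mathcal{V}_i$ on $\sum_i L_i$. This requires selecting the lift $f^t$ within the correct $GL_r(\mathbb{C})$-orbit stratum of $\mathrm{Quot}^{H^0}(\mathcal{O}_C^{\oplus r}, r)$ over each point of $\underline{f}^t(S^1) \subset C^{(r)}$; the orbit-closure partial order of Sec.~4.1, together with the generically filtered Chan--Paton structure analyzed in Sec.~4.2, will ensure that every desired stratum configuration is accessible within one path-component of morphisms, so $f^t$ can be adjusted near $t = 1$ to terminate precisely at $\varphi'$.
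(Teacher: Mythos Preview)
Your amalgamation step (building $\varphi'$ as a block-diagonal direct sum) is exactly the paper's Lemma~4.3.2, proved there via the canonical map $\Quot^{H^0}({\cal O}_C^{\oplus r_1},r_1)\times\Quot^{H^0}({\cal O}_C^{\oplus r_2},r_2)\to\Quot^{H^0}({\cal O}_C^{\oplus(r_1+r_2)},r_1+r_2)$. The deformation step, however, is organized differently.

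The paper runs the argument in the opposite direction: starting from the amalgamated $\varphi'$, it proves (Lemma~4.3.3) that \emph{any} morphism deforms to a special Lagrangian one, and lets $\varphi$ be whatever comes out. The proof is a cobordism argument in the $4$-manifold $M=[0,1]\times(X\times C)$: the surrogate $X_{\varphi'}\subset\{0\}\times(X\times C)$ and a straight-line representative $L_{(r;p,q)}\subset\{1\}\times(X\times C)$ bound a smooth embedded orientable surface $\Sigma$ in $(M,\partial M)$ (from $H_2(M,\partial M)\simeq\Ker\alpha_1$), which after adjustment has $\pi:\Sigma\to[0,1]$ Morse with only index-$1$ critical points (using $\pi_2(X\times C)=0$). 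Slicing $\Sigma$ gives the homotopy $F:[0,1]\times S^1\to\Quot^{H^0}({\cal O}_C^{\oplus r},r)$; the index-$1$ critical points are the ``basic moves'' that merge or split branches of the surrogate.

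Your route instead fixes both endpoints in advance, homotopes $\underline f$ in $C^{(r)}$ via $\pi_1(C^{(r)})\simeq H_1(C)$, and lifts through $\rho$. This is legitimate (the lift can be produced from the $C^{[r]}$-section $\sigma_{[r]}$ and triviality of principal $\GL_r({\Bbb C})$-bundles over $[0,1]\times S^1$), but it creates the endpoint problem you flag as ``the hard part'': the lift you obtain at $t=1$ sits in the maximal-orbit stratum, not necessarily at the specific $f_{\varphi'}$. The paper's ordering sidesteps this entirely, since $\varphi$ is \emph{defined} as $F|_{\{1\}\times S^1}$ rather than prescribed beforehand. Your approach buys a cleaner homotopy-theoretic picture (only $\pi_1(C^{(r)})$ is needed), while the paper's surface/Morse argument is more geometric and makes the short-vs.-long string-wrapping transitions explicit.
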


This a consequence of the following two lemmas:

\begin{lemma}
{\bf [amalgamation of morphisms].}
 Let $\varphi_i:(S^{1,A\!z},{\cal E}_i)\rightarrow C$, $i=1,\,2$,
  be two morphisms.
 Then there exists a morphism
  $\varphi_3:(S^{1,A\!z},{\cal E}_1\oplus{\cal E}_2)\rightarrow C$
  such that
  \begin{itemize}
   \item[$\cdot$]
    $\Image\varphi_3=\Image\varphi_1+\Image\varphi_2$
    as cycles on $C$;

   \item[$\cdot$]
   $\varphi_{\ast}{\cal E}_3
    = \varphi_{1\ast}{\cal E}_1 \oplus \varphi_{2\ast}{\cal E}_2$
   as torsion sheaves of ${\cal O}_C$-modules on $C$.
  \end{itemize}
\end{lemma}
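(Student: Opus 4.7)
The plan is to use Aspect~II of morphisms (Sec.~2.2 and Definition~3.1.3) as the cleanest framework: present each input morphism as a torsion sheaf on $S^1\times C$ and then take the direct sum.

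First I would take Aspect~II presentations $\widetilde{\cal E}_i$ on $S^1\times C$ for $\varphi_i$, $i=1,2$, so that $\widetilde{\cal E}_i$ is a torsion sheaf of ${\cal O}_{S^1}\otimes_{\Bbb R}{\cal O}_C$-modules with $(\Supp\widetilde{\cal E}_i)_{\redtiny}$ stratified by smooth manifolds, with $\widetilde{\cal E}_i|_{\{p\}\times C}$ of length $r_i=\rank{\cal E}_i$ at each $p\in S^1$, and with the local family-in-$\Quot$ condition satisfied. Then I would define
\[
 \widetilde{\cal E}_3\; :=\; \widetilde{\cal E}_1\,\oplus\,\widetilde{\cal E}_2
\]
as an ${\cal O}_{S^1}\otimes_{\Bbb R}{\cal O}_C$-module on $S^1\times C$ and check it satisfies the three conditions of Definition~3.1.3: Condition~(1) follows because direct sum preserves relative length $r_1+r_2$; Condition~(2) follows because $(\Supp\widetilde{\cal E}_3)_{\redtiny}=(\Supp\widetilde{\cal E}_1)_{\redtiny}\cup(\Supp\widetilde{\cal E}_2)_{\redtiny}$ is a union of manifold-stratified subsets, hence still manifold-stratified (possibly after refining the stratification along the intersection locus); and Condition~(3) follows from the natural direct-sum morphism
\[
 \Quot^{\,H^0}\!({\cal O}_C^{\oplus r_1},r_1)
  \times \Quot^{\,H^0}\!({\cal O}_C^{\oplus r_2},r_2)\;
 \longrightarrow\; \Quot^{\,H^0}\!({\cal O}_C^{\oplus (r_1+r_2)},r_1+r_2)
\]
combined with the local parameterizing maps of $\widetilde{\cal E}_1$ and $\widetilde{\cal E}_2$.

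Next I would recover Aspect~I from $\widetilde{\cal E}_3$ as in Remark~3.1.4: the pushforward $\pr_{1\ast}\widetilde{\cal E}_3=\pr_{1\ast}\widetilde{\cal E}_1\oplus\pr_{1\ast}\widetilde{\cal E}_2={\cal E}_1\oplus{\cal E}_2$ recovers the fundamental module, identifying the Azumaya structure ${\cal O}^{\infty,A\!z}_{S^1}=\Endsheaf({\cal E}_1\oplus{\cal E}_2)$ as the required one, and the pullback $\pr_2^{\sharp}:{\cal O}_C\rightarrow\Endsheaf(\widetilde{\cal E}_3)$ factoring through $\Endsheaf({\cal E}_1\oplus{\cal E}_2)$ gives the morphism $\varphi_3^{\sharp}$. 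The two bulleted properties then read off immediately: on the level of underlying point-sets, $(\Supp\widetilde{\cal E}_3)_{\redtiny}=(\Supp\widetilde{\cal E}_1)_{\redtiny}\cup(\Supp\widetilde{\cal E}_2)_{\redtiny}$, and after applying $\pr_{2\ast}$ together with the multiplicity bookkeeping of Definition~3.2.3 this gives $\Image\varphi_3=\Image\varphi_1+\Image\varphi_2$ as cycles; similarly, $\pr_{2\ast}\widetilde{\cal E}_3=\pr_{2\ast}\widetilde{\cal E}_1\oplus\pr_{2\ast}\widetilde{\cal E}_2=\varphi_{1\ast}{\cal E}_1\oplus\varphi_{2\ast}{\cal E}_2$ as torsion ${\cal O}_C$-modules.

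The main obstacle, such as it is, lies in Condition~(2) of Definition~3.1.3 when $(\Supp\widetilde{\cal E}_1)_{\redtiny}$ and $(\Supp\widetilde{\cal E}_2)_{\redtiny}$ intersect: naively the union may fail to be smoothly stratified at intersection loci. However, since both supports are images of closed subsets of $S^1\times C$ with $S^1$-direction projection a covering-type map (Lemma~3.2.6 in the Donaldson-picture construction), their intersection is a finite set of points on $S^1\times C$; refining each stratification by cutting at these finitely many intersection points produces a manifold stratification of the union, which is what is required. The remaining verification that $\widetilde{\cal E}_3$ satisfies the local-$\Quot$-factoring Condition~(3) reduces, via the block-diagonal direct-sum embedding above, to the same property for each $\widetilde{\cal E}_i$, so no new analytic input is needed.
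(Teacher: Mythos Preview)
Your proof is correct and follows essentially the same idea as the paper: the construction is the direct sum, and the key step is precisely the direct-sum morphism
\[
  \Quot^{\,H^0}\!({\cal O}_C^{\oplus r_1},r_1)\times\Quot^{\,H^0}\!({\cal O}_C^{\oplus r_2},r_2)\;\longrightarrow\;\Quot^{\,H^0}\!({\cal O}_C^{\oplus (r_1+r_2)},r_1+r_2)
\]
that you invoke for Condition~(3). The paper simply states this morphism and stops, working directly in Aspect~IV; you unpack the same construction through Aspect~II and then verify the Aspect~II conditions one by one, which is a more detailed but equivalent route.

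One small inaccuracy worth flagging: in your ``main obstacle'' paragraph you assert that the two supports intersect in finitely many points, but this need not hold (take $\varphi_1=\varphi_2$, or more generally morphisms whose surrogates share a common arc in $S^1\times C$). This does not damage the argument, since your earlier remark---refining both stratifications along the intersection locus---already handles the general case; the union of two manifold-stratified closed subsets of $S^1\times C$ is again manifold-stratified after common refinement, regardless of the dimension of the overlap. Also, your citation ``Lemma~3.2.6'' should be Lemma~3.2.5.
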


\begin{proof}
 This follows from the following canonical morphism
 induced by taking the direct sum of two complexes:
 $$
  \begin{array}{cccl}
   \Quot^{\,H^0}\!({\cal O}_C^{\oplus r_1},r_1)
    \times \Quot^{\,H^0}\!({\cal O}_C^{\oplus r_2},r_2)
    & \longrightarrow
    & \Quot^{\,H^0}\!({\cal O}_C^{\oplus (r_1+r_2)},r_1+r_2) \\[.6ex]
   \left(\,
    [{\cal O}_C^{\oplus r_1}
          \rightarrow \widetilde{\cal E}_1\rightarrow 0]\,,\,
    [{\cal O}_C^{\oplus r_2}
          \rightarrow \widetilde{\cal E}_2\rightarrow 0]\, \right)
    & \longmapsto
    & [{\cal O}_C^{\oplus (r_1+r_2)} \rightarrow
        \widetilde{\cal E}_1\oplus \widetilde{\cal E}_2\rightarrow 0)]
    &.
  \end{array}
 $$
\end{proof}

\begin{lemma}
{\bf [special Lagrangian representative in deformation class].}
 Any morphism $\varphi:(S^{1,A\!z},{\cal E})\rightarrow C$
  can be deformed into a special Lagrangian morphism
  $\varphi_{sL}:(S^{1,A\!z},{\cal E})\rightarrow C$.\footnote{For
                the uniformization of the statement, here we allow
                $\varphi_{sL}$ to have {\it Im}$\,\varphi_{sL}$
                supported on
                a (possibly empty) special Lagrangian cycle
                plus possibly $0$-cycles on $C$.}
\end{lemma}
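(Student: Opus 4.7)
The plan is to reduce the statement to an assertion about free homotopy classes of maps from $S^1$ into the representation-theoretical atlas $\Quot^{\,H^0}\!({\cal O}_C^{\oplus r},r)$, and then realize a concrete special Lagrangian representative in each class via the Donaldson/Polchinski--Grothendieck construction of Sec.~3.2.

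\emph{Step 1 (setup and topological invariant).} Using Aspect~IV, trivializing a principal $\GL_r({\Bbb C})$-bundle over $S^1$ presents $\varphi$ as a smooth map $f:S^1\rightarrow \Quot^{\,H^0}\!({\cal O}_C^{\oplus r},r)$, with projection $\underline{f}:=\rho\circ f:S^1\rightarrow C^{(r)}$. For an elliptic curve $C$, the Abel--Jacobi sum map $s:C^{(r)}\rightarrow C$ is a ${\Bbb P}^{r-1}$-bundle, so $\pi_1(C^{(r)})\simeq\pi_1(C)={\Bbb Z}\oplus{\Bbb Z}$. Let $(p,q):=[s\circ\underline{f}]\in\pi_1(C)$; a direct length-weighted computation identifies $(p,q)$ with the $H_1(C;{\Bbb Z})$-class of the cycle $\varphi_\ast[(S^1,{\cal E})]$.

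\emph{Step 2 (a special Lagrangian model of class $(p,q)$).} If $(p,q)=(0,0)$, take $\varphi_{sL}$ to be a constant morphism at some $c_0\in C$; its image is a $0$-cycle, hence admissible by the footnote. If $(p,q)\ne(0,0)$, write $d=\gcd(p,q)$ and $(p',q')=(p/d,q/d)$. The straight geodesic $L\subset C$ of slope $(p',q')$ is a special Lagrangian $1$-cycle calibrated by $\Omega_\theta$ with $\theta=-\Arg(p'+q'\tau)$. Following the setup preceding Lemma~3.2.5, choose a disjoint union $S$ of circles with a covering $c:S\rightarrow S^1$ together with a map $g:S\rightarrow L\subset C$ whose component degrees are adjusted (padding with constant components mapping to $0$-cycles if necessary) so that $c_\ast{\cal V}$ has rank $r$ and $g_\ast[S]$ realizes the class $(p,q)$. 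By Corollary~3.2.7, the induced $\varphi_{(c,g)}:(S^{1,A\!z},c_\ast{\cal V})\rightarrow C$ is a special Lagrangian morphism; set $\varphi_{sL}:=\varphi_{(c,g)}$ and identify $c_\ast{\cal V}\simeq {\cal E}$ as smooth rank-$r$ complex bundles on $S^1$ (all are trivial).

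\emph{Step 3 (joining by a smooth homotopy).} Let $f_{sL}:S^1\rightarrow \Quot^{\,H^0}\!({\cal O}_C^{\oplus r},r)$ be the Aspect~IV presentation of $\varphi_{sL}$. The central claim is that $f$ and $f_{sL}$ lie in the same free-homotopy class. Granting this, any smooth homotopy $F:S^1\times[0,1]\rightarrow \Quot^{\,H^0}\!({\cal O}_C^{\oplus r},r)$ with $F(\cdot,0)=f$ and $F(\cdot,1)=f_{sL}$ (smoothings of continuous homotopies exist since the target is smooth by Lemma~4.1.1) gives a smooth $1$-parameter family $\varphi_t$ of morphisms, which is the desired deformation of $\varphi$ into $\varphi_{sL}$. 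To establish the homotopy, I argue that $\pi_1(\Quot^{\,H^0}\!({\cal O}_C^{\oplus r},r))\simeq\pi_1(C^{(r)})={\Bbb Z}^2$; since both $f$ and $f_{sL}$ produce the same class $(p,q)\in{\Bbb Z}^2$ under $s\circ\rho$, they are freely homotopic (conjugation acts trivially because $\pi_1$ is abelian).

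\emph{Main obstacle.} The hardest technical point is the fundamental-group computation for $\Quot^{\,H^0}\!({\cal O}_C^{\oplus r},r)$. Over the open locus of $C^{(r)}$ parametrizing $r$ distinct points, $\rho$ restricts to a fiber bundle whose fiber is isomorphic to $({\Bbb P}^{r-1})^r$ and is simply connected, and one expects degenerations into the lower-stratum $\GL_r({\Bbb C})$-orbits of Proposition~4.1.8 only to contract cells (generalized flag-type orbits are simply connected) rather than introduce new loops; combined with the ${\Bbb P}^{r-1}$-bundle structure of $s:C^{(r)}\rightarrow C$, this gives $\pi_1=\pi_1(C)={\Bbb Z}^2$. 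Making the stratum-by-stratum argument precise, via a Leray-type computation over the subdiagonal stratification of $C^{(r)}$ and Proposition~4.1.8, is the main piece of technical work required to close the proof.
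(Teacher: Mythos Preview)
Your route is genuinely different from the paper's, and the gap you flag is real. Two corrections sharpen it. First, the generic fiber of $\rho$ is \emph{not} $(\mathbb{P}^{r-1})^r$: the $H^0$-surjectivity condition forces the $r$ hyperplanes to lie in general position, so the fiber is the open subset $GL_r(\mathbb{C})/T$. It is still simply connected, but the distinction matters for any stratum-by-stratum argument. Second, $\rho$ is \emph{not} a submersion, since the model map $\chi:M_r(\mathbb{C})\to\mathbb{C}^r$, $m\mapsto\det(\lambda-m)$, fails to be submersive at non-regular matrices; hence Ehresmann does not apply and the special fibers (closures of regular conjugacy classes) must be analyzed directly. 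This is feasible---the nilpotent cone is literally a cone, hence contractible---but it is substantive work, and your Leray/stratification sketch does not yet supply it.

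The paper avoids computing $\pi_1$ of the Quot scheme altogether. It first homotopes $f$ so that $\underline{f}$ misses the orbifold locus of $C^{(r)}$, making the surrogate $X_\varphi\subset S^1\times C$ a smooth curve in a class $(r;p,q)\in H_1(S^1\times C;\mathbb{Z})$. Then, working in the $4$-manifold $M=[0,1]\times S^1\times C$, a relative-homology long exact sequence shows $L_{(r;p,q)}-X_\varphi$ bounds an embedded oriented surface $\Sigma$; since $\pi_2(S^1\times C)=0$, one can arrange the projection $\Sigma\to[0,1]$ to be Morse with only index-$1$ critical points. The surface $\Sigma$ then directly \emph{is} the homotopy of surrogates, which lifts to $F$ in $\Quot$. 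What your approach would buy, if the $\pi_1$ computation were completed, is a clean invariant characterization of deformation classes; what the paper's approach buys is an explicit cobordism that makes the index-$1$ ``short-to-long string'' moves (Figure~4-3-1) visible and sidesteps the global topology of $\Quot$ entirely.
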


\begin{proof}
 Suppose that ${\cal E}$ is of (complex) rank $r$.
 Let $f:S^1\rightarrow \Quot^{\,H^0}\!({\cal O}_C^{\oplus r},r)$
  be the map associated to $\varphi$.
 Recall
  $\rho:\Quot^{\,H^0}\!({\cal O}_C^{\oplus r},r)\rightarrow C^{(r)}$.
 For the convenience of presentation,
  let $X=S^1$ and
  endow the smooth manifold $C^{(r)}$
   with the natural orbifold structure
   from $C^{(r)}=C^{\times r}/\Sym(r)$,
   where the permutation group $\Sym(r)$ acts on $C^{(r)}$
   by permuting the $r$-many components in the product.
 The union of all subdiagonals in $C^{(r)}$ corresponds to
   the set of orbifold-points on $C^{(r)}$.
 Homotope $f$ so that the corresponding
  $\underline{f}:S^1\rightarrow C^{(r)}$
  has image containing no orbifold-points of $C^{(r)}$.
 Denote the new map/morphism still by $f$ and $\varphi$.
 Then the surrogate $X_{\varphi}$ of $\varphi$
    is a smooth curve (with possibly several connected components)
    that covers $X$ with degree $r$.
 Let
  $H_1(X\times C;{\Bbb Z})
               \simeq {\Bbb Z}\oplus({\Bbb Z}\oplus{\Bbb Z})$
  be induced from the product and
  $H_1(C;{\Bbb Z})\simeq {\Bbb Z}\oplus {\Bbb Z}$
 we fixed at the beginning.
 Then $X_{\varphi}$ is a smooth curve representing a class
  $(r;p,q)\in H_1(X\times C;{\Bbb Z})$.

 Let $L_{(r;p,q)}$ be another smooth curve representative
  of $(r;p,q)$ that maps to a special Lagrangian representative
  of $(p,q)\in H_1(C)$.
 Consider
  the $4$-manifold $M:=[0,1]\times (X\times C)$
   with boundary $\{1\}\times (X\times C)-\{0\}\times (X\times C)$ and
  smooth curves $X_{\varphi}\subset \{0\}\times (X\times C)$
                and $L_{(r;p,g)}\subset \{1\}\times (X\times C)$.
 Then from the long-exact sequence of homologies
  (with ${\Bbb Z}$-coefficient)
  $$
   \cdots\; \longrightarrow\; H_2(\partial M)\;
   \stackrel{\alpha_2}{\longrightarrow}\; H_2(M)\;
   \stackrel{\beta_2}{\longrightarrow}\;  H_2(M,\partial M)\;
   \stackrel{\delta_2}{\longrightarrow}\; H_1(\partial M)\;
   \stackrel{\alpha_1}\longrightarrow\;   H_1(M)\;
   \longrightarrow\; \cdots
  $$
  with $\alpha_1$ and $\alpha_2$ surjective and, hence,
   $\beta_2$ a zero-map and $\delta_2$ injective.
 If follows that
  $$
   H_2(M,\partial M)\;\simeq\; \Ker\alpha_1\;
   \simeq\; \{(\gamma, -\gamma)\,:\, \gamma\in H_1(X\times C) \}\,
   \subset\, H_1((X\times C)\mbox{$\amalg$}(X\times C))\,.
  $$
 Consequently, $(-X_{\varphi}, L_{(r;p,q)})$ bounds a $2$-chain
  $\Sigma$ in $(M,\partial M)$
  with $\partial\Sigma=L_{(r;p,q)}-X_{\varphi}$.
 Furthermore, one can choose $\Sigma$ to be an embedded
  smooth, orientable surface with boundary.\footnote{This
                      follows from the proof of a classical theorem
                       in smooth $4$-manifold topology which,
                       in our case, says that
                      {\it for $M$ a smooth orientable $4$-manifold
                           with boubdary,
                       any class in $H_2(M,\partial M; {\Bbb Z})$
                       can be represented by a smooth embedded
                       orientable surface with boundary}.
                      See [Ki: II.1, Theorem~1.1 and remark] and
                       [G-St: Chap.~1,
                               Proposition~1.2.3 and Remark~1.2.4;
                              Chap.~4, Exercise~4.5.12(b)].
                      C.-H.L.\ would like to thank
                       Robert Gompf
                        for teaching him non-gauge-theory-type
                        4-manifold theory around 1998.}
 Since $\pi_2(X\times C)=0$,
 with a further deformation of the embedding of $\Sigma$
   in $[0,1]\times (X\times C)$ if necessary,
 one can assume that the map: $\pi:\Sigma\rightarrow [0,1]$ from
  the restriction of projection map is a Morse function on $\Sigma$
  with the index of any critical point of $\pi$, if exists,
  equal to $1$ only.
 It follows that $\Sigma$ defines a homotopy
 $F:[0,1]\times S^1
    \rightarrow \Quot^{\,H^0}\!({\cal O}_C^{\oplus r},r)$
  such that
   \begin{itemize}
    \item[(1)]
     $F|_{{\{0\}}\times S^1}=f$,

    \item[(2)]
     the image $\Image\underline{F}$ of $\underline{F}=\rho\circ F$
     in $C^{(r)}$ contains no other orbifold-points except possibly
     finitely many orbifold-points with the structure group
     ${\Bbb Z}/2$,

    \item[(3)]
     $f_1:= F|_{\{1\}\times S^1}$
     defines a special Lagrangian morphism $\varphi_{sL}$.
   \end{itemize}
 This proves the lemma.
 Cf.~{\sc Figure}~4-3-1.
 \begin{figure}[htbp]
  \epsfig{figure=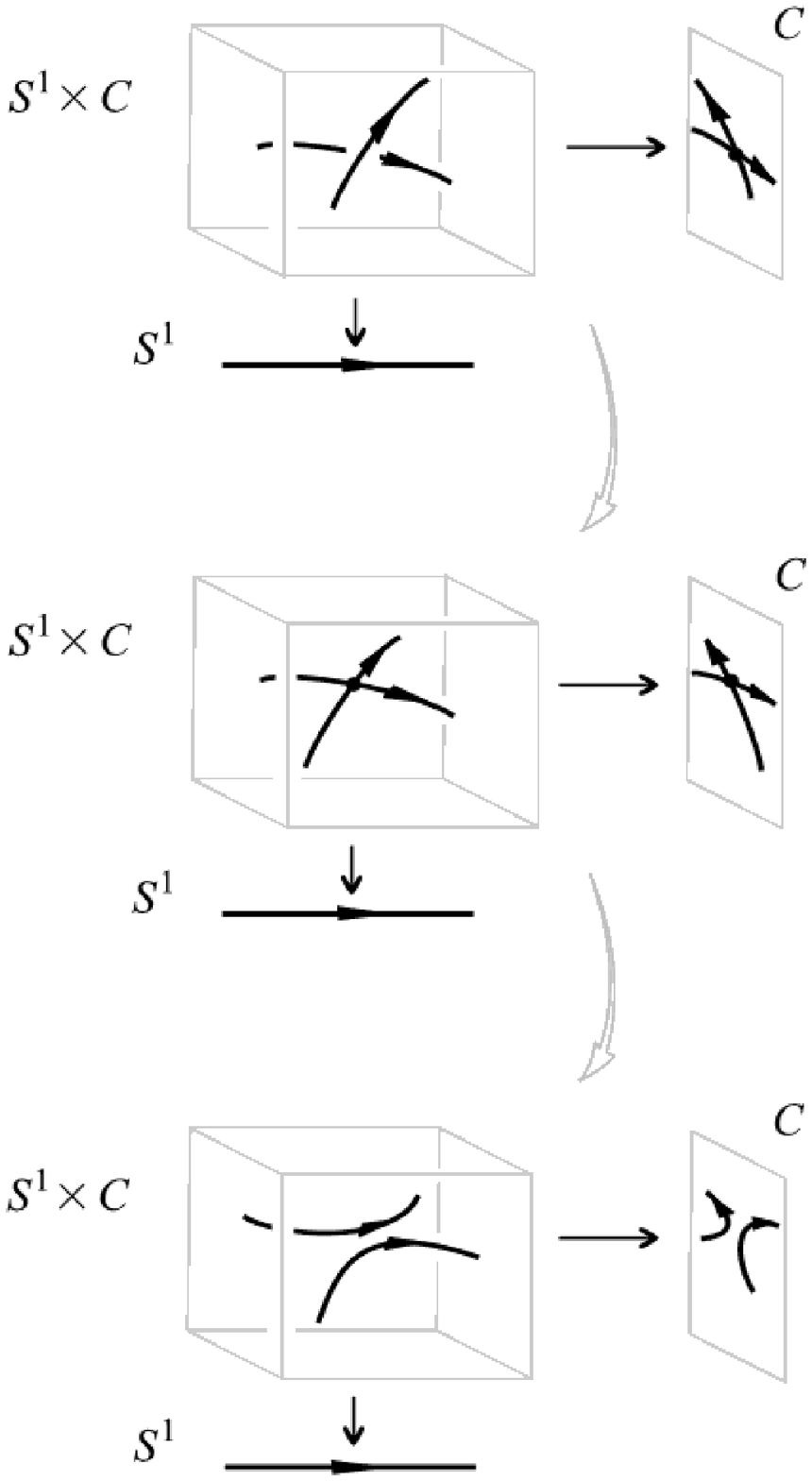,width=16cm}
  \centerline{\parbox{13cm}{\small\baselineskip 12pt
   {\sc Figure} 4-3-1.
   The basic local move/deformation-of-morphism that
    corresponds to crossing an index-$1$ critical point
    of $\pi:\Sigma\rightarrow [0,1]$.
   This has an effect of turning a short-string wrapping
    to a longer-string wrapping or a long string wrapping
    to a shorter string wrapping.
   Here, Aspect~II of a morphism is used.
   }}
 \end{figure}

\end{proof}

\begin{example}
{\bf [brane-anti-brane cancellation].} {\rm
 In particular, the situation of amalgamating special Lagrangian
  morphisms $\varphi_1$ and $\varphi_2$ with image class
  $(p,q),\,(-p,-q)\in H_1(C)$
 corresponds to a brane-anti-brane cancellation.
 Cf.~{\sc Figure}~4-3-2.
 \begin{figure}[htbp]
  \epsfig{figure=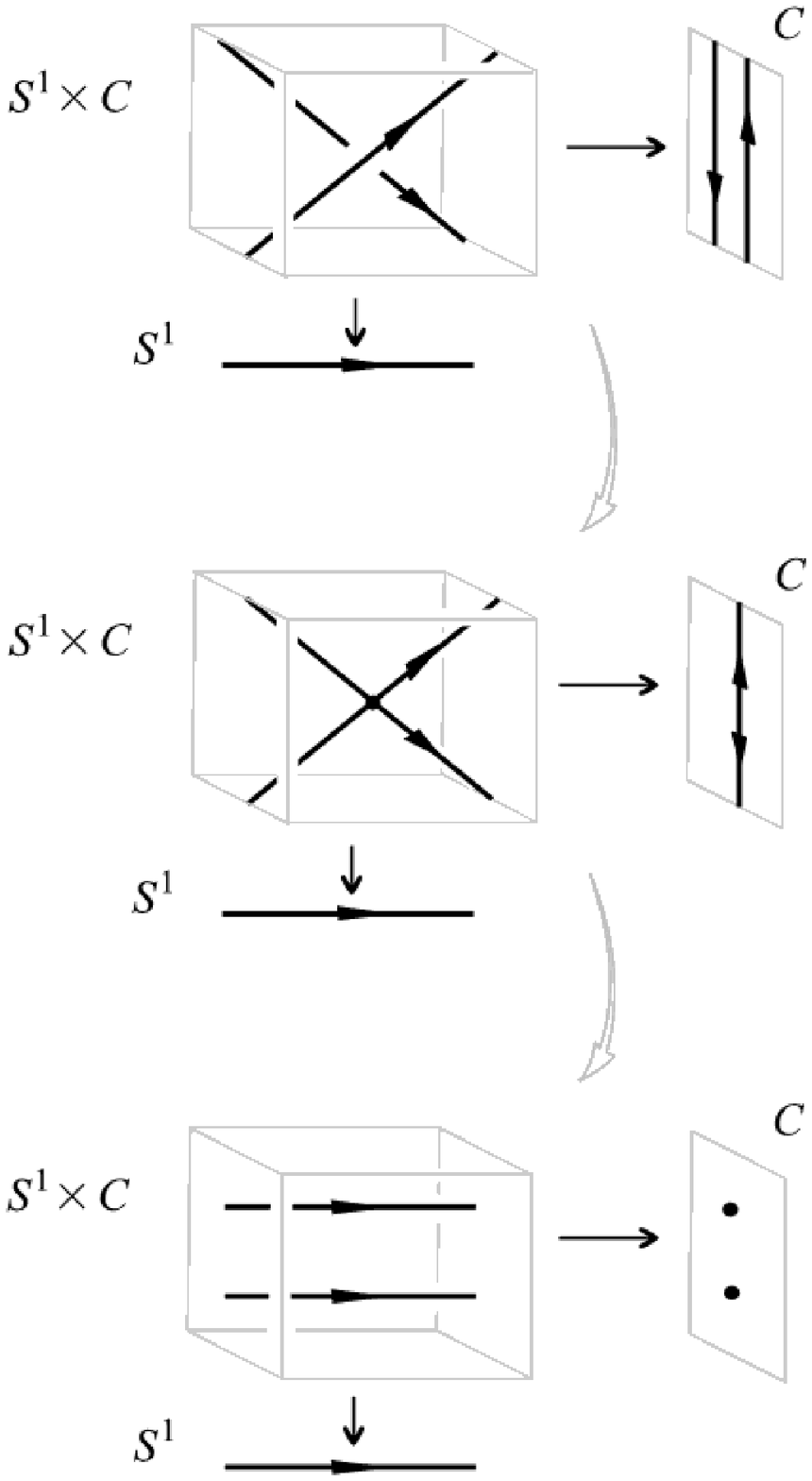,width=16cm}
  \centerline{\parbox{13cm}{\small\baselineskip 12pt
   {\sc Figure} 4-3-2.
   The brane-anti-brane cancellation procedure corresponds to
    a deformation of a morphism to one with $0$-dimensional image.
    In this figure, each opposite pair of faces of a parallelepiped
    are identified.
   Here, Aspect~II of a morphism is used.
   }}
 \end{figure}
}\end{example}

\begin{remark}{\rm
[{\it varying the moduli of the Calabi-Yau torus}].
It should be noted that
 the proof of Lemma~4.3.3 is essentially topological,
   % Lemma [special Lagrangian representative in deformation class]
 depending only on the homology class of the special Lagrangian cycles.
Consequently,
 the mechanism of amalgamation/decomposition of
  special Lagrangian cycles with a bundle/sheaf in a Calabi-Yau torus
  through deformations of morphisms from an $(S^{1,A\!z},{\cal E})$
  as discussed
 works completely the same way
 even if the modulus of the Calabi-Yau torus varies
  in this process of assembling/disassembling of branes thereupon.
}\end{remark}

\bigskip
\bigskip

\begin{flushleft}
{\large\bf String-theoretical remarks for Sec.~4.}
\end{flushleft}
(1) [{\it The leftover/residual of brane-anti-brane cancellation}$\,$]

\medskip

\noindent
In morphism-from-Azumaya-space picture of
 the brane-anti-brane cancellation, all the process is simply
 a deformation of a nonconstant-type morphism to a constant-type
 morphism.
As such, after the brane cancellation process, there is no more
 local Ramond-Ramond charge of the original branes and yet
 there is still something left over, namely
the $0$-dimensional image-cycle/push-forward of the
 Azumaya space with a fundamental module under the final constant-type
 morphism.
This could represent an energy-lump that remains
 from the brane-anti-brane cancellation.

\bigskip

\noindent
(2) [{\it Short vs.\ long string wrapping}$\,$]

\medskip

\noindent
The short vs.\ long string wrapping behavior of
 matrix-strings in the string-theory literature
 (e.g., [D-V-V], [Joh: Sec.~16.3.3]; also[Ma-S])
 can be produced in this context by the same manner
 via morphisms from $S^{1,A\!z}$ and their deformations
 as well.
Cf.~{\sc Figure}~4-3-3.
 \begin{figure}[htbp]
  \epsfig{figure=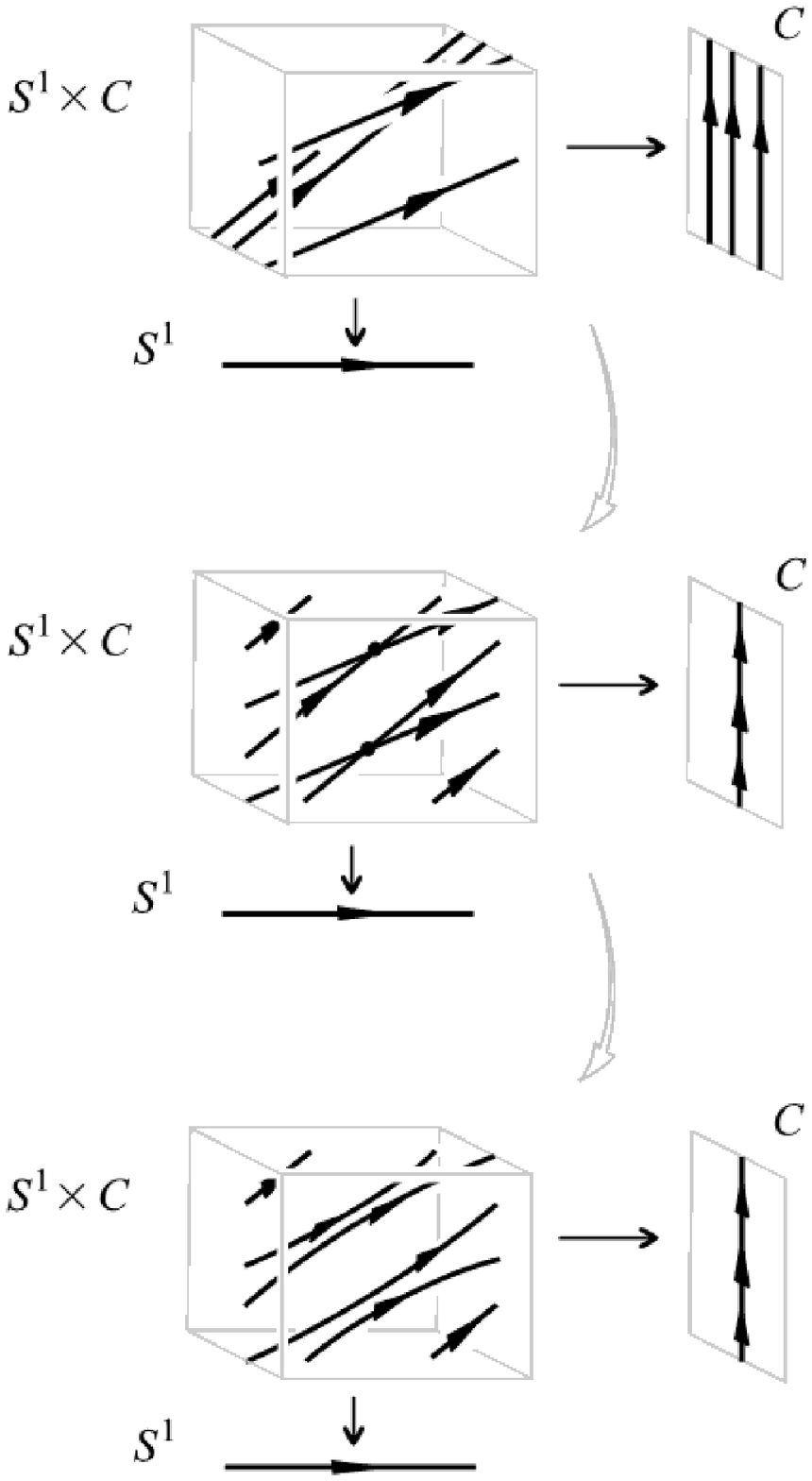,width=16cm}
  \centerline{\parbox{13cm}{\small\baselineskip 12pt
   {\sc Figure} 4-3-3.
   Under deformations of morphisms from an Azumaya string,
    the surrogate associated to the morphisms can change
    from a collection of short strings to a single long string.
   Here, Aspect~II of a morphism is used and
    a short-to-long string-wrapping transition
     corresponding to a merging
     $(1;1,0)+(1;1,0)+(2;1,0)\rightarrow (4;3,0)$
     in $H_1(S^1\times C;{\Bbb C})$
     of the surrogates associated to morphisms involved
    is illustrated.
   The multiplicity of an image cycle in $C$ in terms of
    the associated primitive one is indicated by the number
    of arrowhead.
   }}
 \end{figure}

\bigskip

\noindent
(3) [{\it Azumaya geometry and tensionless string}$\,$]

\medskip

\noindent
In type IIA superstring model of the superstring theory,
an open D$2$-brane can have its boundary attached to NS$5$-branes.
When a pair of parallel NS$5$-branes become coincident,
 the open D$2$-brane sandwiched between them
  becomes degenerate and $1$-dimensional:
  tensionless string\footnote{We
           thank Frederik Denef for a discussion
           on smeared D-branes and degenerate D-branes.}.
As it comes from the boundary of D$2$-branes,
 one anticipates that the Azumaya noncommutative structure
 on the original D$2$-brane passes to the tensionless string
 in the NS$5$-brane.
Morphisms from an Azumaya $S^1$ to the target NS$5$-brane
 become the most basic fields on the tensionless string.
In this way, a tensionless string is linked
 with a matrix/Azumaya string in a natural way.
We thus leave this section and, hence, this review/work
 with the following guiding question:
 \begin{itemize}
  \item[$\cdot$] {\bf Q.}\
  \parbox[t]{13cm}{\it
   {\bf [Azumaya geometry and tensionless string]}$\;$
   How do Azumaya geometry and tensionless string theory relate?
   Will it shed some light to or provide a mathematical
    language/foundation for the theory of tensionless strings?}
 \end{itemize}

\vspace{6em}

Behind the writing of this unexpected review/work,
 it is a wish to transfer the following sense to readers:
 \begin{itemize}
  \item[$\cdot$]
   {\bf [unity in geometry vs.\ unity in string theory]}\\[1.6ex]
   \framebox[18.6em][c]{\parbox{17.6em}{\it
    the master nature of morphisms from\\ Azumaya-type
    noncommutative spaces\\ with a fundamental module in geometry}}
    \hspace{1em} in parallel to \hspace{1em}
   \framebox[9em][c]{\parbox{8em}{\it
    the master nature of D-branes in \\superstring theory}}
  \end{itemize}

\bigskip

\noindent
This would be highly surprising/un-anticipated/unthinkable
  on the mathematics side
 if not because of the Polchinski-Grothendieck Ansatz,
 which realizes morphisms from Azumaya-type manifolds/schemes/stacks
  with a fundamental module
  as the {\it lowest level} presentation of D-branes,
 and superstring theory dictates the master nature of such an object.
We hope this brief review helps give
 both mathematicians and string-theorists working on D-branes
 a sense of
 {\it richness hidden in the Azumaya-type noncommutative geometry}
  or, even better,
 a {\it new perspective} of what they have been or are doing.
There are many themes along the line of
 the Polchinski-Grothendieck Ansatz yet to be studied/generalized.

\newpage
\baselineskip 13pt
%references
{\footnotesize

}%endfootnotesize

\end{document}